\title[Asymptotic partial and complete phase-locking]{Asymptotic phase-locking dynamics and critical coupling strength for the Kuramoto model}
\author[Ha]{Seung-Yeal Ha}
\address[Seung-Yeal Ha]{\newline Department of Mathematical Sciences and Research Institute of Mathematics \newline Seoul National University, Seoul 08826, and \newline
Korea Institute for Advanced Study, Hoegiro 87, Seoul, 02455, Republic of Korea}
\email{syha@snu.ac.kr}
\author[Ryoo]{Sang Woo Ryoo}
\address[Sang Woo Ryoo]{\newline Mathematics Department, Princeton University, 
\newline Princeton, New Jersey 08544-1000, United States} \email{sryoo@math.princeton.edu}
\newtheorem{theorem}{Theorem}[section]
\newtheorem{lemma}{Lemma}[section]
\newtheorem{corollary}{Corollary}[section]
\newtheorem{proposition}{Proposition}[section]
\newtheorem{example}{Example}[section]
\newtheorem{remark}{Remark}[section]
\newtheorem{conjecture}{Conjecture}[section]
\newtheorem{definition}{Definition}[section]
\newcommand{\bbr}{\mathbb R}
\newcommand{\bbc}{\mathbb C}
\newcommand{\bbs}{\mathbb S}
\newcommand{\bbz}{\mathbb Z}
\newcommand{\bbt} {\mathbb T}
\newcommand{\bbn} {\mathbb N}
\newenvironment{sproof}{%
  \proof}{\endproof}
\begin{document}
\tikzstyle{block} = [rectangle, draw, 
    text width=15em, text centered, rounded corners, minimum height=3em]
\tikzstyle{line} = [draw, -latex']

\date{\today}

\subjclass{15B48, 92D25} \keywords{Complete phase-locking, complete synchronization, the Kuramoto model, order parameters, partial phase-locking, phase-locked state}

\thanks{Acknowledgment: The work of S.-Y. Ha is partially supported by National Research Foundation of Korea Grant (NRF-2017R1A2B2001864) funded by the Korea Government}

\begin{abstract}
We study the asymptotic clustering (phase-locking) dynamics for the Kuramoto model. For the analysis of emergent asymptotic patterns in the Kuramoto flow, we introduce the pathwise critical coupling strength which yields a sharp transition from partial phase-locking to complete phase-locking, and provide nontrivial upper bounds for the pathwise critical coupling strength. Numerical simulations suggest that multi- and mono-clusters can emerge asymptotically in the Kuramoto flow depending on the relative magnitude of the coupling strength compared to the sizes of natural frequencies. However, theoretical and rigorous analysis for such phase-locking dynamics of the Kuramoto flow still lacks a complete understanding, although there were some recent progress on the complete synchronization of the Kuramoto model in a sufficiently large coupling strength regime \cite{D-B1, D-B2, H-K-R}. In this paper, we present sufficient frameworks for partial phase-locking of a majority ensemble  and the complete phase-locking in terms of the initial phase configuration, coupling strength and natural frequencies. As a by-product of our analysis, we obtain nontrivial upper bounds for the pathwise critical coupling strength in terms of the diameter of natural frequencies, initial Kuramoto order parameter and the system size $N$. We also show that phase-locked states whose order parameters are less than $N^{-\frac{1}{2}}$ are linearly unstable. 
\end{abstract}
\maketitle \centerline{\date}


\section{Introduction}
\setcounter{equation}{0}
The purpose of this paper is to continue the studies begun in a series of papers \cite{C-H-J-K, C-H-K-K, C-S, D-X, D-B1, H-K-R, H-K-R2} for the quantitative analysis on the emergent dynamics for the Kuramoto model.  After Huygens's seminal observation in the middle of the seventeenth century on the asynchronization of two pendulum clocks hanging on a common bar, synchronous phenomena have been often observed in our nature and have been reported in scientific literature \cite{B-B, P-R-K}. However, its systematic study based on mathematical models has been initiated by the two pioneers  A. Winfree and Y. Kuramoto in the 1960s and 1970s in  \cite{Ku1, Ku2, Wi}. After Winfree and Kuramoto's seminal works, many phenomenological and mechanical synchronization models were proposed in the biology, engineering and statistical physics communities. Amongst others, in this paper we focus on the Kuramoto model, which serves as a prototype model in the study of synchronization. In \cite{Ku1}, Kuramoto introduced a simple first-order dynamical system for the synchronization of an ensemble of weakly coupled phase oscillators, and showed that the corresponding mean-field kinetic equation exhibits a phase-transition like phenomenon from the disordered state to the ordered state, as the coupling strength increases from zero to some large value \cite{A-B, A-R, B-C-M, B-D-P, B-N-S, B-S,   C-H-J-K, C-H-K-K, C-D, D-A, D-X, Er, Ha, M-S1, M-St, M-S2, P-R-K, St, V-W, V-M0, V-M1, V-M2, W-S}. Thanks to this phase-transition like phenomenon in the mean-field kinetic Kuramoto equation, the Kuramoto model has been extensively investigated in the physics community, particularly in statistical physics. In this paper, we focus on the Kuramoto model, not passing the system size to infinity. Numerical simulations suggest that for a given generic initial phase configuration, the ensuing Kuramoto flow displays the following asymptotic patterns, as we increase the coupling strength:
\begin{equation}\label{situation}
\mbox{Incoherent state} \quad \Longrightarrow \quad \mbox{Partial phase-locking} \quad \Longrightarrow \quad \mbox{Complete phase-locking}.
\end{equation}
The second phase-transition from partial phase-locking to complete phase-locking has been studied in several papers \cite{C-S, D-B1, D-B2, J-M-B}. In contrast, the first phase-transition, i.e., the transition from the incoherent states to partial phase-locking, has not been much investigated except for the infinite dimensional case (see recent survey papers \cite{D-B2, H-K-P-Z}). 

Let us now go into the specifics of the Kuramoto model. Kuramoto oscillators can be visualized as rotors, indexed from $1$ through $N$, moving on the unit circle $\bbs^1$. Let $z_i =  e^{{\mathrm i} \theta_i} $ be the position of the $i$-th rotor, and let $\theta_i$ and ${\dot \theta}_i$ denote the phase and frequency of the $i$-th oscillator, respectively. Then, the dynamics of Kuramoto oscillators is governed by the following Cauchy problem: 
\begin{equation} \label{Ku}
\begin{cases}
\displaystyle \dot{\theta}_{i} = \nu_i +
\frac{\kappa}{N}\sum_{j=1}^{N}\sin(\theta_{j} - \theta_{i}), \quad t > 0, \\
\theta_i(0) = \theta_{i}^0,
\end{cases} \quad 1\leq i \leq N,
\end{equation}
where $\kappa$ is a nonnegative coupling strength, and $\nu_i$
represents the quenched natural frequency of the $i$-th oscillator. Throughout the paper, we will use the following simplified notation:
\[ {\mathcal N} := \{1, \cdots, N \}, \quad \Theta := (\theta_1, \cdots, \theta_N), \quad  \dot{\Theta} := ({\dot \theta}_1, \cdots, {\dot \theta}_N), \quad 
\Omega := (\nu_1, \cdots, \nu_N),
\]
and we set the diameters for the whole configuration:
\begin{align}
\begin{aligned} \label{New-0}
& D(\Theta) := \max_{1 \leq i, j \leq N} |\theta_i - \theta_j|, \quad D({\dot \Theta}) := \max_{1 \leq i, j \leq N} |{\dot \theta}_i - {\dot \theta}_j|, \\
& D(\Omega) :=  \max_{1 \leq i, j \leq N} |\nu_i - \nu_j|, \quad  ||\Theta||_{\infty} := \max_{1 \leq i \leq N} |\theta_i|.
\end{aligned}
\end{align}
Similarly, for any subset $\mathcal{A}\subset\mathcal{N}$ we set
\[ \Theta_\mathcal{A} := (\theta_i)_{i\in\mathcal{A}},\quad  D(\Theta_\mathcal{A}) := \max_{i,j\in\mathcal{A}} |\theta_i - \theta_j|, \quad  \Omega_\mathcal{A} := (\nu_i)_{i\in\mathcal{A}}, \quad D(\Omega_\mathcal{A}) :=  \max_{ i, j\in\mathcal{A}} |\nu_i - \nu_j|.
\]
Also, we will sometimes identify the set of indices with the set of oscillators for the sake of convenience, i.e., we will identify the index $i$ with the $i$-th oscillator from time to time. 

One immediate property of \eqref{Ku} is a conservation law: if we sum \eqref{Ku} over $i$ and use the oddness of $\sin \theta$, we obtain
\begin{equation}\label{conservation}
\frac{d}{dt} \Big( \sum_{i=1}^{N} \theta_i(t)  -t \sum_{i=1}^{N} \nu_i \Big) = 0, \quad t > 0.
\end{equation}
One consequence of this conservation law is that if $\sum_{i=1}^N \nu_i\neq 0$, then the Kuramoto model \eqref{Ku} cannot have an equilibrium. However, the Kuramoto model \eqref{Ku} has another related property, namely its Galilean invariance: \eqref{Ku} is invariant under Galilean transformations such as
\begin{equation}\label{galilean}
\theta_i\mapsto \theta_i-t\cdot \frac{1}{N}\sum_{j=1}^{N} \nu_j,\quad \nu_i\mapsto\nu_i-\frac{1}{N}\sum_{j=1}^{N} \nu_j,\quad i=1,\cdots,N.
\end{equation}
We may assume that we have already taken the transformation \eqref{galilean}. (This does not affect important parameters such as $D(\Theta)$, $D(\Omega)$, $R(\Theta)$ and $\Delta(\Theta)$, which are defined in \eqref{New-0} and Section \ref{sec:2}.) The advantage of this assumption is that the average of the natural frequencies is zero:
\begin{equation} \label{zero-nat}
\sum_{i=1}^{N} \nu_i = 0,
\end{equation}
and the conservation law \eqref{conservation} now becomes conservation of total phase:
\begin{equation}\label{conservation-zero}
\sum_{i=1}^N \theta_i=\sum_{i=1}^N \theta_i^0.
\end{equation}
Therefore, it makes sense to discuss equilibria of the transformed variables \eqref{galilean}. Equilibria of \eqref{Ku} under the Galilean transformation \eqref{galilean}, i.e., equilibria relative to a frame rotating with the average phase velocity $\frac{1}{N}\sum_{i=1}^N \nu_i$,  are called phase-locked states, and convergence to such equilibria is called (asymptotic) phase-locking. A detailed definition will be given in Definition \ref{D2.1}.

Another common observation is that the right-hand side of \eqref{Ku} is $2\pi$-periodic, and thus system \eqref{Ku} can be regarded either as a dynamical system of the variables $\{e^{{\mathrm i}\theta_i}\}_{i=1}^N$ on the $N$-torus $\bbt^N$, or as a dynamical system of the variables $\{\theta_i\}_{i=1}^N$ on the Euclidean space $\bbr^N$. Formally, we may view the dynamical system on the Euclidean space $\bbr^N$ as a lift of the dynamical system on $\bbt^N$. In this paper, for convenience of argument, we will use the formulation on $\bbr^N$ in Sections \ref{sec:2} to \ref{sec:5}, and that on $\bbt^N$ in Section \ref{sec:6}. 

Numerical simulations suggest that emergence of phase-locking is possible for generic initial configurations, as long as the coupling strength satisfies $\kappa >D(\Omega)$. A plethora of research was focused on the rigorous verification of this numerical observation. In \cite{C-S, D-B1, D-B2, H-H-K, H-K-P, H-L-X, J-M-B}, the emergence of phase-locked states has been studied for some restricted class of initial (phase) configurations, particularly for phase configurations confined in a half circle.  Recently, Dong and Xue \cite{D-X} employed the gradient flow formulation for  the Kuramoto model and, using this alternative formulation, they showed that in the ensemble of identical oscillators, i.e., $\nu_i=\nu_j$ for all indices $i$ and $j$, the Kuramoto flow exhibits phase-locking for all initial configurations and any positive coupling strength. For the nonidentical case, the authors in \cite{H-K-R} used a comparison argument between the identical and nonidentical case to establish asymptotic phase-locking in the nonidentical case for sufficiently strong coupling. However, such an argument was not able to give an explicit critical coupling strength independent of the system size $N$ and initial data $\Theta^0$. Hence, some natural questions to think of are as follows.

\smallskip

\begin{itemize}
\item
(Q1): Can we describe a sufficient coupling strength for the complete phase-locking explicitly in terms of $\Omega, \Theta^0$ and $N$?

\vspace{0.2cm}

\item
(Q2): Can we describe the emergence of partial phase-locking rigorously?

\vspace{0.2cm}

\item
(Q3): Can we find a minimal coupling strength leading to complete phase-locking from a generic initial phase configuration?
\end{itemize}

\smallskip

In this paper, we answer the first and second questions, i.e., we provide a sufficient coupling strength leading to complete phase-locking in terms of $D(\Omega)$ and $R_0$ (to be defined in Section \ref{sec:2.1}), and a sufficient framework for the formation of partial phase-locking ensembles.

The novelty of this paper is three-fold. First, we provide a geometric condition on the initial phase vector $\Theta^0$ that guarantees partial phase-locking, which not only explains the first phase-transition in \eqref{situation}, but also explains the second phase-transition in \eqref{situation} as a special case. More precisely, we show that for given sets of indices $\mathcal{A}\subset\mathcal{B}\subset\mathcal{N}$, if parameters $\gamma,\ell$ and $\kappa$ satisfy
\[
\frac{1}{2} < \gamma \leq 1, \quad \ell \in\left(0,2\cos^{-1} \Big( \frac{1}{\gamma} - 1 \Big) \right), \quad \kappa > C(\gamma, \ell) D(\Omega_\mathcal{B}),
\]
then the ensemble ${\mathcal A}$ forms a well-ordered stable $\gamma$-ensemble arranged in accordance with the order of natural frequencies, and the ensemble ${\mathcal B}$ becomes partially phase-locked (see Theorem \ref{T3.1} for the precise statement, and  Definitions \ref{D2.1} and \ref{D3.1} for the meaning of the jargons $\gamma$-ensemble, stable $\gamma$-ensemble and partial and complete phase-locking). 

Second, we present an improved result on complete phase-locking. So far, the best-known result about the coupling strength which guarantees asymptotic phase-locking for generic initial data is that it exists and is finite for generic initial data (see Proposition \ref{P2.2}). We improve this result by exhibiting an explicit coupling strength, namely $1.6\frac{D(\Omega)}{R_0^2}$ (see Theorem \ref{T3.2}). This has the advantage that it is free of the system size, and hence this result can be lifted to the mean-field kinetic regime via the uniform-in-time mean-field limit $N \to \infty$.

Third, we show that the region consisting of state vectors $\Theta$ whose order parameters are less than $\frac{1}{\sqrt{N}}$ is unstable in some sense for any positive coupling strength, and using this result, we identify another sufficient coupling strength  $1.6ND(\Omega)$ for complete phase-locking. Although this coupling strength has the drawback that it directly depends on $N$, it has the advantage that it is independent of the initial configuration once we fix $N$ and $\Omega$. \newline

The rest of this paper is organized as follows. In Section \ref{sec:2}, we review a theoretical minimum for synchronization, namely the concepts of partial phase-locking and complete phase-locking, the order parameters, the gradient flow formulation of the Kuramoto model, and the state-of-the-art results on complete phase-locking. 
In Section \ref{sec:3}, we summarize our main results and compare them with earlier results. In Section \ref{sec:4}, we present geometric conditions on the phase vector $\Theta^0$ that ensure partial phase-locking and discuss its detailed asymptotic behavior. In Section \ref{sec:5}, we provide a proof of our improved result on the emergence of complete phase-locking, namely that $1.6\frac{D(\Omega)}{R_0^2}$ is a sufficient coupling strength. In Section \ref{sec:6}, we analyze the divergence of the vector field \eqref{Ku} and provide the sufficient coupling strength $1.6ND(\Omega)$. Finally, Section \ref{sec:7} is devoted to a brief summary and discussion of the main results of this paper and possible future research directions. In Appendix A, we provide a  proof of Lemma \ref{L5.3}, which is the most technical part of Section \ref{sec:5}.

\section{Preliminaries} \label{sec:2}
\setcounter{equation}{0}
In this section, we recall the concepts of complete and partial phase-locking, the definition of the order parameter, and the gradient flow formulation for the Kuramoto model. \newline

As discussed in \eqref{conservation}-\eqref{conservation-zero}, phase-locked states are relative equilibria with respect to a rotating frame with the average phase velocity $\frac{1}{N} \sum_{i=1}^{N} \nu_i$. Equivalently, we may characterize phase-locked states (relative equilibria) and phase-locking using the relative phase differences as follows.
\begin{definition}  \label{D2.1}
\emph{\cite{A-B, D-B2, H-K-R}}  Let $\Theta = (\theta_1, \cdots, \theta_N)$ be a phase vector whose components satisfy the Kuramoto model \eqref{Ku} on $\bbr^N$.
\begin{enumerate}
\item
$\Theta = \Theta(t)$ is a phase-locked state of \eqref{Ku}, if all relative phase differences are constant:
\[  \theta_i(t) - \theta_j(t) = \theta_i(0) - \theta_j(0), \quad t \geq 0,  \qquad 1 \leq i, j \leq N. \] 
\item
$\Theta = \Theta(t)$ exhibits (asymptotic) complete phase-locking if the relative phase differences converge as $t \to \infty$:
\[ \exists \lim_{t \to \infty} (\theta_i(t) - \theta_j(t)), \quad 1 \leq i, j \leq N. \]
\item
$\Theta = \Theta(t)$ exhibits complete synchronization if the relative frequency differences converge as $t \to \infty$:
\[  \lim_{t \to \infty} |{\dot \theta}_i(t) - {\dot \theta}_j(t)| = 0, \quad 1 \leq i, j \leq N. \]
\end{enumerate}
\end{definition}
\begin{remark} \label{R2.1}
1. It is easy to see that complete phase-locking implies complete synchronization, i.e., under complete phase-locking, the relative frequencies must tend to zero asymptotically:
\[ \lim_{t \to \infty} |{\dot \theta}_i(t) - {\dot \theta}_j(t)| = 0, \quad 1 \leq i, j \leq N. \]
In general, the converse may not be true. However, for the Kuramoto model, since complete synchronization occurs exponentially fast, complete synchronization implies complete phase-locking. \newline

\noindent 2. From time to time, we will omit the adjective ``complete" in the phrase ``complete phase-locking". Thus, throughout the paper, phase-locking means complete phase-locking.
\end{remark}

If the coupling strength $\kappa>0$ is not large enough compared to the natural frequency diameter $D(\Omega)$, the Kuramoto model may fail to exhibit complete phase-locking and the whole Kuramoto ensemble can roughly be classified into three types of oscillators: {\it synchronizing oscillators, drifting oscillators, and fuzzy oscillators}. The first group of synchronizing oscillators consists of oscillators $\theta_i$ whose natural frequencies are much smaller than $\kappa$. Their dynamics \eqref{Ku} is governed mostly by the attractive nonlinear term  $\frac{\kappa}{N}\sum_j \sin(\theta_j-\theta_i) $ in \eqref{Ku}  and as such they display synchronous behavior. The second group of drifting oscillators consists of oscillators $\theta_i$ whose natural frequencies are much larger than $\kappa$. Their dynamics \eqref{Ku} is governed mostly by the linear term  $\nu_i $ in \eqref{Ku}  and as such they tend to drift along the circle at about a constant rate. The last group of fuzzy oscillators consists of oscillators whose natural frequencies are comparable to $\kappa$. Whether the linear or nonlinear term in \eqref{Ku} will be dominant will depend on the strength of the nonlinear attractive term(i.e., the order parameter) at each time, and so they tend to behave like synchronizing oscillators at one time and drifting oscillators at another time. The exact, rigorous, and complete classification of the above three groups in the finite-$N$ case is an important question which might also be related to complete integrability. See \cite{St} for relevant discussion. At present, we suggest that we can identify the first group of synchronizing oscillators in the following rigorous manner:
\begin{definition}
Let $\Theta = (\theta_1, \cdots, \theta_N)$ be a phase vector whose components satisfy the Kuramoto model \eqref{Ku} on $\bbr^N$. We say that $\Theta = \Theta(t)$ exhibits (asymptotic) partial phase-locking if there exists a proper subset ${\mathcal A}$ of ${\mathcal N}$ such that 
\[ \sup_{0 \leq t < \infty} D(\Theta_{\mathcal A}(t)) < \infty. \]
\end{definition}
\begin{remark}
\noindent 1. Here, we are suggesting that  $\Theta_\mathcal{A}$ is a group of synchronizing oscillators.

\noindent 2. The reason we do not require the phase differences to converge, but only require them to be bounded, is that we do not assume anything about the behavior of $\Theta_{\mathcal{N}\backslash \mathcal{A}}$. Like the definition of partial phase-locking, we can say that  $\Theta = \Theta(t)$ exhibits weak phase-locking if the phase differences are uniformly bounded:
\[ \sup_{0 \leq t < \infty} D(\Theta(t)) < \infty. \]
Complete phase-locking trivially implies weak phase-locking, and the converse holds by Proposition \ref{NoChaos}.
\end{remark}

\vspace{0.5cm}

Next, we present two alternative formulations of the Kuramoto model, namely in a mean-field form in terms of order parameters and in the form of a potential flow, which serve complementary roles in the complete synchronization estimates.

\subsection{Order parameters} \label{sec:2.1}  Let $\Theta  = \Theta(t)$ be an $N$-phase vector whose time evolution is governed by \eqref{Ku}. Then we define real order parameters $R(\Theta)$ and $\phi(\Theta)$ by the following relation:
\begin{equation} \label{Order}
R(\Theta) e^{\mathrm{i} \phi(\Theta)} :=\frac{1}{N}\sum_{j=1}^{N}e^{\mathrm{i}\theta_j}.
\end{equation}
In other words, they correspond to the modulus and argument of the centroid $\frac{1}{N}\sum_{j=1}^{N}e^{\mathrm{i}\theta_j}$ of $\{ e^{\mathrm{i}\theta_j} \}$ in the complex plane $\bbc$.

The amplitude order parameter $R(\Theta)$ is well-defined for all $t\ge 0$, is bounded by 0 and 1, and is invariant under uniform rotation. It measures the overall ``phase coherence" of the ensemble $\Theta$. For example, $R(\Theta)=1$ corresponds to the state in which all phases are the same, i.e., complete phase synchronization:
\[ R(\Theta) = 1 \quad \iff \quad \Theta = (\alpha, \cdots, \alpha)\mod 2\pi, \quad \mbox{for some } \alpha \in \bbr, \]
whereas $R(\Theta)  = 0$ corresponds to an incoherent state in which oscillators behave independently. 

On the other hand, $\phi(\Theta)$ is well defined modulo $2\pi$ if $R(\Theta)>0$, but it is meaningless when $R(\Theta)=0$. If we suppose $R(\Theta(t))>0$ for all $t$ in some time interval $\mathcal{I}$, then it is possible to choose a branch of $\phi(\Theta(t))$ smoothly on $\mathcal{I}$.

If there is no confusion, we sometimes suppress $\Theta$-dependence on $R$ and $\phi$:
\[ R(t) := R(\Theta(t)), \qquad \phi(t) := \phi(\Theta(t)), \quad t \in \mathcal{I}. \]

We can extract various identities from \eqref{Order}. First, by comparing the real and imaginary parts of both sides of \eqref{Order}, we have
\begin{equation*} \label{order-rel-0}
 R \cos \phi =  \frac{1}{N}\sum_{j=1}^{N} \cos \theta_j, \qquad R \sin \phi =  \frac{1}{N}\sum_{j=1}^{N} \sin \theta_j.
 \end{equation*}
We can also divide both sides of the relation \eqref{Order} by  $e^{{\mathrm i} \theta_i}$ and compare real and imaginary parts to get
\begin{equation} \label{order-rel-1}
 R \cos(\phi - \theta_i) = \frac{1}{N} \sum_{j=1}^{N} \cos(\theta_j  - \theta_i), \qquad  R \sin(\phi - \theta_i)  =  \frac{1}{N} \sum_{j=1}^{N} \sin(\theta_j  - \theta_i).
\end{equation}
Likewise, we can divide \eqref{Order} by $e^{{\mathrm i} \phi}$ to get
\begin{equation}\label{order-rel-2}
R=\frac{1}{N}\sum_{j=1}^N \cos(\theta_j-\phi),\qquad 0=\frac{1}{N}\sum_{j=1}^N \sin(\theta_j-\phi).
\end{equation}
Using these relations, we can obtain
\begin{equation}\label{order-rel-3}
    R^2\stackrel{\eqref{order-rel-2}_1}{=}\frac{1}{N}\sum_{i=1}^N R\cos (\theta_i-\phi)\stackrel{\eqref{order-rel-1}_1}{=}\frac{1}{N^2}\sum_{i,j=1}^N\cos(\theta_j-\theta_i).
\end{equation}
The Kuramoto model can be rewritten in mean-field form using the relation \eqref{order-rel-1}:
\begin{equation} \label{Ku-mf}
{\dot \theta}_i  = \nu_i - \kappa R \sin(\theta_i - \phi),\quad t\in \mathcal{I},
\end{equation}
that is, the oscillators $\theta_i$ determine the ``mean-field" $R$ and $\phi$ which in turn govern the dynamics of each $\theta_i$. Furthermore, as noted in \cite{H-K-P, H-S}, we can differentiate the relation \eqref{Order} with respect to $t$, and use \eqref{Ku-mf} to obtain the coupled system:
\begin{equation}\label{order-dyn}
\begin{cases} 
\displaystyle {\dot R} = -\frac{1}{N} \sum_{i=1}^{N}  \sin (\theta_i
- \phi) \Big( \nu_i -  \kappa R \sin (\theta_i - \phi) \Big), \quad t\in \mathcal{I}, \\
\displaystyle {\dot \phi} = \frac{1}{RN} \sum_{i=1}^{N} \cos (\theta_i - \phi)
\Big( \nu_i - \kappa R \sin (\theta_i - \phi) \Big).
\end{cases}
\end{equation}

We now introduce an auxiliary  functional measuring the mean square distance between the configuration $\Theta$ on $\bbs^1$ and the straight line in $\bbc$ connecting $e^{{\mathrm i}\phi}$ and $e^{{\mathrm i}(\phi+\pi)}$:
\begin{equation} \label{New-1}
 \Delta(t) := \frac{1}{N}\sum_{k=1}^N \sin^2(\theta_k-\phi)(t), \quad t \in \mathcal{I}.  
\end{equation}
Then, we have the following growth estimate:
\begin{lemma}\label{L2.1}
Let $R$ and $\phi$ be order parameters whose dynamics is governed by the coupled system \eqref{order-dyn}. Then, we have
\[
\dot{R}\geq \kappa \sqrt{\Delta} \left( R \sqrt{\Delta} -\frac{D(\Omega)}{2\kappa}\right),\quad t\in \mathcal{I}.
\]
\end{lemma}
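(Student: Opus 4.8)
The plan is to return to the first equation of the coupled system \eqref{order-dyn} and split its right-hand side into a natural-frequency contribution and a coupling contribution,
\[ \dot R = -\frac{1}{N}\sum_{i=1}^{N} \nu_i \sin(\theta_i-\phi) + \frac{\kappa R}{N}\sum_{i=1}^{N} \sin^2(\theta_i-\phi). \]
By the definition \eqref{New-1}, the second term equals $\kappa R\Delta$, so the whole estimate reduces to bounding the first term from below.

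For that, the crucial input is the identity $\frac{1}{N}\sum_{i=1}^{N}\sin(\theta_i-\phi)=0$ from \eqref{order-rel-2}, which lets us recenter the natural frequencies: for \emph{any} constant $c$,
\[ \frac{1}{N}\sum_{i=1}^{N}\nu_i\sin(\theta_i-\phi) = \frac{1}{N}\sum_{i=1}^{N}(\nu_i-c)\sin(\theta_i-\phi). \]
Choosing $c=\tfrac12\big(\max_i\nu_i+\min_i\nu_i\big)$ gives $|\nu_i-c|\le\tfrac12 D(\Omega)$ for every $i$, whence by the triangle inequality the frequency term is bounded in absolute value by $\tfrac{D(\Omega)}{2}\cdot\frac1N\sum_{i=1}^N|\sin(\theta_i-\phi)|$.

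It then remains to pass from this $\ell^1$-average of $|\sin(\theta_i-\phi)|$ to the $\ell^2$-average defining $\sqrt{\Delta}$: by the Cauchy--Schwarz inequality (equivalently, concavity of the square root),
\[ \frac{1}{N}\sum_{i=1}^{N}|\sin(\theta_i-\phi)| \le \left(\frac{1}{N}\sum_{i=1}^{N}\sin^2(\theta_i-\phi)\right)^{1/2} = \sqrt{\Delta}. \]
Combining the three displays yields $\dot R \ge -\tfrac{D(\Omega)}{2}\sqrt{\Delta} + \kappa R\Delta$, and factoring out $\kappa\sqrt{\Delta}$ produces the claimed inequality.

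There is no genuine obstacle in this argument; the only step deserving care is the recentering of the $\nu_i$, which is precisely what delivers the sharp constant $\tfrac12$ in front of $D(\Omega)$ rather than a cruder bound involving $\max_i|\nu_i|$. Note also that this recentering relies only on the identity $\sum_i\sin(\theta_i-\phi)=0$, so the estimate holds regardless of whether the normalization \eqref{zero-nat} has been imposed.
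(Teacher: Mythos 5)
Your proof is correct and follows essentially the same route as the paper: expand $\eqref{order-dyn}_1$, use $\eqref{order-rel-2}_2$ to recenter the natural frequencies about the midpoint $\omega$ of their range (so each $|\nu_i-\omega|\le D(\Omega)/2$), and pass from the $\ell^1$-average of $|\sin(\theta_i-\phi)|$ to $\sqrt{\Delta}$ via Cauchy--Schwarz. Your closing observation that the argument does not require the normalization \eqref{zero-nat} is accurate and consistent with the paper's proof.
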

\begin{proof}
We expand $\eqref{order-dyn}_1$ and use $\eqref{order-rel-2}_2$ to see
\begin{align}
\begin{aligned} \label{New-1-1}
{\dot R} &= \frac{\kappa R}{N} \sum_{i=1}^{N}  \sin^2 (\theta_i- \phi) -\frac{1}{N} \sum_{i=1}^{N}  \nu_i\sin (\theta_i- \phi) \\
&= \frac{\kappa R}{N} \sum_{i=1}^{N}  \sin^2 (\theta_i- \phi) -\frac{1}{N} \sum_{i=1}^{N}  (\nu_i-\omega)\sin (\theta_i- \phi),
\end{aligned}
\end{align}
where $\omega\in\bbr$ is the real number satisfying the relation:
\[
\max_{1\leq i \leq N} |\nu_i-\omega|=\frac{D(\Omega)}{2},
\]
On the other hand, we use the Cauchy-Schwarz inequality to get 
\begin{equation} \label{New-2}
 \sum_{i=1}^N |\sin(\theta_i-\phi)| \leq  \sqrt{N} \sqrt{ \sum_{i=1}^N  |\sin(\theta_i-\phi)|^2}  = N \sqrt{\Delta}.   
\end{equation} 
We use \eqref{New-1}, \eqref{New-1-1} and \eqref{New-2} to obtain
\begin{align*}
{\dot R} &\ge \kappa R\Delta -\frac{1}{N}\sum_{i=1}^N |\nu_i-\omega|\cdot|\sin(\theta_i-\phi)| \ge \kappa R\Delta -\frac{D(\Omega)}{2}\cdot \frac{1}{N}\sum_{i=1}^N |\sin(\theta_i-\phi)|\\
&\ge \kappa R\Delta -\frac{D(\Omega)}{2} \sqrt{\Delta}
\end{align*}
We take out the common factor $\kappa \sqrt{\Delta}$ to obtain the desired differential inequality for $R$.
\end{proof}

\subsection{A gradient flow formulation} \label{sec:2.2}
In this subsection, we present a second alternative formulation of the Kuramoto model.  For a given natural frequency vector $\Omega = (\nu_1, \cdots, \nu_N)$, we can easily verify that the Kuramoto model \eqref{Ku} can be recast as a  gradient flow with analytic potential $V$ on $\bbr^N$:
\begin{equation} \label{Ku-grad}
 {\dot \Theta} = -\nabla_{\Theta} V(\Theta), \quad t > 0,
\end{equation}
where the analytic potential $V[\Theta]$ is given as follows.
\begin{equation} \label{potential}
V[\Theta] :=-\sum_{k=1}^N\nu_k \theta_k+ \frac{\kappa}{2N}\sum_{k,l=1}^N \big(1-\cos(\theta_k-\theta_l)\big).
\end{equation}
Note that the double sum in \eqref{potential} can be rewritten in terms of the order parameter $R$ as follows:
\begin{equation} \label{p-o}
 \frac{\kappa}{2N}\sum_{k,l=1}^N \big(1-\cos(\theta_k-\theta_l)\big)  =\frac{\kappa N}{2}\Big[1- \frac{1}{N^2}\sum_{k,l=1}^N \cos(\theta_k-\theta_l) \Big]\stackrel{\eqref{order-rel-3}}{=}  \frac{\kappa N}{2} (1 -R^2).
 \end{equation}
 Then, it follows from \eqref{potential} and \eqref{p-o} that 
\[ 
V[\Theta]  = - \Omega \cdot \Theta  + \frac{\kappa N}{2} (1 -R^2), \]
where $\cdot$ is the usual inner product in $\bbr^N$. The {\L}ojasiewicz gradient theorem guarantees that the Kuramoto model \eqref{Ku-grad}--\eqref{potential}, as a gradient flow with real analytic potential, has the following property regarding the asymptotic dynamics.
\begin{proposition}\label{NoChaos}
\emph{\cite{H-L-X}}
Let $\Theta = \Theta(t)$ be a uniformly bounded global solution to \eqref{Ku}--\eqref{zero-nat} in $\bbr^N$:
\begin{equation}\label{bdd}
\sup_{0 \leq t < \infty} ||\Theta(t)||_{\infty} <\infty.
\end{equation}
Then, the phase vector $\Theta(t)$ and the frequency vector
 $\dot \Theta(t)$ converge to a phase locked state and the zero vector,
 respectively, as $t\to \infty$. In other words, there exists a phase locked state
 $\Theta^{\infty}$ such that
\[ \lim_{t \to \infty} ||\Theta(t) - \Theta^{\infty}||_{\infty}  = 0 \quad \mbox{and} \quad \lim_{t \to \infty} ||{\dot \Theta}(t)||_{\infty} = 0. \]
\end{proposition}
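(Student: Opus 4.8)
The plan is to invoke the {\L}ojasiewicz--Simon convergence theory for gradient flows of real analytic functionals, applied to the potential $V$ in \eqref{potential}. First I would record the three structural facts that feed into this machinery: (i) the potential $V[\Theta] = -\Omega\cdot\Theta + \frac{\kappa N}{2}(1-R^2)$ is real analytic on $\bbr^N$, being a finite sum of a linear function and trigonometric terms; (ii) along any solution, $\frac{d}{dt}V(\Theta(t)) = \nabla_\Theta V(\Theta(t))\cdot\dot\Theta(t) = -|\nabla_\Theta V(\Theta(t))|^{2} = -|\dot\Theta(t)|^{2}\le 0$, so $t\mapsto V(\Theta(t))$ is nonincreasing; (iii) by the standing hypothesis \eqref{bdd}, the trajectory $\{\Theta(t):t\ge 0\}$ is precompact, so its $\omega$-limit set $\omega(\Theta)$ is nonempty, compact and connected, and $V$ (although not bounded below on $\bbr^N$) is bounded below on the compact closure of the trajectory. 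Hence $V(\Theta(t))\searrow V_\infty$ for some finite $V_\infty$, and a routine LaSalle-type argument (using invariance of $\omega(\Theta)$ together with the fact that $V\equiv V_\infty$ on $\omega(\Theta)$) shows $\nabla_\Theta V\equiv 0$ on $\omega(\Theta)$; thus every $\omega$-limit point is an equilibrium of \eqref{Ku} relative to the rotating frame, i.e. a phase-locked state in the sense of Definition \ref{D2.1}.

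Next I would fix one point $\Theta^\ast\in\omega(\Theta)$ and apply the {\L}ojasiewicz gradient inequality for the analytic function $V$ at its critical point $\Theta^\ast$: there exist a neighborhood $U$ of $\Theta^\ast$ and constants $C>0$, $\mu\in(0,\tfrac12]$ such that
\[
|V(\Theta)-V_\infty|^{1-\mu} \le C\,|\nabla_\Theta V(\Theta)|, \qquad \Theta\in U.
\]
The core of the argument is the standard trick of converting this into an integrable bound on arc length. On any time interval during which $\Theta(t)\in U$ and $V(\Theta(t))>V_\infty$, one computes
\[
-\frac{d}{dt}\big(V(\Theta(t))-V_\infty\big)^{\mu}
= \mu\,\big(V(\Theta(t))-V_\infty\big)^{\mu-1}|\dot\Theta(t)|^{2}
\ge \frac{\mu}{C}\,|\dot\Theta(t)|,
\]
so that $\int|\dot\Theta(t)|\,dt$ over such intervals is bounded by $\frac{C}{\mu}\big(V(\Theta(t_0))-V_\infty\big)^{\mu}$, which is small for $t_0$ large. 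A continuity/bootstrap argument — choosing $t_0$ so large that $\Theta(t_0)$ is close to $\Theta^\ast$ and $\big(V(\Theta(t_0))-V_\infty\big)^{\mu}$ is smaller than the remaining distance budget to $\partial U$ — then shows the trajectory never leaves $U$ after $t_0$, whence $\int_0^\infty|\dot\Theta(t)|\,dt<\infty$. Consequently the trajectory has finite arc length, so $\Theta(t)$ converges to a limit $\Theta^\infty$, which must lie in $\omega(\Theta)$ and is therefore a phase-locked state. Finally $\dot\Theta(t) = -\nabla_\Theta V(\Theta(t))\to -\nabla_\Theta V(\Theta^\infty) = 0$ by continuity of $\nabla_\Theta V$ and the fact that $\Theta^\infty$ is an equilibrium, which yields the second claimed limit.

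The main obstacle is not conceptual but the usual bookkeeping in the {\L}ojasiewicz argument: the neighborhood $U$ and the exponent $\mu$ depend on the chosen limit point $\Theta^\ast$, and a priori the trajectory could repeatedly exit $U$, so one must run the bootstrap carefully to confine the trajectory once the energy defect $V(\Theta(t))-V_\infty$ is small. A minor degenerate case to dispatch along the way is when $V(\Theta(t))=V_\infty$ at some finite $t$; then $\dot V\equiv 0$ thereafter forces $\dot\Theta\equiv 0$ and the conclusion is immediate. Everything else — real analyticity of $V$, precompactness of the orbit from \eqref{bdd}, and the identification of critical points of $V$ with phase-locked states — is routine.
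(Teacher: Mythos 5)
Your proposal is correct and follows exactly the route the paper indicates: the paper does not write out a proof but attributes the result to the {\L}ojasiewicz gradient theorem applied to the gradient-flow formulation \eqref{Ku-grad}--\eqref{potential} with real analytic potential (citing \cite{H-L-X}), and your argument is precisely the standard {\L}ojasiewicz--Simon convergence proof for that setting, with the bookkeeping (precompactness from \eqref{bdd}, LaSalle identification of $\omega$-limit points as critical points, the arc-length bootstrap, and the degenerate case) handled correctly.
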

\begin{remark} \label{R2.2}
1.In dynamical systems theory, uniform boundedness does not generally imply convergence. Thus, Proposition \ref{NoChaos}  is essentially due to the gradient flow structure of the Kuramoto flow with an analytical potential. \newline

\noindent 2. By the conservation law \eqref{conservation-zero} under \eqref{zero-nat}, \eqref{bdd} is equivalent to
\[
\sup_{0 \leq t < \infty} D(\Theta(t)) <\infty.
\]
\end{remark}
Next, we recall that finiteness of the collisions between oscillators is equivalent to asymptotic phase-locking.
\begin{theorem} \label{T2.1}
{\cite{H-K-R2}}
The Kuramoto flow $\Theta=\Theta(t)$ of \eqref{Ku} achieves asymptotic phase-locking if and only if there is at most a finite number of collisions between any pair of oscillators.
Here, a collision between $\theta_i$ and $\theta_j$ at time $t=t_0$ means that
\[\nu_i\neq \nu_j,\quad \theta_i(t_0)\equiv\theta_j(t_0) \mod 2\pi.\]
\end{theorem}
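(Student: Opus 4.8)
The plan is to prove the two implications separately, the key elementary observation being that at a collision the nonlinear coupling terms acting on the two colliding oscillators coincide: if $\theta_i(t_0)\equiv\theta_j(t_0)\bmod 2\pi$, then $\sin(\theta_k-\theta_i)(t_0)=\sin(\theta_k-\theta_j)(t_0)$ for every $k$, so subtracting the two equations in \eqref{Ku} at $t=t_0$ gives the collision identity $\dot\theta_i(t_0)-\dot\theta_j(t_0)=\nu_i-\nu_j$.

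\emph{Necessity.} Suppose $\Theta$ is asymptotically phase-locked. By Remark~\ref{R2.1} the relative frequencies vanish as $t\to\infty$, i.e.\ $\dot\theta_i(t)-\dot\theta_j(t)\to 0$ for every pair $(i,j)$. Fix $(i,j)$ with $\nu_i\neq\nu_j$ and suppose it undergoes infinitely many collisions. Since the vector field in \eqref{Ku} is real analytic, $\theta_i-\theta_j$ is a real-analytic function of $t$; hence the collision times cannot accumulate at a finite point, for otherwise $\theta_i-\theta_j$ would take a fixed value $2\pi m$ on a set with a finite accumulation point, forcing $\theta_i-\theta_j\equiv 2\pi m$ on $[0,\infty)$ and therefore $\dot\theta_i-\dot\theta_j\equiv 0$, which contradicts the collision identity above since $\nu_i\neq\nu_j$. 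So there is a sequence of collision times $t_n\to\infty$, and evaluating the collision identity along $t_n$ yields $\nu_i-\nu_j=\lim_n(\dot\theta_i(t_n)-\dot\theta_j(t_n))=0$, a contradiction. Hence every pair collides only finitely often.

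\emph{Sufficiency.} Assume every pair collides finitely often; by Proposition~\ref{NoChaos} and Remark~\ref{R2.2} it suffices to show $\sup_{t\ge0}D(\Theta(t))<\infty$. If all natural frequencies coincide, then by \eqref{zero-nat} they all vanish, the system reduces to identical oscillators, and phase-locking (hence the desired bound) holds by \cite{D-X}. Otherwise fix indices $j_0,j_1$ with $\nu_{j_0}\neq\nu_{j_1}$. Since there are finitely many collisions in total, there is a time $t^\ast$ after which none occur; fix $T>t^\ast$. For any pair $(i,j)$ with $\nu_i\neq\nu_j$, the continuous function $t\mapsto\theta_i(t)-\theta_j(t)$ avoids the set $2\pi\bbz$ on $[T,\infty)$, so by the intermediate value theorem it stays in one interval $(2\pi k,2\pi(k+1))$ there; together with continuity on the compact interval $[0,T]$ this shows $\theta_i-\theta_j$ is bounded on $[0,\infty)$. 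Now every index $i$ satisfies $\nu_i\neq\nu_{j_0}$ or $\nu_i\neq\nu_{j_1}$, so $\theta_i-\theta_{j_0}$ is bounded — directly in the first case, and in the second via the bounded quantities $\theta_i-\theta_{j_1}$ and $\theta_{j_1}-\theta_{j_0}$. Hence $\theta_i-\theta_{i'}=(\theta_i-\theta_{j_0})-(\theta_{i'}-\theta_{j_0})$ is bounded for all $i,i'$, i.e.\ $D(\Theta)$ is bounded, and Proposition~\ref{NoChaos} then gives convergence of $\Theta(t)$ to a phase-locked state.

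\emph{Main obstacle.} The delicate step is sufficiency for oscillators sharing a natural frequency: the hypothesis allows infinitely many collisions among them, so their phase differences cannot be confined to a single period directly. Transferring boundedness through a reference pair with distinct frequencies resolves this, while the genuinely degenerate case of globally identical frequencies is handed to \cite{D-X}. A minor secondary issue, on the necessity side, is ruling out a finite-time accumulation of collisions, which is where real-analyticity of the Kuramoto flow enters.
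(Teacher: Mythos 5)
Your proof is correct and follows the same strategy as the paper's sketch: the ``only if'' direction rests on the elementary collision identity $\dot\theta_i(t_0)-\dot\theta_j(t_0)=\nu_i-\nu_j$, and the ``if'' direction bounds all phase differences $|\theta_i-\theta_j|$ so that the conservation law and Proposition \ref{NoChaos} apply. The details you supply (the analyticity/intermediate-value arguments, the reference-pair trick for oscillators sharing a natural frequency, and the appeal to \cite{D-X} for the all-identical case) are sound and fill in exactly what the paper defers to \cite{H-K-R2}.
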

\begin{sproof}
The `only if' statement is trivial. The `if' statement is proved by using the conservation law $\sum_i \theta_i=const$ and the boundedness of the $|\theta_i-\theta_j|$'s to obtain the boundedness of $\sup_{0 \leq t < \infty} ||\Theta(t)||_{\infty}$, which allows us to use Proposition \ref{NoChaos}.
\end{sproof}

\begin{remark} \label{R2.3}
As noticed in \cite{H-K-R2}, collisions occur one-way, i.e., if $\theta_i(t_0)\equiv \theta_j(t_0) \mod 2\pi$ and $\nu_i>\nu_j$, then
\[
\dot{\theta}_i(t_0)-\dot{\theta}_j(t_0)=\nu_i-\nu_j>0.
\]
Also, if $\theta_i(t_0)\equiv \theta_j(t_0) \mod 2\pi$ and $\nu_i=\nu_j$, then by exchanging the roles of $\theta_i$ and $\theta_j$ and using the uniqueness of solutions to \eqref{Ku}, we obtain
\[
\theta_i(t)=\theta_j(t)\,\quad t\in\bbr,
\]
i.e., collision does not occur. These phenomena are all due to the fact that the coupling in \eqref{Ku} is the all-to-all coupling.
\end{remark}

\subsection{Previous results} \label{sec:2.3} In this subsection, we briefly review previous results on the emergence of asymptotic phase-locking. We first look at the special case $N=2$, which exhibits a good example of the behavior of the Kuramoto model.
\begin{example} \cite{D-B1} \label{N=2}
Consider the case $N=2$ and assume without loss of generality that $\nu_1\ge \nu_2$. Then \eqref{Ku} becomes a two-oscillator system:
\[ 
{\dot \theta}_1 = \nu_1 + \frac{\kappa}{2}  \sin(\theta_2 - \theta_1), \qquad 
{\dot \theta}_2 = \nu_2 + \frac{\kappa}{2}  \sin(\theta_1 - \theta_2).
\]
Then, the phase difference $\theta_1-\theta_2$ satisfies the Adler equation
\[
\frac{d}{dt}(\theta_1-\theta_2)=D(\Omega)- \kappa \sin (\theta_1-\theta_2).
\] 
It is easy to see that we have a dichotomy: the difference $\theta_1-\theta_2$ will not converge if $\kappa <D(\Omega)$, but will always converge if $\kappa \ge D(\Omega)$. Specifically, we have the following classification: \newline
\begin{itemize}
\item
If $\kappa > D(\Omega)$, then there are exactly two phase-locked states, namely the stable state $\theta_1-\theta_2\equiv \sin^{-1} \Big(\frac{D(\Omega)}{\kappa} \Big)~~~\mod 2\pi$ and the unstable state $\theta_1-\theta_2\equiv \pi-\sin^{-1} \Big( \frac{D(\Omega)}{\kappa} \Big)~~~\mod 2\pi$. Every initial data except for the unstable phase-locked state converges to the stable phase-locked state.

\vspace{0.5cm}

\item 
If $\kappa = D(\Omega)$, then there is exactly one phase-locked state, namely the semistable state $\theta_1-\theta_2\equiv \frac{\pi}{2}~~~\mod 2\pi$. Every initial data converges to the semistable phase-locked state.

\vspace{0.5cm}

\item
If $\kappa <D(\Omega)$, there are no phase-locked states and thus phase-locking fails.
\end{itemize}
\end{example}
\vspace{0.3cm}
Hence, for the case $N=2$, there is a sharp threshold for the coupling strength $\kappa$ that determines whether the system \eqref{Ku} will exhibit asymptotic phase-locking or not. In fact, according to numerical simulations, this seems to be true even for general $N$: for a given initial configuration $\Theta^0$, as we increase the coupling strength $\kappa$ from zero to some large number compared to $D(\Omega)$, Kuramoto's phase vector $\Theta(t)$ tends to a phase locked state as $t \to \infty$. This threshold for $\kappa$ at which phase-locking emerges is commonly referred to as the critical coupling strength. More precisely, there are two subtly different formulations of the critical coupling strength, which we distinguish as the {\it pathwise critical coupling strength} and the {\it critical coupling strength}.
\begin{definition} \label{D2.3}
Let $N \geq 2$ and let $\Omega=(\nu_1,\cdots,\nu_N)\in\bbr^N$ be a natural frequency vector.
\begin{enumerate}
\item
For an initial phase vector $\Theta^0=(\theta_1^0,\cdots,\theta_N^0) \in [-\pi, \pi)^N$, the number $\kappa_{pc} = \kappa_{pc}(\Theta^0,\Omega, N)$ defined by 
\begin{align*}
\begin{aligned}
\kappa_{pc}(\Theta^0,\Omega, N) :=\inf \{\kappa_*>0:&~\mbox{for any } \kappa >\kappa_*,~\mbox{the solution to \eqref{Ku} with initial data $\Theta^0$ exhibits} \\
& \mbox{asymptotic phase-locking} \}
\end{aligned}
\end{align*}
 is called the ``{\it pathwise critical coupling strength}". 
 
\vspace{0.2cm} 
 
\item
The number $\kappa_c = \kappa_c(\Omega, N)$ defined by 
\[
\kappa_c(\Omega, N) :=\inf \{\kappa_*>0:~\mbox{for } \kappa >\kappa_*,~~\mbox{system \eqref{Ku} admits a phase-locked state} \}
\]
is called the ``{\it critical coupling strength}". 
\end{enumerate}
\end{definition}
\begin{remark}

1. The difference between the thresholds $\kappa_{c}(\Omega, N)$ and $\kappa_{pc}(\Theta^0,\Omega, N)$ is that the critical coupling strength $\kappa_{c}(\Omega, N)$ requires simply for a phase-locked state to exist, whereas the pathwise critical coupling strength $\kappa_{pc}(\Theta^0,\Omega, N)$ requires not only that phase-locked states exist, but also that the flow starting from the specific initial data $\Theta^0$ converges to some phase-locked state. We can trivially deduce that for a  given $N, \Omega$ and $\Theta^0$, the critical coupling strength is less than or equal to the pathwise critical coupling strength:
\[ \kappa_{c} (\Omega, N) \leq \kappa_{pc}(\Theta^0, \Omega, N). \]

2. Note that Example \ref{N=2}  yields 
\[ \kappa_{pc}(\Theta^0,\Omega,2)=D(\Omega) \quad and \quad \kappa_c(\Omega, 2) = D(\Omega).  \]
\end{remark}

\vspace{0.5cm}

The critical coupling strength $\kappa_c(\Omega, N)$ has been thoroughly investigated in the literature and is quite well understood. Much research \cite{A-R, C-S, D-B1, Er, J-M-B, M-St, V-M2} have been devoted to calculating or estimating $\kappa_{c}(\Omega, N)$ for several variations of the Kuramoto model \eqref{Ku}. For example, Ermentrout \cite{Er} dealt with the infinite-dimensional case, whereas Jadbabaie, Motee, and Barahona \cite{J-M-B} dealt with the Kuramoto model on general graphs. The exact computation of $\kappa_{c}(\Omega, N)$ for our model \eqref{Ku}, where there are finitely many particles and the communication weights are given by the all-to-all coupling, has been accomplished: for example, Verwoerd and Mason \cite{V-M2} have calculated that
\begin{equation*}\label{critical-0}
\kappa_{c}(\Omega, N)=\frac{Nu_*}{\sum_{i=1}^N \sqrt{1-\frac{\nu_i^2}{u_*^2}}},
\end{equation*}
where $u_*\in[\|\Omega\|_\infty,2\|\Omega\|_\infty]$ is the unique solution of
\begin{equation*}\label{critical-1}
2\sum_{i=1}^N \sqrt{1-\frac{\nu_i^2}{u_*^2}}=\sum_{i=1}^N 1\Big /\sqrt{1-\frac{\nu_i^2}{u_*^2}}.
\end{equation*}
A crude but insightful estimate is
\begin{equation} \label{New-3}
\frac{N}{2(N-1)}D(\Omega)\le \kappa_{c}(\Omega, N)\le D(\Omega).
\end{equation}
The first inequality in \eqref{New-3} is a special case of a more general result on general symmetric networks which can be found in \cite{J-M-B}. The second inequality in \eqref{New-3} is a direct consequence of Theorem \ref{T2.2} below. Further results on $\kappa_{c}(\Omega, N)$ can be found in the survey articles \cite{D-B2, H-K-P-Z}.

Compared to the critical coupling strength $\kappa_c(\Omega,N)$, the pathwise critical coupling strength $\kappa_{pc}(\Theta^0,\Omega,N)$ is poorly understood. We ask the following natural questions: \newline
\begin{quote}
\begin{itemize}
\item
Can we explicitly compute $\kappa_{pc}(\Theta^0, \Omega, N)$ in terms of $\Theta^0$, $\Omega$ and $N$?
\item
If not, can we find upper bounds for $\kappa_{pc}(\Theta^0,\Omega, N)$ in terms of $\Theta^0$, $\Omega$ and $N$?  
\end{itemize}
\end{quote}
Numerical simulations suggest an answer to the second question as follows.
\begin{conjecture}{\cite{Ha}}\label{MainConj}
The pathwise critical coupling strength $\kappa_{pc}(\Theta^0,\Omega, N)$ is uniformly bounded over  a.e. $\Theta^0$:
\begin{enumerate}
\item (Weak version): There is a universal constant $C \geq 1$, independent of $N$, $\Theta^0$ and $\Omega$ such that
\[
\kappa_{pc}(\Theta^0,\Omega, N)\le CD(\Omega)\quad \mbox{for a.e. }\Theta^0\mbox{ and }\Omega.
\]
\item (Strong version): The constant $C$ can be taken as unity, i.e., 
\[
\kappa_{pc}(\Theta^0,\Omega, N)\le D(\Omega)\quad \mbox{for a.e. }\Theta^0\mbox{ and }\Omega.
\]
\end{enumerate}
\end{conjecture}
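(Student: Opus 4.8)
The plan is to bootstrap the order-parameter growth estimate of Lemma \ref{L2.1} against the sufficient condition of Theorem \ref{T3.2}: heuristically, Lemma \ref{L2.1} gives $\dot R \ge 0$ once the configuration is spread out relative to $D(\Omega)/\kappa$, while Theorem \ref{T3.2} says that a single time-slice with $R^2 > 1.6\,D(\Omega)/\kappa$ already forces complete phase-locking. Granting both, the conjecture reduces to a measure-theoretic statement: for $\kappa > CD(\Omega)$, the set of initial vectors whose Kuramoto trajectory never attains such a value of $R$ is Lebesgue-null in $[-\pi,\pi)^N$; for the strong version one additionally needs this threshold to degenerate to the $N=2$ value $D(\Omega)/\kappa$ of Example \ref{N=2}.

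First I would make the reduction precise. Fix $\Omega$ with $\sum_i \nu_i = 0$ and $\kappa > CD(\Omega)$. By autonomy of the flow \eqref{Ku} and Theorem \ref{T3.2} applied to the flow restarted at a later time, it suffices to produce, for a.e.\ $\Theta^0$, a finite time $t_*$ with $R(\Theta(t_*))^2 > 1.6\,D(\Omega)/\kappa$. The forward-invariant ``exceptional'' set $\mathcal{E}\subset[-\pi,\pi)^N$ of initial data for which no such $t_*$ exists is what must be shown to be null. Along any trajectory in $\mathcal{E}$, Lemma \ref{L2.1} forces $R\sqrt{\Delta}\le D(\Omega)/(2\kappa)$ at every time with $\dot R\le 0$, so $R$ is confined below the threshold and the trajectory is trapped in a thin ``incoherent region'' in which $R$ or $\Delta$ stays small.

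Next I would analyze the asymptotics inside $\mathcal{E}$. If $\sup_t D(\Theta(t))<\infty$ along such a trajectory, then Proposition \ref{NoChaos} gives convergence to a phase-locked state $\Theta^\infty$ with $R(\Theta^\infty)^2\le 1.6\,D(\Omega)/\kappa\le 1.6/C$; choosing $C>1.6N$ makes $R(\Theta^\infty)<N^{-1/2}$, and then the linear instability of low-coherence equilibria proved in Section \ref{sec:6}, combined with the {\L}ojasiewicz gradient inequality for \eqref{Ku-grad}--\eqref{potential} (which supplies local stable-manifold structure for the analytic gradient flow), should give that the union of the basins of these equilibria is null --- but this only recovers the $N$-dependent bound $1.6ND(\Omega)$ already in the paper. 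If instead $D(\Theta(t))\to\infty$ along a trajectory in $\mathcal{E}$, then by Theorem \ref{T2.1} there are infinitely many collisions, and one must rule this out for a.e.\ $\Theta^0$ using the one-way nature of collisions (Remark \ref{R2.3}) and the conservation law \eqref{conservation-zero}.

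Two obstacles remain, and both are genuine. First, the ``eternally drifting'' scenario $D(\Theta(t))\to\infty$ is exactly the hard transition ``incoherent state $\Rightarrow$ partial phase-locking'' of \eqref{situation}, which the paper resolves only under the geometric hypothesis of Theorem \ref{T3.1}; ruling it out for a.e.\ data with no such hypothesis seems to require a new a priori estimate --- for instance a Lyapunov functional bounding $\int_0^\infty \Delta(t)\,dt$ or the total number of collisions. Second, to obtain $C$ independent of $N$ --- the actual content of the conjecture --- one cannot wait until $R$ falls below $N^{-1/2}$: one needs an $N$-uniform mechanism showing that configurations with $R$ below some absolute constant are dynamically repelling once $\kappa>CD(\Omega)$ (Lemma \ref{L2.1} alone only yields $\dot R\ge 0$ once $R\sqrt{\Delta}$ already exceeds $D(\Omega)/(2\kappa)$, which for small $R$ needs $\Delta$ bounded below, itself unclear), and for the strong version $C=1$ a sharp planar analysis of the pair $(R,\Delta)$ reproducing the $N=2$ dichotomy of Example \ref{N=2}. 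The absence of such $N$-uniform control is precisely why the statement is recorded only as a conjecture.
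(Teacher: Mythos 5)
This statement is recorded in the paper as a \emph{conjecture}, and the paper offers no proof of it; the closest it comes are the partial results you correctly identify, namely the $R_0$-dependent bound $\kappa_{pc}\le 1.6\,D(\Omega)/R_0^2$ (Theorem \ref{T3.2}) and the $N$-dependent bound $\kappa_{pc}\le 1.6N\,D(\Omega)$ (Theorem \ref{T3.3}), together with the roadmap in Section \ref{sec:7} explaining exactly which arrow is missing. Your proposal is not a proof and does not claim to be one; as a diagnosis it is essentially the same as the authors' own: the sole missing ingredient is an $N$-uniform mechanism forcing $R$ above an absolute constant for a.e.\ initial datum, since the divergence $\nabla\cdot F=\kappa(1-NR^2)$ is only positive below the $N$-dependent threshold $R<N^{-1/2}$. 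Your two stated obstacles are genuine and are precisely why the statement remains open.

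One substantive divergence from the paper's machinery is worth flagging. To dispose of trajectories trapped at low coherence you propose linearizing at low-$R$ equilibria and invoking stable-manifold structure from the {\L}ojasiewicz/analytic gradient framework to conclude their basins are null. The paper instead uses a global Liouville-type volume-expansion argument (Proposition \ref{nodispersed}): the flow strictly expands Lebesgue volume on $\{R<R_*\}$ for $R_*<N^{-1/2}$, so the set of initial data confined there forever is null, with no need to locate or classify equilibria at all. Your route is weaker as stated: Proposition \ref{P6.1} only gives a positive trace, hence one unstable eigenvalue; without ruling out continua of non-hyperbolic equilibria and without controlling trajectories that neither converge nor stay bounded, ``union of basins is null'' does not follow from local stable manifolds alone. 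The volume argument also handles the ``eternally drifting'' trajectories you worry about, since it makes no boundedness assumption --- but only at the $N$-dependent threshold, which returns you to Theorem \ref{T3.3} rather than the conjecture.
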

\begin{remark}
1. The reason why we say ``for a.e. $\Theta^0$ and $\Omega$" is because there are measure-zero counterexamples for which $\kappa_{pc}(\Theta^0,\Omega,N)=\infty$. See Example \ref{NoSync}.

\noindent 2. The advantage of an $N$-independent upper bound is that it remains valid in the mean-field limit $N \to \infty$ and so we can lift analytical results on the finite-dimensional system to the kinetic regime.

\noindent 3. See \cite{Ha} for related context behind Conjecture \ref{MainConj}, as well as its relation to different consensus models.
\end{remark}

\vspace{0.2cm}

Some research has provided estimates for $\kappa_{pc}$ for initial data near phase-locked states by investigating their linear and nonlinear stability. There is one phase-locked state that is particularly well-behaved. For the case $N=2$, we have seen in Example \ref{N=2} that when $\kappa \ge D(\Omega)$, there is a phase-locked state that is distributed on an arc of length $\sin^{-1} \Big( \frac{D(\Omega)}{\kappa} \Big)$ which has as basin of attraction the entire domain minus some measure-zero set; for arbitrary $N\ge 2$ and for $\kappa\ge D(\Omega)$, there is a similar phase-locked state on an arc of length $\sin^{-1} \Big( \frac{D(\Omega)}{\kappa} \Big)$ which has a large basin of attraction. For several variations of the Kuramoto model, this phase-locked state has been studied in \cite{A-R, B-N-S,  C-H-J-K, D-A, J-M-B, M-S1, M-S2, V-W} using mathematical tools like Lyapunov functionals, spectral graph theory, and control theory. In particular, the results in Chopra and Spong \cite{C-S}, Choi et al. \cite{C-H-J-K}, D{\"o}rfler and Bullo \cite{D-B1}, Ha, Kim and Park \cite{H-K-P}, and Ha et al.\cite{H-K-R} are the most relevant to our setting \eqref{Ku}. These papers employed the phase diameter $\displaystyle D(\Theta)$ as a Lyapunov functional, studied its temporal evolution, and, via a nonlinear Gronwall inequality for $D(\Theta)$, derived asymptotic phase-locking  for some restricted class of initial phase vectors confined in some arc of the circle. We summarize the important relevant results in the following theorem.
\begin{theorem} \label{T2.2}
\emph{\cite{C-H-J-K,  D-B1, D-B2,H-K-P}} Suppose that the coupling strength $\kappa$ and an auxiliary parameter $\kappa_e$ satisfy
\[ \kappa > \kappa_e> D(\Omega) \]
and let $\Theta = \Theta(t)$ be a solution to \eqref{Ku} such that there exists a positive time
$T \in (0, \infty)$ such that
\[ D(\Theta(T)) < \pi - {{ \arcsin \left (\frac{D(\Omega)}{\kappa_e} \right)}}. \]
Then, the following assertions hold:
\begin{enumerate}
\item
The phase diameter is bounded: there exists a finite time $T'\ge T$ such that
\[
D(\Theta(t))\le \arcsin \left (\frac{D(\Omega)}{\kappa} \right),\quad for~t\ge T'.
\]
\item
The phase vector $\Theta(t)$ approaches a phase-locked state $\Theta^\infty$ at exponential rate: there exist positive
constants $C_0(T)$ and $\Lambda = {\mathcal O}(\kappa)$ such that
\[ D({\dot \Theta}(t)) \leq C_0 \exp(-\Lambda (t-T)), \quad \mbox{for } t\ge T.\]
\item
The emergent phase-locked state $\Theta^\infty$ is unique up to $U(1)$-symmetry, and is ordered according to the ordering of their natural frequencies: there are constants $U$ and $L$ such that for any indices $i,j$ with $\nu_i\ge \nu_j$,
\[
\sin^{-1}\left(\frac{\nu_i-\nu_j}{\kappa U}\right)\le \theta_i^\infty-\theta_j^\infty \le \sin^{-1}\left(\frac{\nu_i-\nu_j}{\kappa L}\right).
\]
\end{enumerate}
\end{theorem}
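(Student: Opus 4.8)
The plan is a three-stage Lyapunov argument: first confine the phase diameter $D(\Theta)$ to an arc shorter than $\tfrac{\pi}{2}$ using the entry condition at time $T$ and the slack $\kappa>\kappa_e$; then, in that confined regime, prove exponential contraction of the frequency diameter $D(\dot\Theta)$; and finally read off uniqueness and the ordering of $\Theta^\infty$ from the limiting equilibrium relation. For the first stage I would, for each $t$, pick indices $M=M(t)$, $m=m(t)$ with $\theta_M=\max_i\theta_i$, $\theta_m=\min_i\theta_i$; then $D(\Theta)=\theta_M-\theta_m$ is Lipschitz and, by the standard fact about maxima of finitely many smooth functions, $\frac{d}{dt}D(\Theta)=\dot\theta_M-\dot\theta_m$ for a.e.\ $t$. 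Subtracting the two copies of \eqref{Ku} and using the identity
\[
\sin(\theta_j-\theta_M)-\sin(\theta_j-\theta_m)=-2\sin\tfrac{D(\Theta)}{2}\cos\!\Big(\theta_j-\tfrac{\theta_M+\theta_m}{2}\Big)\le-2\sin\tfrac{D(\Theta)}{2}\cos\tfrac{D(\Theta)}{2},
\]
valid because each $\theta_j$ lies between $\theta_m$ and $\theta_M$ and $D(\Theta)<\pi$, yields the scalar differential inequality $\frac{d}{dt}D(\Theta)\le D(\Omega)-\kappa\sin D(\Theta)$.

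Next I would compare $D(\Theta)$ with the scalar ODE $\dot d=D(\Omega)-\kappa\sin d$. The role of the auxiliary threshold $\kappa_e$ is to furnish a \emph{strict} upper barrier at $d^{*}:=\pi-\arcsin(D(\Omega)/\kappa_e)$: there $D(\Omega)-\kappa\sin d^{*}=D(\Omega)(1-\kappa/\kappa_e)<0$. Since $D(\Theta(T))<d^{*}$ by hypothesis, $D(\Theta(t))$ can never reach $d^{*}$ and hence stays in $[0,d^{*})\subset[0,\pi)$ for $t\ge T$. On the compact band $[\arcsin(D(\Omega)/\kappa_e),\,d^{*}]$ the right-hand side is $\le D(\Omega)(1-\kappa/\kappa_e)<0$, so $D(\Theta)$ decreases at a uniform rate and falls below $\arcsin(D(\Omega)/\kappa_e)$ in finite time; one more comparison on $[\arcsin(D(\Omega)/\kappa)+\varepsilon,\arcsin(D(\Omega)/\kappa_e)]$, where $\sin d>D(\Omega)/\kappa$, pushes $D(\Theta)$ into any prescribed neighborhood of the stable equilibrium $\arcsin(D(\Omega)/\kappa)$ of the comparison ODE. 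This produces the finite time $T'$ and the bound in (1).

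For (2), observe that for $t\ge T'$ all phases sit in an arc of length $\le\arcsin(D(\Omega)/\kappa)<\tfrac{\pi}{2}$, so $\cos(\theta_i-\theta_j)\ge\cos(\arcsin(D(\Omega)/\kappa))=\sqrt{1-(D(\Omega)/\kappa)^2}=:c_0>0$ for all $i,j$. Differentiating \eqref{Ku} gives $\ddot\theta_i=\frac{\kappa}{N}\sum_j\cos(\theta_j-\theta_i)(\dot\theta_j-\dot\theta_i)$; running the extremal-index argument again with $M,m$ now realizing $\max_i\dot\theta_i$, $\min_i\dot\theta_i$, and using $\cos(\theta_j-\theta_M)(\dot\theta_j-\dot\theta_M)\le c_0(\dot\theta_j-\dot\theta_M)$ (as $\dot\theta_j-\dot\theta_M\le0$) together with the mirror inequality at $m$, I obtain $\frac{d}{dt}D(\dot\Theta)\le-\kappa c_0\,D(\dot\Theta)$ for $t\ge T'$, hence $D(\dot\Theta(t))\le D(\dot\Theta(T'))e^{-\Lambda(t-T')}$ with $\Lambda=\kappa c_0=\sqrt{\kappa^2-D(\Omega)^2}=\mathcal{O}(\kappa)$; since $D(\dot\Theta)$ is bounded on the compact interval $[T,T']$, enlarging the prefactor to some $C_0(T)$ extends this to all $t\ge T$. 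For (3), exponential decay of $\dot\Theta$ makes $\Theta(t)$ Cauchy, so $\Theta(t)\to\Theta^\infty$, a phase-locked state (in agreement with Proposition \ref{NoChaos}). Letting $t\to\infty$ in \eqref{Ku-mf} gives $\nu_i=\kappa R^\infty\sin(\theta_i^\infty-\phi^\infty)$ with $R^\infty>0$ and each $\theta_i^\infty-\phi^\infty$ in an arc of length $<\tfrac{\pi}{2}$; strict monotonicity of $\sin$ there determines $\theta_i^\infty-\phi^\infty$ (hence $\theta_i^\infty$, up to the global $U(1)$-shift by $\phi^\infty$) from $\nu_i$, and shows $\nu_i\ge\nu_j\Rightarrow\theta_i^\infty\ge\theta_j^\infty$. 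Subtracting the equilibrium relations for $i$ and $j$ and using sum-to-product, $\nu_i-\nu_j=\kappa c_{ij}\sin(\theta_i^\infty-\theta_j^\infty)$ with $c_{ij}=\big(\tfrac1N\sum_k\cos(\tfrac{\theta_i^\infty+\theta_j^\infty}{2}-\theta_k^\infty)\big)\big/\cos\tfrac{\theta_i^\infty-\theta_j^\infty}{2}$ positive and bounded between constants depending only on $\Theta^\infty$; solving $\theta_i^\infty-\theta_j^\infty=\arcsin\!\big(\tfrac{\nu_i-\nu_j}{\kappa c_{ij}}\big)$ (legitimate as the left side lies in $[0,\tfrac{\pi}{2})$) and taking $L=\min_{i,j}c_{ij}$, $U=\max_{i,j}c_{ij}$ yields the two-sided estimate.

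The main obstacle is the confinement in Step 1, and inside it the necessity of the slack $\kappa>\kappa_e$: since $\sin$ is not monotone on $[0,\pi]$, $\frac{d}{dt}D(\Theta)$ can fail to be negative near $0$ and near $\pi$, and only a barrier placed strictly below $\pi-\arcsin(D(\Omega)/\kappa)$ --- namely $d^{*}=\pi-\arcsin(D(\Omega)/\kappa_e)$ --- keeps $D(\Theta)$ from drifting toward $\pi$. Two further minor points deserve care: justifying the a.e.\ identity $\frac{d}{dt}D(\Theta)=\dot\theta_M-\dot\theta_m$ when the extremal indices switch, and the fact that $\arcsin(D(\Omega)/\kappa)$ is only an asymptotically stable equilibrium of the comparison ODE, so statement (1) is to be read with an arbitrarily small slack or as sharpened by the contraction in (2). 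Everything downstream of confinement is routine comparison/Gronwall estimation.
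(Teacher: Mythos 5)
The paper does not prove Theorem \ref{T2.2}; it is quoted from \cite{C-H-J-K, D-B1, D-B2, H-K-P} without proof, so there is no internal argument to compare against. Your proposal is the standard one, and it is also exactly the machinery the paper itself deploys for its generalization: your diameter inequality $\frac{d}{dt}D(\Theta)\le D(\Omega)-\kappa\sin D(\Theta)$ is the $\gamma=1$, $\mathcal{B}=\mathcal{N}$ case of Lemma \ref{gronwall0} (where $f(\theta)=\sin\theta$), your barrier/exit-time comparison mirrors the proof of Theorem \ref{T3.1}(1), and your treatment of the ordering in part (3) parallels the computation \eqref{individualdiff}--\eqref{gronwall6}. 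Parts (1) and (2) are correct as argued, including your honest caveat that the literal bound $\arcsin(D(\Omega)/\kappa)$ in (1) should carry an arbitrarily small slack (or be read asymptotically), and the frequency-diameter contraction with $\Lambda=\sqrt{\kappa^2-D(\Omega)^2}$ is sound.

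The one genuine soft spot is the uniqueness claim in part (3). Writing $\nu_i=\kappa R^\infty\sin(\theta_i^\infty-\phi^\infty)$ and invoking strict monotonicity of $\sin$ on $(-\tfrac{\pi}{2},\tfrac{\pi}{2})$ determines the configuration \emph{given} $R^\infty$, but $R^\infty$ is itself a functional of the unknown configuration: two distinct quarter-circle-confined equilibria could a priori carry different values of $R^\infty$. To close this you must show that the consistency equation $R=\frac{1}{N}\sum_i\sqrt{1-\nu_i^2/(\kappa R)^2}$ has a unique admissible root in the relevant range (this is the content of the Aeyels--Rogge / Verwoerd--Mason analysis, and in general that equation has two roots, only one of which corresponds to the stable confined state). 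Your two-sided estimate with $L=\min_{i,j}c_{ij}$ and $U=\max_{i,j}c_{ij}$ is fine, though note that as stated these constants depend on $\Theta^\infty$ itself; the cited results make them depend only on $\gamma$-type data, as the paper does in Theorem \ref{T3.1}(3) via $\gamma\cos\phi_1-(1-\gamma)$. Everything else is routine and correct.
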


\vspace{0.5cm}

In summary, Theorem \ref{T2.2} says that as long as $\kappa >D(\Omega)$, there is a unique phase-locked state confined in the open quarter-circle, which has a basin of attraction consisting of initial phase vectors distributed on an open arc of length $\left[\pi - {{ \arcsin \left (\frac{D(\Omega)}{\kappa} \right)}}\right]$. However, numerical simulations suggest that this basin of attraction is much larger, and, for sufficiently large $\kappa$, is practically the entire unit circle. Thus, we formulate our second conjecture as follows:
\begin{conjecture}\label{AuxConj}
There is a universal constant $C\ge 1$ such that whenever $\kappa >CD(\Omega)$, any generic initial phase vector $\Theta^0$ converges to the phase-locked state given in Theorem \ref{T2.2}.
\end{conjecture}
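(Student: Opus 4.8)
The plan is to reduce Conjecture \ref{AuxConj} to one quantitative assertion about the amplitude order parameter: \emph{if $\kappa$ exceeds a sufficiently large universal multiple of $D(\Omega)$, then for every initial datum outside a Lebesgue-null set the Kuramoto flow eventually satisfies $R(t)\ge R_\ast$ for some universal constant $R_\ast>0$.} Granting this, one fixes a time $t_0$ with $R(t_0)\ge R_\ast$ and applies Theorem \ref{T3.2}, the $1.6\,D(\Omega)/R_0^2$ bound, to the flow restarted at $t_0$: as soon as $\kappa>1.6\,D(\Omega)/R_\ast^2$ the restarted flow undergoes complete phase-locking, and for generic data its limiting state is the distinguished quarter-circle phase-locked state of Theorem \ref{T2.2}. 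Thus the conjecture follows with $C=1.6/R_\ast^2$, and all of the work is in producing a universal floor $R_\ast$ for the asymptotic order parameter. The hypothesis ``generic'' is unavoidable here: the antipodal-type configurations of Example \ref{NoSync} are precisely the measure-zero trajectories for which no such $R_\ast$ exists.

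As a preliminary step I would dispose of the small-$R$ regime using the paper's linear-instability result for the region $\{R<N^{-1/2}\}$. Since \eqref{Ku-grad} is a gradient flow with real-analytic potential, every bounded trajectory converges to an equilibrium by Proposition \ref{NoChaos}; combining the {\L}ojasiewicz gradient inequality with the instability of $\{R<N^{-1/2}\}$ shows that the set of initial data whose limiting equilibrium has order parameter below $N^{-1/2}$ is contained in the stable set of unstable equilibria and is therefore Lebesgue-null. So, off a null set, $R(t)\ge N^{-1/2}$ for all large $t$; taking $R_\ast=N^{-1/2}$ in the reduction above already reproves the $\kappa>1.6N D(\Omega)$ threshold, and the point of the conjecture is to replace the $N$-dependent floor $N^{-1/2}$ by an absolute constant.

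To make that replacement, the natural route is a perturbative comparison with the identical-oscillator model, in the spirit of \cite{H-K-R}. When $\kappa\gg D(\Omega)$, \eqref{Ku-grad} is an $O(D(\Omega))$ perturbation of the gradient flow of the single functional $\tfrac{\kappa N}{2}(1-R^2)$, whose equilibria are exactly the bipolar configurations (a fraction $p$ of oscillators at $\phi$, the remaining fraction at $\phi+\pi$, with $R=|2p-1|$) together with the incoherent set $\{R=0\}$; among all of these only the fully synchronized state $R=1$ is stable — as in Remark \ref{R2.3}, the group at $\phi+\pi$ is repelled whenever $R>0$ — and every other equilibrium has a Lebesgue-null stable set. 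One would then (a) classify these equilibria and their unstable directions; (b) prove a \emph{uniform-in-$N$} escape estimate, namely an $N$-free lower bound on the growth rate of $R$ whenever the configuration lies at distance $\ge\varepsilon$ from every bipolar and incoherent equilibrium — equivalently a uniform {\L}ojasiewicz exponent for $1-R^2$ transverse to this equilibrium set; and (c) run a Gronwall/continuity argument, using Lemma \ref{L2.1} in the favorable range of $R$, to conclude that the $D(\Omega)$-sized perturbation cannot confine a generic trajectory to the union of $\varepsilon$-neighborhoods of those equilibria, so that $R$ is driven up to a fixed level such as $R_\ast=\tfrac12$ in finite time. Alternatively, once a trajectory has escaped the incoherent regime a controlled majority of oscillators is already clustered, and one may instead invoke Theorem \ref{T3.1} to bound $R$ below by a constant.

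Step (b) is the genuine obstacle, and the reason the statement is still a conjecture: the available instability estimates near the bipolar and incoherent configurations decay like $N^{-1}$ — which is exactly what forces the $N D(\Omega)$ threshold in the first place — so one needs an $N$-independent lower bound on the \emph{effective} instability rate of these configurations, presumably obtained by measuring instability against the scalar dynamics \eqref{order-dyn} for $(R,\phi)$ rather than against the full $N$-dimensional phase dynamics, and by exploiting the all-to-all coupling. A secondary technical point is that the argument uses the a priori bound $\sup_{t}D(\Theta(t))<\infty$ so that Proposition \ref{NoChaos} and Theorem \ref{T2.1} apply; for large $\kappa$ this should follow once $R$ is bounded below, since the mean-field form \eqref{Ku-mf} then traps all but a controlled set of oscillators in a small arc about the mean phase $\phi$.
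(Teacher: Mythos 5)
You should first be aware that the paper does not prove this statement: Conjecture \ref{AuxConj} is posed as an open problem and explicitly deferred to future work in Section \ref{sec:7}. Your proposal is accordingly framed as a reduction rather than a proof, and the reduction you describe --- establish a universal floor $R_\ast>0$ for the eventual order parameter off a null set, then invoke Theorem \ref{T3.2} to get the constant $C=1.6/R_\ast^2$ --- is essentially the roadmap the authors themselves sketch in the Conclusion. You also locate the obstruction in the same place they do: the known instability estimates for the dispersed region scale like $N^{-1}$, which is exactly what produces the $1.6ND(\Omega)$ threshold of Theorem \ref{T3.3}, and no one knows how to make them $N$-free. So the proposal is an accurate strategy outline with an honestly acknowledged gap at its central step; it is not, and does not claim to be, a proof.

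Two further points deserve flagging. First, even granting the universal floor $R_\ast$, your reduction does not deliver the conjecture as stated. Theorem \ref{T3.2} yields complete phase-locking together with a stable $\gamma(R_\ast)$-ensemble, but $\gamma(R_\ast)\le\tfrac{1+R_\ast}{2}<1$ (Remark \ref{limitations}), so the minority oscillators are not forced into the quarter-circle and the limiting configuration need not be the distinguished phase-locked state of Theorem \ref{T2.2}. The paper singles this out as a \emph{separate} open issue in Section \ref{sec:7} (one would need $\gamma=1$ above some threshold, not merely $\gamma>\tfrac12$), and closes it only for $N=3$, where $\gamma_3>\tfrac23$ forces the stable ensemble to be all of $\mathcal{N}$ (Corollary \ref{C3.3}). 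Second, your disposal of the small-$R$ regime via ``the stable set of unstable equilibria is Lebesgue-null'' is shakier than what the paper actually does: the equilibria in $\{R<N^{-1/2}\}$ need not be hyperbolic or isolated, so stable-manifold reasoning requires care, whereas Proposition \ref{nodispersed} reaches the same conclusion by a Liouville-type divergence computation, valid for every $\kappa>0$ and with no spectral hypotheses. Neither point changes the verdict: the statement remains open, and the genuine missing ingredient is the $N$-independent escape rate you call step (b), with the $\gamma<1$ issue a second gap you would still need to close afterwards.
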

\noindent Again we say `generic' because of Example \ref{NoSync} and of the possible existence of unstable phase-locked states.

Asymptotic phase-locking has been well established for the case of identical oscillators, i.e., $D(\Omega) = 0$. In fact, Dong and Xue \cite{D-X} showed that asymptotic phase-locking is attained for all initial data with $D(\Theta^0) < 2\pi$. The idea of the proof is that, in the context of Theorem \ref{T2.1}, there are no collisions. Furthermore, Watanabe and Strogatz \cite{W-S} were able to construct $(N-3)$ constants of motion to transform system \eqref{Ku} into an $(N-3)$-parameter family of two-dimensional dynamics (\eqref{Ku} is itself an $(N-1)$-dimensional system if we only care about the phase differences), and were thus able to describe the detailed relaxation process as well as the emergent phase-locked state.

However, these methods fail in the case of nonidentical oscillators $D(\Omega) > 0$; it seems that in this case asymptotic phase-locking is a more subtle matter. In particular, for $N\ge 4$ there are pathological examples with $R(t)\equiv 0$ that cannot synchronize even for arbitrarily large $\kappa$:

\begin{example}\label{NoSync}
\emph{(Non-synchronizing Kuramoto flow)}
For $N\ge 4$, we have non-synchronizing choices for $\Theta^0$ and $\Omega$. For example, consider the system
\begin{align*}
\begin{aligned}
& N=4,\quad \nu_1=\nu_2\neq \nu_3=\nu_4,\cr
& 
\theta_1=\nu_1 t, \quad \theta_2=\nu_1 t+\pi, \quad 
\theta_3=\nu_3 t,\quad \theta_4=\nu_3 t+\pi.
\end{aligned}
\end{align*}
Then the pairs ($\theta_1$, $\theta_2$) and ($\theta_3$, $\theta_4$) each form a configuration with zero centroid that rotate independently of each other. The order parameter $R$ of the entire configuration is always zero:
\[  R = \frac{1}{N} (e^{{\mathrm i} \nu_1 t} +  e^{{\mathrm i} \nu_1 t + \pi {\mathrm i}} +  e^{{\mathrm i} \nu_3 t} +  e^{{\mathrm i} \nu_3 t + \pi {\mathrm i}} ) = 0, \quad t \geq 0. \]
So the nonlinear interaction term of \eqref{Ku-mf} vanishes, no matter how large the coupling $\kappa$ is; hence the above system is a solution to \eqref{Ku-mf} and thus also to \eqref{Ku}. Therefore, this solution fails to achieve asymptotic phase-locking. Similarly, we can construct analogous examples for $N\ge 5$ by superposing two or more configurations, each with zero centroid, which are rotating independently.
\end{example}

\noindent This is particularly discouraging because this precludes the possibility of constructing a nontrivial Riemannian metric on $\bbt^N$ which makes \eqref{Ku} into a gradient potential flow on $\bbt^N$. Hence for the nonidentical case $D(\Omega)>0$, in order to prove Conjecture \ref{MainConj} we would need to use new tools fundamentally different from those used in the identical case $D(\Omega)=0$. 

One promising direction in the nonidentical case seems to study the dynamics of the order parameters: recently Ha et al. \cite{H-K-P} extended the basin of attraction of the phase-locked state of Theorem \ref{T2.2}. Namely, for initial data $\Theta^0$ with positive initial order parameter $R_0>0$, with $|\theta_i^0-\phi^0|<\frac{\pi}{2}+\varepsilon$ where $\varepsilon>0$ is sufficiently small, and with sufficiently large coupling $\kappa \gg D(\Omega)$,  asymptotic phase-locking towards the phase-locked state of Theorem \ref{T2.2} will occur. Although it was not able to treat generic initial data $\Theta^0$, it was the first, to the authors' knowledge, to extend the basin of attraction beyond configurations confined in a half-circle.

Asymptotic phase-locking for generic initial data was first obtained by Ha et al. \cite{H-K-R} in a sufficiently large coupling regime. An important piece of the argument of \cite{H-K-R} is that some statements of Theorem \ref{T2.2} can be extended to the case where we have only a majority, not the totality, of the population lying on a small arc:
\begin{proposition}{\cite{H-K-R}}\label{P2.2}
Suppose that the initial configuration $\Theta^0$ and parameters $n_0\in \bbn$, and $\ell, \kappa \in \bbr$ satisfy
\begin{align*}
\begin{aligned}
&\frac{1}{2} < \gamma_0 := \frac{n_0}{N} \leq 1, \quad \ell \in\left(0,2\cos^{-1} \Big( \frac{1}{\gamma_0} - 1 \Big) \right), \quad \theta^0_{j}\in[-\pi,\pi),\quad 1\leq j\leq N, \\
& \max_{1\leq j,k\leq n_0}|\theta_{j0}-\theta_{k0}|< \ell, \quad  \kappa > \frac{D(\Omega)}{\gamma_0 \sin \ell -2 (1-\gamma_0) \sin \frac{\ell}{2}}.
\end{aligned}
\end{align*}
Then, for any solution $\Theta$ to \eqref{Ku}, we have
\[
\sup_{0\leq t<\infty}D(\Theta(t))\leq 4\pi+ \ell \quad \mbox{and} \quad \lim_{t\rightarrow\infty}\|\dot{\Theta}(t)\|_\infty=0.
\]
\end{proposition}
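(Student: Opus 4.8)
The plan is to (i) confine the designated ensemble $\mathcal{A}:=\{1,\dots,n_0\}$, (ii) convert this into a uniform positive lower bound on the order parameter $R$, (iii) use that lower bound to obtain the global diameter bound, and (iv) invoke the gradient-flow structure to finish.

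For (i), I would run the nonlinear Gronwall mechanism underlying Theorem~\ref{T2.2}, now applied to the sub-diameter $D_\mathcal{A}(t):=D(\Theta_\mathcal{A}(t))$ rather than to the full diameter. At a time where the extremal pair of $\mathcal{A}$ is realized by indices $M,m$ with midpoint $\psi:=\tfrac12(\theta_M+\theta_m)$, the identity $\sin(\theta_k-\theta_M)-\sin(\theta_k-\theta_m)=-2\sin(D_\mathcal{A}/2)\cos(\theta_k-\psi)$ together with $\cos(\theta_k-\psi)\ge\cos(D_\mathcal{A}/2)$ for $k\in\mathcal{A}$, $\cos(\theta_k-\psi)\ge-1$ for $k\notin\mathcal{A}$, and $2\sin(x/2)\cos(x/2)=\sin x$ gives, for the upper Dini derivative,
\[
\frac{d}{dt}D_\mathcal{A}\le D(\Omega)-\kappa\big(\gamma_0\sin D_\mathcal{A}-2(1-\gamma_0)\sin(D_\mathcal{A}/2)\big).
\]
Writing $f(D):=\gamma_0\sin D-2(1-\gamma_0)\sin(D/2)$, one checks $f(0)=0$, $f'(0)=2\gamma_0-1>0$, and that the only other zero of $f$ on $[0,\pi]$ is $D=2\cos^{-1}(\tfrac1{\gamma_0}-1)$, so $f>0$ on $\big(0,2\cos^{-1}(\tfrac1{\gamma_0}-1)\big)$; since $\ell$ lies in this interval and $\kappa f(\ell)>D(\Omega)$, the right-hand side above is strictly negative at $D_\mathcal{A}=\ell$, and as $D_\mathcal{A}(0)<\ell$ a standard continuity/barrier argument yields $D_\mathcal{A}(t)<\ell$ for all $t\ge0$. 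For (ii), $D_\mathcal{A}(t)<\ell$ forces every $k\in\mathcal{A}$ to lie within $\ell/2<\pi/2$ of $\psi(t)$, so $R(t)\ge\mathrm{Re}\big(e^{-\mathrm i\psi}\tfrac1N\sum_je^{\mathrm i\theta_j}\big)\ge\gamma_0\cos(\ell/2)-(1-\gamma_0)=:R_-$, and the hypothesis $\ell<2\cos^{-1}(\tfrac1{\gamma_0}-1)$ is precisely the statement $R_->0$. Moreover $f(\ell)=2\sin(\ell/2)R_-$ and $f(\ell)\le\gamma_0$, so the coupling hypothesis sharpens to $\kappa\gamma_0>D(\Omega)\ge\|\Omega\|_\infty$ (the last inequality using the Galilean normalization $\sum_i\nu_i=0$).

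The crux is (iii): turning $R(t)\ge R_->0$ into $\sup_tD(\Theta(t))\le4\pi+\ell$. The subtlety is that in the nonidentical case a minority oscillator can genuinely drift for a while — this is the mechanism behind the $R\equiv0$ examples — so one cannot pin every $\theta_i$ near $\phi$. The idea is to exploit that the confined ensemble $\mathcal{A}$ acts as a heavy anchor of mass fraction $\gamma_0$ with restoring strength $\kappa\gamma_0>\|\Omega\|_\infty$: using the mean-field form $\dot\theta_i=\nu_i-\kappa R\sin(\theta_i-\phi)$ together with a bound on $\dot\phi$ (available since $R\ge R_-$, via \eqref{order-dyn}), one shows that no oscillator can perform an extra full revolution relative to $\mathcal{A}$ — equivalently, via the one-way collision property of Remark~\ref{R2.3}, that each pair of oscillators collides only finitely often. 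Since initially every $\theta_i^0\in[-\pi,\pi)$ lies within $2\pi$ of the arc of $\mathcal{A}$, this keeps each $\theta_i$ within $2\pi$ of that arc for all time, so $D(\Theta)\le D(\Theta_\mathcal{A})+4\pi<\ell+4\pi$. I expect this winding/collision bound to be the main obstacle, because ruling out the drift scenario really uses the coefficient $\gamma_0$ (not merely $\kappa>D(\Omega)$) and the value $R_-$ in a quantitative way, and the extremal oscillators of the whole configuration change identity in time, so the estimate must be organized carefully (e.g., via the Lipschitz functions $t\mapsto\min_i\theta_i$, $t\mapsto\max_i\theta_i$ and their Dini derivatives).

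Finally, for (iv): once $\sup_tD(\Theta(t))\le4\pi+\ell<\infty$ is established, the conservation law $\sum_i\theta_i(t)=\sum_i\theta_i^0$ (valid after the Galilean reduction) gives $\sup_t\|\Theta(t)\|_\infty<\infty$ by Remark~\ref{R2.2}, so Proposition~\ref{NoChaos} applies: $\Theta(t)$ converges to a phase-locked state and $\|\dot\Theta(t)\|_\infty\to0$, which is the second assertion.
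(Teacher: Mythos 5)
Your steps (i), (ii) and (iv) are essentially the paper's own: the Gronwall inequality for $D(\Theta_{\mathcal A})$ and the barrier argument are exactly Lemma \ref{gronwall0} and part (1) of the proof of Theorem \ref{T3.1} (which generalizes this proposition), and the final appeal to the conservation law and Proposition \ref{NoChaos} is also how the paper concludes. (The lower bound $R\ge R_-$ in step (ii) is correct but turns out not to be needed.) The problem is step (iii), which you yourself flag as the main obstacle and do not actually carry out; moreover, the route you sketch would not close the gap. The hypotheses only guarantee $\kappa R_->\frac{D(\Omega)}{2\sin(\ell/2)}$, which for $\ell$ near $\pi$ is barely above $D(\Omega)/2$, whereas after the Galilean normalization a minority oscillator may have $|\nu_i|$ as large as $\frac{N-1}{N}D(\Omega)$; so $\kappa R\sin(\theta_i-\phi)$ need not dominate $\nu_i$, and no pointwise no-winding estimate follows from the mean-field form $\dot\theta_i=\nu_i-\kappa R\sin(\theta_i-\phi)$, however well you control $\dot\phi$. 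Likewise, ``each pair collides only finitely often'' cannot serve as an intermediate step: by Theorem \ref{T2.1} it is \emph{equivalent} to the asymptotic phase-locking you are trying to establish.

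The mechanism that actually works (the one used in the proof of Theorem \ref{T3.1}(2)) requires no control of $R$ or $\dot\phi$ at all. For each $i\notin\mathcal{A}$, dichotomize: either there exist a time $T$ and an integer $n$ with $\theta_i(T)\in[\theta_{\min}(T)+2n\pi,\theta_{\max}(T)+2n\pi]$, where $\theta_{\min},\theta_{\max}$ are the extremes of $\Theta_{\mathcal A}$, or this never happens. In the first case, apply your step (i) at time $T$ to the \emph{enlarged} set $\mathcal{A}\cup\{i\}$ with $\theta_i$ shifted by $2n\pi$: it is still a $\gamma_0$-ensemble of arclength $\le\ell$ satisfying the same hypotheses, so its diameter stays $\le\ell$ forever and $\theta_i$ is permanently absorbed into the arc. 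In the second case, $\theta_i$ is confined for all time to a single gap between consecutive $2\pi$-translates of the arc, since it would have to touch the arc to change gaps. Because $\theta_j^0\in[-\pi,\pi)$ bounds the initial relative winding, either alternative keeps every oscillator within $2\pi$ of the arc of $\mathcal{A}$, which yields $D(\Theta(t))\le 4\pi+\ell$; your step (iv) then finishes the proof. I would replace your step (iii) by this absorption argument.
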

\noindent In words, this means that if we have a majority $\gamma>1/2$ of the oscillators lying on a small arc of length $\ell<2\cos^{-1}\left(\frac{1}{\gamma}-1\right)$, then with sufficiently large coupling $\kappa$, the ensuing Kuramoto flow will exhibit asymptotic phase-locking. With Proposition \ref{P2.2} at hand, the authors in \cite{H-K-R} were able to show the emergence of complete synchronization for a generic initial phase configuration, summarized as follows.
\begin{theorem}\label{T2.3}
\emph{\cite{H-K-R}}
Suppose that the initial configuration $\Theta^0$ satisfies the conditions:
\begin{equation*}\label{generic}
R_0 >0 \quad \mbox{and} \quad \theta_i^0\not\equiv \theta_j^0 \mod 2\pi, \quad \forall~i \neq j.
\end{equation*}
 Then for sufficiently large coupling strength $\kappa$, the solution $\Theta(t)$ to \eqref{Ku} with initial data $\Theta^0$  converges to a  phase-locked state $\Theta^{\infty}$:
\[ \lim_{t \to \infty} ||\Theta(t) - \Theta^{\infty}||_{\infty} = 0. \]
\end{theorem}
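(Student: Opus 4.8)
\medskip

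\noindent\textbf{Proof strategy.} The plan is to exploit $R_0>0$ to drive the amplitude order parameter $R$ upward until a strict majority of the oscillators is confined to an arbitrarily short arc, and then to appeal to Proposition~\ref{P2.2} — applied to the autonomous flow \eqref{Ku} restarted at that time — together with Proposition~\ref{NoChaos}. Throughout I work with the functional $\Delta$ of \eqref{New-1} and a smooth branch of $\phi$, legitimate as long as $R>0$. First I would show that $\Delta$ must become small. Put $\delta_\ast:=D(\Omega)^2/(\kappa^2R_0^2)$. By Lemma~\ref{L2.1}, whenever $R(t)\ge R_0$ and $\Delta(t)\ge\delta_\ast$ one has $\dot R(t)\ge 0$, and in fact $\dot R(t)\ge D(\Omega)^2/(2\kappa R_0)>0$. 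A barrier argument then keeps $R(t)\ge R_0$ on $[0,T_1)$, where $T_1:=\inf\{t\ge0:\Delta(t)<\delta_\ast\}$, and since $R\le1$ the uniform positive lower bound on $\dot R$ forces $T_1<\infty$; by continuity $R(T_1)\ge R_0$ while $\Delta(T_1)\le\delta_\ast$, and $\delta_\ast\to0$ as $\kappa\to\infty$.

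Next I would convert the smallness of $\Delta(T_1)$ into a geometric clustering statement. Set the scale $s:=(D(\Omega)/(\kappa R_0))^{1/2}$. Chebyshev's inequality bounds the number of indices with $|\sin(\theta_k-\phi)|\ge s$ at time $T_1$ by $N\Delta(T_1)/s^2\le ND(\Omega)/(\kappa R_0)$, a vanishing fraction; every other index has $\theta_k$ within $\arcsin s$ of $\phi$ or of $\phi+\pi$. Let $\mathcal P$ collect the former. Estimating $R(T_1)=\frac1N\sum_k\cos(\theta_k-\phi)$ from above ($\cos\le1$ on $\mathcal P$, $\cos\le-\sqrt{1-s^2}$ on the antipodal cluster, $|\cos|\le1$ on the negligible remainder) and using $R(T_1)\ge R_0$, a short count yields $\gamma:=|\mathcal P|/N\ge\frac{1+R_0}{2}-o(1)$, hence $\gamma\ge\frac12+\frac{R_0}{4}$ for $\kappa$ large; moreover, choosing the lift of $\phi$ so that $\mathcal P$ is not split, $D(\Theta_{\mathcal P})\le 2\arcsin s<\ell$ for some $\ell=\ell(\kappa)\to0$.

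Then I would close the argument. For $\kappa$ large, $2\cos^{-1}(\frac1\gamma-1)$ stays above the positive constant $2\cos^{-1}(\frac{2-R_0}{2+R_0})$ whereas $\ell\to0$, so $\ell<2\cos^{-1}(\frac1\gamma-1)$; and for small $\ell$, $\gamma\sin\ell-2(1-\gamma)\sin\frac\ell2\sim(2\gamma-1)\ell\gtrsim R_0 s$, so $D(\Omega)/(\gamma\sin\ell-2(1-\gamma)\sin\frac\ell2)\lesssim(D(\Omega)\kappa/R_0)^{1/2}<\kappa$ once $\kappa\gtrsim D(\Omega)/R_0$. Thus the configuration $\Theta(T_1)$, normalized so that $\Theta(T_1)\in[-\pi,\pi)^N$ with $\mathcal P$ clustered near $0$ (after relabeling $\mathcal P=\{1,\dots,n_0\}$), meets every hypothesis of Proposition~\ref{P2.2} with $n_0=|\mathcal P|$, giving $\sup_{t\ge T_1}D(\Theta(t))\le 4\pi+\ell$ and $\lim_{t\to\infty}\|\dot\Theta(t)\|_\infty=0$. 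Since $D(\Theta(\cdot))$ is also bounded on the compact interval $[0,T_1]$, Remark~\ref{R2.2} yields $\sup_{0\le t<\infty}\|\Theta(t)\|_\infty<\infty$, and Proposition~\ref{NoChaos} then produces a phase-locked state $\Theta^\infty$ with $\|\Theta(t)-\Theta^\infty\|_\infty\to0$.

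The hard part is the second step: extracting a geometric clustering with a \emph{guaranteed majority} near $\phi$ from the single scalar bound ``$\Delta$ small''. This is precisely where $R_0>0$ is indispensable, as it forbids the balanced antipodal configurations of Example~\ref{NoSync}; the delicate point is that $R$ is controlled only on $[0,T_1)$, so one must monitor $R$ and $\Delta$ jointly and perform the count at that one instant. The remaining hypothesis $\theta_i^0\not\equiv\theta_j^0$ plays no role in this estimate; it is a mild genericity assumption (compare Example~\ref{NoSync} and Remark~\ref{R2.3}) preventing the flow from starting at a degenerate collision configuration.
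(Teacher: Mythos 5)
Your argument is correct in substance, but it is not the proof that the paper attributes to Theorem \ref{T2.3}: the cited proof from \cite{H-K-R} proceeds by a comparison principle against the identical-oscillator flow, enlarging $\kappa$ until the nonidentical trajectory stays close to a phase-locked state of the identical system, and only then invokes Propositions \ref{NoChaos} and \ref{P2.2}; that route genuinely uses the hypothesis $\theta_i^0\not\equiv\theta_j^0 \bmod 2\pi$ and yields only an unspecified threshold depending on $N$ and $\Theta^0$. What you have written is instead, almost step for step, the argument the paper develops in Section \ref{sec:5} to prove the stronger Theorem \ref{T3.2}: your first step is Lemma \ref{L5.1}, with the simplification that you use the fixed barrier $\delta_*=D(\Omega)^2/(\kappa R_0)^2$ rather than the moving barrier $\tfrac14\bigl(D(\Omega)/(\kappa R(t))\bigr)^2$, which lets you bypass the paper's delicate Case B.2 at the cost of a harmless constant; your Chebyshev-plus-antipodal count plays the role of Lemma \ref{L5.2} (the paper uses a Jensen-inequality estimate in the regime $\eqref{D-5}_2$, but extracts the same majority $\gamma\ge\tfrac{1+R_0}{2}-o(1)$); and the closing step is Corollary \ref{C3.1}/Proposition \ref{P2.2} followed by Proposition \ref{NoChaos}. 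This buys strictly more than the statement asks: an explicit, $N$-independent threshold $\kappa\gtrsim D(\Omega)/R_0^2$, and the no-initial-collision hypothesis becomes superfluous, exactly as you observe.

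Two small points to tighten. First, your uniform bound $\dot R\ge D(\Omega)^2/(2\kappa R_0)$ degenerates when $D(\Omega)=0$, so $T_1<\infty$ is not forced in that case; treat identical oscillators separately via Dong and Xue \cite{D-X}, as the paper does. Second, Proposition \ref{P2.2} requires the strict inequality $\max_{1\le j,k\le n_0}|\theta_{j0}-\theta_{k0}|<\ell$, so take $\ell$ slightly larger than $2\arcsin s$ (say $3\arcsin s$); this does not affect the asymptotics $\ell\to 0$ and $\gamma\sin\ell-2(1-\gamma)\sin\tfrac{\ell}{2}\gtrsim R_0\, s$ on which your final coupling estimate rests.
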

\noindent The idea is to use the comparison principle between the identical case and the nonidentical case. Since we know that asymptotic phase-locking happens in the identical case, we can enlarge $\kappa$ to restrict the behavior of nonidentical oscillators close to a phase-locked state of the identical case, and then apply Proposition \ref{NoChaos} and Proposition \ref{P2.2}. Although the paper \cite{H-K-R} provided the resolution of asymptotic phase-locking for generic initial phase vectors in a large coupling strength regime, it was not able to provide an upper bound for the required coupling strength $\kappa$ independent of $N$. One of main contributions of this paper is to provide an explicit $N$-independent upper bound on $\kappa$ that gives rise to asymptotic phase-locking (see Theorem \ref{T3.2} in the next section). 
\section{Description of main results} \label{sec:3}
\setcounter{equation}{0} 
In this section, we briefly summarize our three main results, namely emergence of partial phase-locking and two sufficient coupling strengths for complete phase-locking. Each subsection will be devoted to one main result, along with a comparison with earlier known results.
\subsection{Partial phase-locking of majority ensembles} \label{sec:3.1}
In this subsection, we briefly present our first main result on the asymptotic emergence of partial phase-locking. Our first result generalizes the complete phase-locking result of Theorem \ref{T2.2} to partial phase-locking.

Recall that the result of Theorem \ref{T2.2} says that if we have all of the oscillators initially lying on some proper subarc of the half-circle, then, with sufficiently large $\kappa$, we can predict the behavior of the ensuing Kuramoto flow.  We will show that a similar result holds even if we know only a majority (i.e., more than half) of the oscillators lie on some small arc. The reason for this is that if we have sufficient control over a majority of the oscillators, then we have enough data on the mean field term of \eqref{Ku}. Before we present our first result, we introduce some related terminology.
\begin{definition} \label{D3.1}
Let $\mathcal{A}\subset\mathcal{N}$ be a collection of indices, let $\gamma\in (0.5,1]$, $t_0\ge 0$ and $\ell>0$ be real numbers, and let $\Theta(t)$ be a solution to \eqref{Ku}.
\begin{enumerate}
\item The ensemble $\Theta_\mathcal{A}=(\theta_i)_{i\in \mathcal{A}}$ is a $\gamma$-ensemble if $\frac{|\mathcal{A}|}{N}\ge \gamma$.
\item The ensemble $\Theta_\mathcal{A}(t_0)$ is a $\gamma$-ensemble of arclength $\le \ell$ at time $t_0$ if it is a $\gamma$-ensemble and if, after some suitable modulo $2\pi$ shifts, we have $D(\Theta_\mathcal{A}(t_0))\le \ell$.
\item The ensemble $\Theta_\mathcal{A}$ is a ``stable" $\gamma$-ensemble of arclength $\le \ell$ at time $t_0$ if it is a $\gamma$-ensemble and if, after some suitable modulo $2\pi$ shifts, we have $D(\Theta_\mathcal{A}(t))\le \ell$ for all $t\ge t_0$.
\end{enumerate}
\end{definition}
\begin{remark} \label{R3.1}
If there is no confusion, we will sometime say $\mathcal{A}$ itself is a (stable) $\gamma$-ensemble (of arclength $\le \ell$).
\end{remark}
We are now ready to present our first main results on the formation of stable $\gamma$-ensembles and partial phase-locking ensembles. For the parameters $\gamma,\ell,\kappa$ and $\Omega_{\mathcal B}$, we set
\[ \kappa_*(\gamma, \ell, D(\Omega_\mathcal{B})) := \frac{D(\Omega_\mathcal{B})}{\gamma \sin \ell -2 (1-\gamma) \sin \frac{\ell}{2}}. \]
\begin{theorem}\label{T3.1}
For given sets of indices $\mathcal{A}\subset\mathcal{B}\subset\mathcal{N}$, suppose that the parameters $\gamma,\ell$ and $\kappa$ satisfy
\begin{equation} \label{B-0}
\frac{1}{2} < \gamma \leq 1, \quad \ell \in\left(0,2\cos^{-1} \Big( \frac{1}{\gamma} - 1 \Big) \right), \quad \kappa > \kappa_*(\gamma, \ell, D(\Omega_\mathcal{B})),
\end{equation}
and let $\Theta$ be a solution to \eqref{Ku} such that $\Theta_\mathcal{A}$ is a $\gamma$-ensemble of arclength $\le \ell$ at time $0$. Then the following assertions hold.
\begin{enumerate}
\item The ensemble $\Theta_\mathcal{A}$ is stable:
\[
\sup_{0\leq t<\infty}D(\Theta_\mathcal{A}(t))\leq \ell\quad \mbox{and}\quad\limsup_{t\rightarrow\infty}D(\Theta_\mathcal{A}(t))\leq \phi_1(\gamma, \kappa, D(\Omega_\mathcal{B})),
\]
where $\phi_1$ is the smaller root of the following trigonometric equation:
\[  \gamma\sin\theta-2(1-\gamma)\sin\frac{\theta}{2} = \frac{D(\Omega_\mathcal{B})}{\kappa} \quad \mbox{in}~~\Big(0,2\cos^{-1}\frac{1-\gamma}{\gamma} \Big). \]
\item The ensemble $\Theta_\mathcal{B}$ is partially phase-locked:
\[
\sup_{0\leq t<\infty}D(\Theta_\mathcal{B}(t))<\infty.
\]
\item If we assume in addition that
\begin{equation}\label{additional0}
\frac{D(\Omega_\mathcal{B})}{\kappa}< \frac{(2\gamma-1)^{3/2}}{\sqrt{2\gamma}}\frac{2-\gamma}{\sqrt{\gamma/2}+(1-\gamma)},
\end{equation}
then the oscillators of $\Theta_\mathcal{A}$ becomes ordered according to their natural frequencies: for $i,j\in \mathcal{A}$, \newline
\begin{enumerate}
\item
if $\nu_i>\nu_j$, then
\begin{align*}
\begin{aligned}
& \frac{\nu_i-\nu_j}{\kappa}\le \liminf_{t\rightarrow\infty}[\theta_i(t)-\theta_j(t)]\le \limsup_{t\rightarrow\infty}[\theta_i(t)-\theta_j(t)] \le \frac{\pi}{2\sqrt{2}(\gamma\cos\phi_1-(1-\gamma))}\frac{\nu_i-\nu_j}{\kappa}, 
\end{aligned}
\end{align*}
\item
and if $\nu_i=\nu_j$, then
\[ \lim_{t\rightarrow\infty}[\theta_i(t)-\theta_j(t)]=0. \]
\end{enumerate}
\end{enumerate}
\end{theorem}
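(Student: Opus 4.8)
The plan is to obtain all three assertions from differential inequalities for the extremal pairwise phase differences of the relevant ensemble, using throughout the elementary identity
\[
\sin(\theta_j-\theta_p)-\sin(\theta_j-\theta_q)=-2\sin\tfrac{\theta_p-\theta_q}{2}\cos\Big(\theta_j-\tfrac{\theta_p+\theta_q}{2}\Big)
\]
and the function $g(\theta):=\gamma\sin\theta-2(1-\gamma)\sin\frac{\theta}{2}=2\sin\frac{\theta}{2}\big(\gamma\cos\frac{\theta}{2}-(1-\gamma)\big)$, for which $\kappa_*(\gamma,\ell,D(\Omega_\mathcal{B}))=D(\Omega_\mathcal{B})/g(\ell)$. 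Since $\gamma>\tfrac12$, a routine computation shows that $g>0$ on $I:=\big(0,2\cos^{-1}\frac{1-\gamma}{\gamma}\big)$, that $g$ vanishes at both endpoints of $I$, and that $g$ increases then decreases on $I$; hence for $\kappa>\kappa_*$ the equation $g(\theta)=D(\Omega_\mathcal{B})/\kappa$ has precisely two roots $\phi_1<\phi_2$ in $I$, and the assumption $\kappa>\kappa_*$ is equivalent to $g(\ell)>D(\Omega_\mathcal{B})/\kappa$, i.e.\ to $\ell\in(\phi_1,\phi_2)$.

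For assertion (1) I would take $\theta_M(t),\theta_m(t)$ to be the maximal and minimal lifted oscillators of $\Theta_\mathcal{A}$, so that $D(\Theta_\mathcal{A})=\theta_M-\theta_m$, subtract the two equations \eqref{Ku}, apply the identity above with $(p,q)=(M,m)$, and bound
\[
\sum_{j=1}^N\cos\Big(\theta_j-\tfrac{\theta_M+\theta_m}{2}\Big)\ge|\mathcal{A}|\cos\tfrac{D(\Theta_\mathcal{A})}{2}-(N-|\mathcal{A}|)\ge N\Big(\gamma\cos\tfrac{D(\Theta_\mathcal{A})}{2}-(1-\gamma)\Big),
\]
using that each oscillator of $\mathcal{A}$ is within $\tfrac{D(\Theta_\mathcal{A})}{2}$ of the midpoint and that the remaining $\le(1-\gamma)N$ oscillators contribute $\ge-1$ each. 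This yields, while $D(\Theta_\mathcal{A})<\pi$,
\[
\frac{d}{dt}D(\Theta_\mathcal{A})\le D(\Omega_\mathcal{A})-\kappa\,g\big(D(\Theta_\mathcal{A})\big)\le D(\Omega_\mathcal{B})-\kappa\,g\big(D(\Theta_\mathcal{A})\big).
\]
Because $D(\Theta_\mathcal{A}(0))\le\ell<\pi$ and the last expression is negative at $D(\Theta_\mathcal{A})=\ell$, the band $\{D(\Theta_\mathcal{A})\le\ell\}$ is forward invariant (at a putative first exit time the right derivative would be strictly negative), giving the uniform bound; and since $g>D(\Omega_\mathcal{B})/\kappa$ on $(\phi_1,\ell]$, the same inequality forces $D(\Theta_\mathcal{A})$ to decrease strictly whenever it exceeds $\phi_1$, giving $\limsup_{t\to\infty}D(\Theta_\mathcal{A}(t))\le\phi_1$.

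The step I expect to be the most delicate is assertion (2): the natural move — analysing $\theta_i-\theta_a$ for $i\in\mathcal{B}$, $a\in\mathcal{A}$, directly as a forced pendulum — is awkward because the restoring force exerted by the (non-pointlike) cluster $\mathcal{A}$ degenerates when $\theta_i$ approaches the antipode of the cluster. The plan is instead to exploit that assertion (1) is monotone under enlargement of the controlled ensemble: if at some time $t_1\ge0$ the ensemble $\mathcal{A}\cup\{i\}$ — still a $\gamma$-ensemble, still contained in $\mathcal{B}$ — occupies an arc of length $\le\ell$, then applying assertion (1), with initial time $t_1$ in place of $0$, to the pair $(\mathcal{A}\cup\{i\},\mathcal{B})$ (the assumption $\kappa>\kappa_*(\gamma,\ell,D(\Omega_\mathcal{B}))$ is unchanged) gives $D(\Theta_{\mathcal{A}\cup\{i\}}(t))\le\ell$ for all $t\ge t_1$. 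So, fixing $a\in\mathcal{A}$ and $i\in\mathcal{B}$, let $t_1\in[0,\infty]$ be the first time $\theta_i$ belongs to the closed arc occupied by $\Theta_\mathcal{A}$. If $t_1<\infty$, that arc has length $D(\Theta_\mathcal{A}(t_1))\le\ell$, so the observation above yields $|\theta_i(t)-\theta_a(t)|\le\ell$ for $t\ge t_1$, hence $\sup_t|\theta_i-\theta_a|<\infty$ by continuity on $[0,t_1]$. If $t_1=\infty$, then $\theta_i$ never lies in the arc occupied by $\Theta_\mathcal{A}$, which always contains $\theta_a$; hence $\theta_i-\theta_a$ is a continuous function avoiding $2\pi\bbz$, so it stays in a single interval $(2\pi k,2\pi(k+1))$ and is bounded. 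Since $\mathcal{B}$ is finite, $\sup_t D(\Theta_\mathcal{B}(t))\le 2\max_{i\in\mathcal{B}}\sup_t|\theta_i-\theta_a|<\infty$.

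For assertion (3) I would use assertion (1) to fix a time past which $D(\Theta_\mathcal{A})\le\phi_1+\varepsilon$; for $i,j\in\mathcal{A}$ the identity then gives $\frac{d}{dt}(\theta_i-\theta_j)=(\nu_i-\nu_j)-\frac{2\kappa}{N}\sin\frac{\theta_i-\theta_j}{2}\sum_{k=1}^N\cos\big(\theta_k-\frac{\theta_i+\theta_j}{2}\big)$ with $\sum_k\cos(\cdot)\ge N\big(\gamma\cos(\phi_1+\varepsilon)-(1-\gamma)\big)$. A short algebraic check shows that \eqref{additional0} is equivalent to $D(\Omega_\mathcal{B})/\kappa<g\big(\cos^{-1}\frac{1-\gamma}{\gamma}\big)$, which forces $\phi_1<\cos^{-1}\frac{1-\gamma}{\gamma}$, i.e.\ $\gamma\cos\phi_1-(1-\gamma)>0$; so for $\varepsilon$ small the displayed coefficient is positive and $\phi_1<\tfrac\pi2$. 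When $\nu_i>\nu_j$, using $\sin x\le x$ and $\sum_k\cos(\cdot)\le N$ gives $\frac{d}{dt}(\theta_i-\theta_j)>0$ whenever $\theta_i-\theta_j<\frac{\nu_i-\nu_j}{\kappa}$, hence $\liminf_{t\to\infty}(\theta_i-\theta_j)\ge\frac{\nu_i-\nu_j}{\kappa}$ and in particular eventual ordering; while the concavity bound $\sin t\ge\frac{2\sqrt2}{\pi}t$ on $[0,\tfrac\pi4]$, applied to $t=\frac{\theta_i-\theta_j}{2}\le\tfrac\pi4$, gives $\frac{d}{dt}(\theta_i-\theta_j)<0$ whenever $\theta_i-\theta_j>\frac{\pi}{2\sqrt2(\gamma\cos(\phi_1+\varepsilon)-(1-\gamma))}\cdot\frac{\nu_i-\nu_j}{\kappa}$, hence the stated $\limsup$ bound after $\varepsilon\to0$. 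When $\nu_i=\nu_j$, the right-hand side has sign opposite to that of $\theta_i-\theta_j$ as long as $D(\Theta_\mathcal{A})\le\phi_1+\varepsilon<\tfrac\pi2$, so $|\theta_i-\theta_j|\to0$.
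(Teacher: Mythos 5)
Your proposal is correct and follows essentially the same route as the paper's proof: the same Gronwall inequality $\frac{d}{dt}\big|^{+}D(\Theta_\mathcal{A})\le D(\Omega_\mathcal{B})-\kappa f(D(\Theta_\mathcal{A}))$ with an exit-time argument for (1), the same enlarge-the-ensemble-to-$\mathcal{A}\cup\{i\}$ trick versus the never-enters-the-arc alternative for (2), and the same pairwise estimates using $\sin x\le x$, $\sin x\ge \frac{2\sqrt{2}}{\pi}x$ on $[0,\frac{\pi}{4}]$, and the equivalence of \eqref{additional0} with $\phi_1<\cos^{-1}\frac{1-\gamma}{\gamma}$ for (3). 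The only compressions relative to the paper are routine (Dini derivatives for the diameter, the uniform negativity of the right-hand side on $[\phi_1+\varepsilon,\ell]$ needed to upgrade strict decrease to the limsup bound, and the exponential-decay Gronwall step in the $\nu_i=\nu_j$ case), all of which the paper spells out explicitly.
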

In words, our first main result says that a $\gamma$-ensemble of arclength $\le\ell<2\cos^{-1} \Big( \frac{1}{\gamma_0} - 1 \Big)$ is stable for sufficiently strong coupling strength $\kappa$, and that the diameter of that ensemble will have a limit supremum bounded above by $\phi_1$. By Lemma \ref{f-estimates} (1), we have
\[ \phi_1< \frac{3\pi}{4(2\gamma-1)}\frac{D(\Omega_\mathcal{B})}{\kappa}, \]
so we may make the limiting diameter as small as we wish by increasing $\kappa$. Moreover, we can control a bigger part $\mathcal{B}$ of the population, simply by making $\kappa$ large enough to satisfy $\eqref{B-0}_3$. In the extremal case $\mathcal{B}=\mathcal{N}$, the total diameter $D(\Theta(t))$ will stay bounded, and hence by Proposition \ref{NoChaos} we will have asymptotic phase-locking. This is the result we will use in Section \ref{sec:5}, and so we record this separately in the following corollary.
\begin{corollary}\label{C3.1}
Suppose that $\Theta$ is a solution to \eqref{Ku}, and that $\Theta_\mathcal{A}$ is a $\gamma$-ensemble of arclength $\le \ell$ at time $0$, where the parameters $\gamma,l, \kappa \in \bbr$ satisfy
\begin{equation} \label{B-1}
\frac{1}{2} < \gamma \leq 1, \quad \ell \in\left(0,2\cos^{-1} \Big( \frac{1}{\gamma} - 1 \Big) \right), \quad \kappa> \kappa_*(\gamma, \ell, D(\Omega)).
\end{equation}
Then the following assertions hold. 
\begin{enumerate}
\item The ensemble $\Theta_\mathcal{A}$ is stable:
\begin{equation}\label{old0}
\sup_{0\leq t<\infty}D(\Theta_\mathcal{A}(t))\leq \ell
\end{equation}
and
\begin{equation}\label{notold}
\limsup_{t\rightarrow\infty}D(\Theta_\mathcal{A}(t))\leq \phi_1(\gamma, \kappa, D(\Omega))\le \frac{3\pi}{4(2\gamma-1)}\frac{D(\Omega)}{\kappa}.
\end{equation}
\item Asymptotic phase-locking occurs:
\begin{equation}\label{old1}
\sup_{0\leq t<\infty}D(\Theta(t))<\infty.
\end{equation}
\item If we assume in addition that
\begin{equation}\label{additional}
\frac{D(\Omega)}{\kappa}< \frac{(2\gamma-1)^{3/2}}{\sqrt{2\gamma}}\frac{2-\gamma}{\sqrt{\gamma/2}+(1-\gamma)}
\end{equation}
then the oscillators of $\Theta_\mathcal{A}$ become ordered according to their natural frequencies: for $i,j\in \mathcal{A}$ with $\nu_i\ge \nu_j$ we have
\[
\frac{\nu_i-\nu_j}{\kappa}\le \liminf_{t\rightarrow\infty}[\theta_i(t)-\theta_j(t)]\le \limsup_{t\rightarrow\infty}[\theta_i(t)-\theta_j(t)]
\le \frac{\pi}{2\sqrt{2}(\gamma\cos\phi_1-(1-\gamma))}\frac{\nu_i-\nu_j}{\kappa}.
\]
\end{enumerate}
\end{corollary}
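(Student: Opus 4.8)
The plan is to obtain Corollary~\ref{C3.1} as the specialization $\mathcal{B}=\mathcal{N}$ of Theorem~\ref{T3.1}, supplemented by Proposition~\ref{NoChaos} for the convergence statement and by Lemma~\ref{f-estimates} for the explicit linear bound on $\phi_1$. First I would note that taking $\mathcal{B}=\mathcal{N}$ gives $\Omega_{\mathcal B}=\Omega$, hence $D(\Omega_{\mathcal B})=D(\Omega)$ and $\kappa_*(\gamma,\ell,D(\Omega_{\mathcal B}))=\kappa_*(\gamma,\ell,D(\Omega))$, so that the hypothesis \eqref{B-0} of Theorem~\ref{T3.1} reduces verbatim to the hypothesis \eqref{B-1} of the corollary. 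Since $\Theta_\mathcal{A}$ is assumed to be a $\gamma$-ensemble of arclength $\le\ell$ at time $0$, Theorem~\ref{T3.1} applies with this choice of $\mathcal{B}$.

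For part~(1), the bound \eqref{old0} and the first inequality in \eqref{notold} are exactly Theorem~\ref{T3.1}(1) after substituting $D(\Omega_{\mathcal B})=D(\Omega)$. For the second inequality in \eqref{notold}, namely $\phi_1\le\frac{3\pi}{4(2\gamma-1)}\frac{D(\Omega)}{\kappa}$, recall that $\phi_1$ is the smaller root in $\big(0,2\cos^{-1}\frac{1-\gamma}{\gamma}\big)$ of $f(\theta):=\gamma\sin\theta-2(1-\gamma)\sin\frac{\theta}{2}=\frac{D(\Omega)}{\kappa}$. Here I would invoke the concavity/monotonicity estimate for $f$ recorded in Lemma~\ref{f-estimates}(1), which bounds $f(\theta)$ below by a suitable linear function of $\theta$ on the relevant interval; evaluating at $\theta=\phi_1$ and solving for $\phi_1$ gives the claimed bound. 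This is the only place a short computation is needed, and it is entirely internal to Lemma~\ref{f-estimates} and uses none of the dynamics.

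For part~(2), Theorem~\ref{T3.1}(2) with $\mathcal{B}=\mathcal{N}$ gives $\sup_{0\le t<\infty}D(\Theta(t))<\infty$, which is exactly \eqref{old1}. To upgrade boundedness of the phase \emph{diameter} to the hypothesis \eqref{bdd} of Proposition~\ref{NoChaos}, I would use that, after the Galilean normalization \eqref{zero-nat}, the total phase is conserved by \eqref{conservation-zero}; as noted in Remark~\ref{R2.2}(2), this makes $\sup_t D(\Theta(t))<\infty$ equivalent to $\sup_t\|\Theta(t)\|_\infty<\infty$. Proposition~\ref{NoChaos} then yields convergence of $\Theta(t)$ to a phase-locked state and $\|\dot\Theta(t)\|_\infty\to 0$, i.e.\ asymptotic (complete) phase-locking, which justifies the heading of this part.

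For part~(3), with $\mathcal{B}=\mathcal{N}$ the extra hypothesis \eqref{additional0} of Theorem~\ref{T3.1} becomes \eqref{additional}, and Theorem~\ref{T3.1}(3) gives, for $i,j\in\mathcal{A}$, the two-sided asymptotic estimate on $\theta_i(t)-\theta_j(t)$ when $\nu_i>\nu_j$ and the identity $\lim_{t\to\infty}(\theta_i(t)-\theta_j(t))=0$ when $\nu_i=\nu_j$; merging the two cases — the chain of inequalities degenerating to the equality $=0$ when $\nu_i=\nu_j$ — produces precisely the displayed estimate under the single hypothesis $\nu_i\ge\nu_j$. The main point to be careful about is purely bookkeeping: there is no new analytic content beyond Theorem~\ref{T3.1}, so the only obstacles are checking that the reduction $\mathcal{B}=\mathcal{N}$ is legitimate in each clause and that the hypothesis of Proposition~\ref{NoChaos} is genuinely secured through the conservation law.
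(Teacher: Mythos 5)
Your proposal is correct and coincides with the paper's own route: the corollary is obtained exactly by setting $\mathcal{B}=\mathcal{N}$ in Theorem \ref{T3.1}, invoking Lemma \ref{f-estimates}(1) for the linear bound on $\phi_1$, and passing from the bounded diameter to asymptotic phase-locking via the conservation law and Proposition \ref{NoChaos}. Your observation that the two cases $\nu_i>\nu_j$ and $\nu_i=\nu_j$ merge into the single displayed chain of inequalities is also exactly how the paper's statement should be read.
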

\begin{remark}\label{novelty}
The results \eqref{old0} and \eqref{old1} under conditions \eqref{B-1} have been previously observed in \cite{H-K-R}, namely as Proposition \ref{P2.2}. The novelty of our results compared to the results in \cite{H-K-R} is that
\begin{itemize}
\item[(a)] we generalize the results to partial phase-locking,
\item[(b)] we obtain a stronger upper bound on the asymptotic phase diameter, namely \eqref{notold},
\item[(c)] and we describe the limiting behavior of the stable ensemble according to the corresponding natural frequencies, namely (3) above.
\end{itemize}
\end{remark}
\subsection{Complete phase-locking I (pathwise, $R_0$-dependent)} \label{sec:3.2}
In this subsection, we state our second main result for complete phase-locking which improves the result of Theorem \ref{T2.3} by replacing the $N$-dependent lower bound for $\kappa$ by an $N$-independent lower bound. More precisely, our second main result can be stated as follows.
\begin{theorem}\label{T3.2}
Suppose that $\Theta^0$, $\Omega$ and $\kappa$ satisfy
\[
R_0 :=  R(\Theta^0) > 0, \quad \kappa >1.6\frac{D(\Omega)}{R_0^2},
\]
and let $\Theta = \Theta(t)$ be the solution to \eqref{Ku} with initial data $\Theta^0$. Then, the following assertions hold.
\begin{enumerate}
\item Asymptotic complete phase-locking occurs.

\vspace{0.2cm}

\item Let
\[
\gamma(R_0)=
\begin{cases}
0.5+\frac{0.35}{0.94}R_0& 0<R_0\le 0.94, \\
1-2.5(1-R_0)& 0.94<R_0\le 1.
\end{cases}
\]
Then $\Theta(t)$ has a stable $\gamma(R_0)$-ensemble of arclength $\le \frac{3\pi}{4(2\gamma(R_0)-1)}\frac{D(\Omega)}{\kappa}$.

\vspace{0.2cm}

\item The aforementioned stable $\gamma(R_0)$-ensemble becomes ordered in accordance with its natural frequencies: if $\theta_i$ and $\theta_j$ belong to that ensemble and $\nu_i\ge \nu_j$, then
\[
\frac{\nu_i-\nu_j}{\kappa}\le \liminf_{t\rightarrow\infty}[\theta_i(t)-\theta_j(t)]\le \limsup_{t\rightarrow\infty}[\theta_i(t)-\theta_j(t)]
\le c\frac{\nu_i-\nu_j}{\kappa},
\]
where the constant $c$ is defined as
\[
c :=\frac{\pi}{2\sqrt{2}(\gamma\cos\phi_1(\gamma, \kappa, D(\Omega))-(1-\gamma))}.
\]
\end{enumerate} 
\end{theorem}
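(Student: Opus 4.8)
\emph{Plan.} Everything reduces to Corollary~\ref{C3.1}. The claim is that there exist a time $t_0\ge 0$ and a subset $\mathcal{A}\subset\mathcal{N}$ with $|\mathcal{A}|/N\ge\gamma(R_0)$ such that $\Theta_{\mathcal{A}}(t_0)$ is, after suitable mod-$2\pi$ shifts, contained in an arc of length $\le\ell$ for some $\ell$ with $(\gamma(R_0),\ell,\kappa)$ satisfying \eqref{B-1} (equivalently, $\ell\in\big(0,2\cos^{-1}(\tfrac1{\gamma(R_0)}-1)\big)$ and $\kappa>\kappa_*(\gamma(R_0),\ell,D(\Omega))$). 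Granting this, Corollary~\ref{C3.1} applied with initial time $t_0$ gives all three assertions: (1) is part (2) of the corollary together with Proposition~\ref{NoChaos}; (2) follows from part (1), using the bound $\phi_1\le\frac{3\pi}{4(2\gamma-1)}\frac{D(\Omega)}{\kappa}$ recorded in \eqref{notold}; and (3) is part (3), once one has checked that the side condition \eqref{additional} holds --- and this last check is a direct computation from $\kappa>1.6\,D(\Omega)/R_0^2$ and the explicit piecewise formula for $\gamma(R_0)$. So the real content is the production of $t_0$ and $\mathcal{A}$, and this is where the constant $1.6$ and the formula for $\gamma(R_0)$ are used.

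\emph{Step 1: the amplitude order parameter climbs.} By Lemma~\ref{L2.1}, $\dot R\ge\kappa\sqrt{\Delta}\big(R\sqrt{\Delta}-\frac{D(\Omega)}{2\kappa}\big)$, so $R$ is nondecreasing whenever $\Delta>\frac{D(\Omega)^2}{4\kappa^2R^2}$; since $R(0)=R_0$, the only way $R$ can ever fall below $R_0$ is if $\Delta$ has already dropped below $\frac{D(\Omega)^2}{4\kappa^2R_0^2}$, which by hypothesis is $<R_0^2/10.24$. But more is needed: I would show that $R$ in fact rises above $\gamma(R_0)$ by a definite margin. The mechanism is that whenever $\Delta$ is small the ensemble is squeezed into small neighborhoods of the two antipodal points $\phi$ and $\phi+\pi$, and the sub-ensemble sitting near the unstable point $\phi+\pi$ is ejected at exponential rate $\sim\kappa R$ (read off from the mean-field form \eqref{Ku-mf}) and merges into the cluster near $\phi$, which forces $R$ upward; running this feedback to completion --- while $R$ and $\phi$ are themselves evolving, so this is a bootstrap and not a one-shot estimate --- gives $\liminf_{t\to\infty}R(t)$ bigger than $\gamma(R_0)$ plus a buffer that absorbs the small-$\Delta$ and arclength errors of Step~2. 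I expect this ``$R$ gets large'' statement to be Lemma~\ref{L5.3}, whose proof occupies Appendix~A.

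\emph{Step 2: from a large order parameter to a $\gamma(R_0)$-ensemble.} At a time $t_0$ where $R(t_0)$ exceeds $\gamma(R_0)$ by the margin above and $\Delta(t_0)$ is small, a counting argument produces the ensemble. Writing $n_1$ (resp.\ $n_2$) for the number of oscillators lying within an arc of half-width $b$ of $\phi(t_0)$ (resp.\ of $\phi(t_0)+\pi$), the identity $\sum_k\sin^2(\theta_k-\phi)=N\Delta$ gives $n_1+n_2\ge N(1-\Delta/\sin^2 b)$, while comparing $Re^{\mathrm{i}\phi}$ with the centroid gives $n_2/N\le(1-R)/(1+\cos b)$; hence
\[ \frac{n_1}{N}\ \ge\ 1-\frac{\Delta}{\sin^2 b}-\frac{1-R}{1+\cos b}. \]
One then chooses $b$ so that the right-hand side is $\ge\gamma(R_0)$ and, simultaneously, $\ell:=2b$ lies in the admissible window $\big(0,2\cos^{-1}(\tfrac1{\gamma(R_0)}-1)\big)$ with $\gamma(R_0)\sin\ell-2(1-\gamma(R_0))\sin\frac\ell2>\frac{D(\Omega)}{\kappa}$, i.e.\ $\kappa>\kappa_*(\gamma(R_0),\ell,D(\Omega))$. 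The constant $1.6$ (equivalently $D(\Omega)/\kappa<R_0^2/1.6$) together with the two branches of $\gamma(R_0)$ --- the linear branch $0.5+\frac{0.35}{0.94}R_0$ for $R_0\le0.94$, where the binding constraint is that the $\frac{1-R}{1+\cos b}$ term must still fit inside $1-\gamma(R_0)$, and the branch $1-2.5(1-R_0)$ for $R_0$ near $1$, where one can afford $\gamma$ close to $1$ --- are calibrated exactly so that this window is nonempty for every $R_0\in(0,1]$. This is a (tight) constant-chasing computation, not a conceptual difficulty.

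\emph{Main obstacle.} The hard part is Step~1 / Lemma~\ref{L5.3}. The lower bound in Lemma~\ref{L2.1} is genuinely negative for near-antipodal configurations, so $R$ is not monotone a priori, and one cannot bound $\Delta$ from below by any function of $R$ alone --- the two-cluster antipodal state (precisely the configuration of Example~\ref{NoSync}) has $R$ arbitrary and $\Delta=0$. Thus one must prove that this antipodal obstruction is dynamically unstable and transient, that its net effect is to carry $R$ above $\gamma(R_0)$ plus the required buffer, and that this target level is compatible with the stalling threshold $\Delta\sim\frac{D(\Omega)^2}{4\kappa^2R^2}$ coming from Lemma~\ref{L2.1} --- which is exactly the balance that pins down the value $1.6$.
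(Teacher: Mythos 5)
Your overall architecture --- reduce everything to Corollary \ref{C3.1} by producing a time $t_0$ and a $\gamma(R_0)$-ensemble of admissible arclength, then check \eqref{B-1} and \eqref{additional} by constant-chasing --- is exactly the paper's, and your Step~2 counting inequality $\frac{n_1}{N}\ge 1-\frac{\Delta}{\sin^2 b}-\frac{1-R}{1+\cos b}$ is a legitimate variant of the paper's Lemma \ref{L5.2} (which uses a Jensen-type bound to get $\frac{|\mathcal{A}|}{N}\ge\frac12\bigl(1+R_0-\frac{\Delta}{1-\cos\beta}\bigr)$). The genuine gap is your Step~1. You set yourself the task of proving that $\liminf_{t\to\infty}R(t)$ exceeds $\gamma(R_0)$ by a definite margin, via an ``antipodal ejection'' bootstrap that you describe only heuristically and never carry out. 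This is both unproven and far stronger than needed: for small $R_0$ it would force $\liminf R>\tfrac12$, which is essentially the content of the open Conjecture \ref{MainConj} discussed in Section \ref{sec:7}, not something available here. It is also unnecessary, as your own Step~2 inequality already shows: with $R(t_0)\ge R_0$ (not $R(t_0)\ge\gamma$) and $\Delta(t_0)$ small, the majority fraction is bounded below by roughly $\frac{1+R_0}{2}$ minus the $\Delta$-correction, and $\gamma(R_0)=0.5+\frac{0.35}{0.94}R_0<\frac{1+R_0}{2}$ leaves exactly the slack needed to absorb that correction. The second-moment constraint, not the size of $R$, is what produces a majority.

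What the paper actually proves in its Step A (Lemma \ref{L5.1}) is only the existence of a single time $t_0$ with $R(t_0)\ge R_0$ and $\Delta(t_0)\le\frac14\bigl(\frac{D(\Omega)}{\kappa R_0}\bigr)^2$ simultaneously, and this follows from the soft observation you already made in your first sentence: on the set where $\Delta>\frac{D(\Omega)^2}{4\kappa^2R^2}$, Lemma \ref{L2.1} makes $R$ strictly increasing, so at the exit time one has both conditions; the only delicate case is when the exit time is infinite, which the paper handles by rewriting the differential inequality as $\dot R>\frac{D(\Omega)}{2}\bigl(\sqrt{\Delta}-\frac{D(\Omega)}{2\kappa R}\bigr)$ and integrating against $R\le 1$ to find times where $\sqrt{\Delta}-\frac{D(\Omega)}{2\kappa R}$ is arbitrarily small. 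You should replace your Step~1 with this exit-time/integration argument. Separately, you have misattributed the content of Lemma \ref{L5.3}: it is the purely computational verification (Appendix A) that the piecewise choices of $\gamma(R_0)$ and $\beta(R_0)$ satisfy the five constraints, not a dynamical statement about the growth of $R$; the dynamics lives entirely in Lemma \ref{L5.1}, which is much weaker than what you proposed.
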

\begin{remark} 
1. The lower bound 
\[  \kappa_*(\gamma_0, \ell, D(\Omega)) = \frac{D(\Omega)}{\gamma_0 \sin \ell -2 (1-\gamma_0) \sin \frac{\ell}{2}} \]
for $\kappa$ in Proposition \ref{P2.2} will contribute to the lower bound $\frac{1.6}{R_0^2} D(\Omega)$ of Theorem \ref{T3.2} by making a suitable $R_0$-dependent choice of $\gamma$ and $\ell$. See Lemma \ref{L5.3} for our specific choice. \newline

\noindent 2. As a direct corollary, we have an upper bound for the pathwise critical coupling strength:
\[ \kappa_{pc}(\Theta^0,\Omega, N)\le \frac{1.6}{R_0^2} D(\Omega). \]

\noindent 3. The authors have previously proven a weaker version of Theorem \ref{T3.2} in \cite{H-Ko-R} for a related synchronization model on spheres of arbitrary dimension, namely that it is possible to concentrate a majority of the oscillators on the sphere to an arbitrarily small open ball, at the cost of increasing the coupling strength to the order of $R_0^{-2}$. Part of the proof of Theorem \ref{T3.2} draws some computational techniques from \cite{H-Ko-R}. The choice of parameters (specified in Lemma \ref{L5.3}) are more optimal compared to \cite{H-Ko-R} and thus the required coupling strength is smaller.
\end{remark}

\subsection{Complete phase-locking II(pathwise, uniform, $N$-dependent)} \label{sec:3.3}
In this subsection, we present our third main result on the pathwise critical coupling strength $\kappa_{pc}(\Omega, N)$. Ultimately, we would like to attain the bound of Conjecture \ref{MainConj}, and get rid of the $\frac{1}{R_0^2}$ factor of Theorem \ref{T3.2}. This was verified for $N=2$ in Example \ref{N=2}, and we will achieve this for $N= 3$ (see Corollary \ref{C3.3} and Proposition \ref{P4.1}). However, for $N\ge 4$, the situation becomes a little bit complicated: Example \ref{NoSync} gives us pathological examples with $R(t)\equiv 0$ that cannot synchronize even for arbitrarily large $\kappa$. Actually, failure to exhibit asymptotic phase-locking and having a low order parameter $R$ for all times is a closely related property: from the fact that system \eqref{Ku} is autonomous, we easily deduce from Theorem \ref{T3.2} the following corollary.
\begin{corollary} \label{C3.2}
Let $\Theta:\bbr\rightarrow \bbr^N$ be a solution to \eqref{Ku} on $\bbr^N$. If $\Theta$ does not exhibit asymptotic phase-locking, then we must necessarily have
\[
R(t)\le \sqrt{\frac{1.6}{\kappa} D(\Omega)},\quad t\in \bbr.
\]
\end{corollary}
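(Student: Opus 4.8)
The plan is to argue by contraposition, exploiting the fact that \eqref{Ku} is autonomous, so that the property of asymptotic phase-locking is insensitive to a time translation. Suppose, to the contrary, that there exists a time $t_0 \in \bbr$ at which the order parameter exceeds the claimed threshold, i.e.
\[ R(t_0) > \sqrt{\frac{1.6}{\kappa} D(\Omega)}. \]
Since the right-hand side is nonnegative, this forces $R(t_0) > 0$, and squaring and rearranging yields $\kappa > 1.6 D(\Omega)/R(t_0)^2$.

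Next I would pass to the time-shifted curve $\tilde\Theta(t) := \Theta(t + t_0)$. Because the vector field on the right-hand side of \eqref{Ku} does not depend explicitly on $t$, the curve $\tilde\Theta$ is again a solution of \eqref{Ku}, now viewed as the Cauchy problem with initial datum $\tilde\Theta(0) = \Theta(t_0)$. Since $R(\cdot)$ is defined pointwise from the phases through \eqref{Order}, we have $R(\tilde\Theta(0)) = R(t_0)$, so the triple $(\tilde\Theta(0), \Omega, \kappa)$ satisfies exactly the hypotheses $R_0 > 0$ and $\kappa > 1.6 D(\Omega)/R_0^2$ of Theorem \ref{T3.2}. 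Applying part (1) of that theorem, $\tilde\Theta$ exhibits asymptotic complete phase-locking; that is, $\lim_{t\to\infty}(\tilde\theta_i(t) - \tilde\theta_j(t))$ exists for every pair of indices $i,j$.

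Finally I would transport this conclusion back to $\Theta$. Since $\tilde\theta_i(t) - \tilde\theta_j(t) = \theta_i(t+t_0) - \theta_j(t+t_0)$ and a limit as $t\to\infty$ is unaffected by the constant shift $t_0$, the relative phases $\theta_i(t) - \theta_j(t)$ also converge as $t\to\infty$, so $\Theta$ itself exhibits asymptotic complete phase-locking. This contradicts the hypothesis of the corollary. Hence no such $t_0$ can exist, and $R(t) \le \sqrt{1.6 D(\Omega)/\kappa}$ for every $t \in \bbr$.

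There is no genuine obstacle in this argument: it is a direct corollary of Theorem \ref{T3.2}. The only point that deserves an explicit word — and the natural place to be slightly careful — is the invariance of the asymptotic phase-locking property under the time translation $t \mapsto t+t_0$, which is precisely where autonomy of \eqref{Ku} (together with global existence and uniqueness for the analytic vector field) enters.
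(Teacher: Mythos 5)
Your proof is correct and is exactly the argument the paper intends: the paper gives no separate proof, merely noting that the corollary follows from Theorem \ref{T3.2} ``from the fact that system \eqref{Ku} is autonomous,'' which is precisely your time-translation contrapositive. Nothing to add.
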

While we cannot completely rule out the possibility that $\kappa_{pc}(\Theta^0,\Omega, N)$ has a singularity at $R_0=0$, we can show that for fixed finite $N$, Conjecture \ref{MainConj} holds with $C=1.6N$. This is because solutions to \eqref{Ku} with extremely low order parameters for all times are inherently unstable.

\begin{theorem} \label{T3.3}
Suppose that for fixed $N$, $\Omega=(\nu_1,\cdots,\nu_N)$, the coupling strength satisfies
\[
\kappa>1.6ND(\Omega).
\]
Then for almost all initial data $\Theta^0$, the following statements are true for the ensuing Kuramoto flow $\Theta(t)$ for \eqref{Ku}.
\begin{enumerate}
\item Complete phase-locking occurs.

\vspace{0.2cm}

\item Let
\[
\gamma_N=0.5+\frac{0.35}{0.94\sqrt{N}}.
\]
Then, $\Theta(t)$ has a stable $\gamma_N$-ensemble of arclength $\le \frac{3\pi}{4(2\gamma_N-1)}\frac{D(\Omega)}{\kappa}$.

\vspace{0.2cm}

\item The aforementioned stable $\gamma_N$-ensemble becomes ordered in accordance with its natural frequencies: if $\theta_i$ and $\theta_j$ belong to that ensemble and $\nu_i\ge \nu_j$, then
\[
\frac{\nu_i-\nu_j}{\kappa}\le \liminf_{t\rightarrow\infty}[\theta_i(t)-\theta_j(t)]\le \limsup_{t\rightarrow\infty}[\theta_i(t)-\theta_j(t)]
\le c\frac{\nu_i-\nu_j}{\kappa},
\]
where the constant $c$ is defined as
\[
c:=\frac{\pi}{2\sqrt{2}(\gamma_N\cos\phi_1(\gamma_N, \kappa, D(\Omega))-(1-\gamma_N))}.
\]
\end{enumerate}
\end{theorem}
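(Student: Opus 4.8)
The plan is to pass to the compact configuration space $\bbt^N$ and combine a Liouville-type volume-transport argument, driven by the divergence of the Kuramoto vector field, with the quantitative phase-locking statement of Theorem \ref{T3.2}. Write $X=X(\Theta)$ for the vector field on $\bbt^N$ whose components are the right-hand sides of \eqref{Ku}; its flow $\phi_t$ is complete since $\bbt^N$ is compact. Differentiating $\dot\theta_i=\nu_i+\frac{\kappa}{N}\sum_j\sin(\theta_j-\theta_i)$ gives $\partial\dot\theta_i/\partial\theta_i=-\frac{\kappa}{N}\sum_j\cos(\theta_j-\theta_i)$, so summing over $i$ and invoking \eqref{order-rel-3} yields the identity
\[ \nabla\cdot X=-\kappa N R^2\qquad\text{on }\bbt^N. \]
In particular $\nabla\cdot X$ vanishes precisely on $\mathcal Z:=\{\Theta:R(\Theta)=0\}$, and since $R^2$ is real-analytic and not identically zero, $\mathcal Z$ is Lebesgue-null in $\bbt^N$.

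The core step is to show that the exceptional set
\[ \mathcal T:=\Big\{\Theta^0\in\bbt^N:\ R\big(\phi_t(\Theta^0)\big)\le\tfrac1{\sqrt N}\ \text{for all }t\in\bbr\Big\} \]
has measure zero. Continuity of $t\mapsto R(\phi_t(\Theta^0))$ gives $\mathcal T=\bigcap_{q\in\mathbb Q}\{\,\Theta^0:R(\phi_q(\Theta^0))\le 1/\sqrt N\,\}$, so $\mathcal T$ is closed (hence Borel), and it is invariant under $\phi_t$ because $\{t+s:t\in\bbr\}=\bbr$. Differentiating $\mathrm{vol}(\phi_t\mathcal T)=\int_{\mathcal T}\det D\phi_t\,d\Theta$ and using $\tfrac{d}{dt}\det D\phi_t=\big((\nabla\cdot X)\circ\phi_t\big)\det D\phi_t$ together with $\phi_t\mathcal T=\mathcal T$,
\[ 0=\frac{d}{dt}\Big|_{t=0}\mathrm{vol}(\phi_t\mathcal T)=\int_{\mathcal T}\nabla\cdot X\,d\Theta=-\kappa N\int_{\mathcal T}R^2\,d\Theta, \]
so $R\equiv0$ a.e. on $\mathcal T$; combined with $\mathrm{vol}(\mathcal Z)=0$ this forces $\mathrm{vol}(\mathcal T)=0$. (Running the same computation with the smaller threshold $\sqrt{1.6D(\Omega)/\kappa}$ and Corollary \ref{C3.2} shows, as an aside, that for every $\kappa>0$ the set of non-phase-locking initial data is already null; the threshold $1/\sqrt N$ is the one matched to the quantitative claims below.)

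It remains to verify assertions (1)--(3) for each $\Theta^0\notin\mathcal T$. For such $\Theta^0$ there is $t^*\in\bbr$ with $R(\phi_{t^*}(\Theta^0))>1/\sqrt N$, so $\kappa>1.6ND(\Omega)$ implies $\kappa>1.6D(\Omega)/R(\phi_{t^*}(\Theta^0))^2$ and Theorem \ref{T3.2} applies to the solution started at $\phi_{t^*}(\Theta^0)$, which is the same orbit as the one through $\Theta^0$ shifted by $t^*$. This gives (1) immediately. For (2), Theorem \ref{T3.2} produces a stable $\gamma(R(\phi_{t^*}(\Theta^0)))$-ensemble of arclength $\le\frac{3\pi}{4(2\gamma(R(\phi_{t^*}(\Theta^0)))-1)}\frac{D(\Omega)}{\kappa}$; since $x\mapsto\gamma(x)$ is continuous and strictly increasing on $(0,1]$ with $\gamma(1/\sqrt N)=\gamma_N$ (note $1/\sqrt N\le0.94$ for $N\ge2$), we get $\gamma(R(\phi_{t^*}(\Theta^0)))>\gamma_N>\tfrac12$, so that ensemble is in particular a stable $\gamma_N$-ensemble and its arclength bound only tightens. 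For (3), a short computation with the equation defining $\phi_1$ shows that $\phi_1(\gamma,\kappa,D(\Omega))$ decreases in $\gamma$, hence $\gamma\mapsto\gamma\cos\phi_1(\gamma,\kappa,D(\Omega))-(1-\gamma)$ increases and the constant $c$ of Theorem \ref{T3.2}(3) decreases in $\gamma$; evaluated at $\gamma=\gamma(R(\phi_{t^*}(\Theta^0)))\ge\gamma_N$ it is therefore at most the value $\frac{\pi}{2\sqrt2(\gamma_N\cos\phi_1(\gamma_N,\kappa,D(\Omega))-(1-\gamma_N))}$, which is the constant $c$ in Theorem \ref{T3.3}(3), and since $\nu_i\ge\nu_j$ the $\limsup$ estimate of Theorem \ref{T3.2}(3) yields the stated one.

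I expect the measure-theoretic core to be the main obstacle: recognizing that the region to track along orbits must be $\{R\le1/\sqrt N\}$ (precisely because, once $\kappa>1.6ND(\Omega)$, leaving this region for a single instant switches on Theorem \ref{T3.2}), and then making the volume-transport argument fully rigorous on $\bbt^N$ — Borel-measurability and flow-invariance of $\mathcal T$, completeness and smoothness of $\phi_t$, and the final appeal to real-analyticity of $R^2$ to conclude $\mathrm{vol}(\mathcal Z)=0$. The divergence formula itself is a one-line identity, and translating the output of Theorem \ref{T3.2} into the normalized quantities $\gamma_N$ and $c$ of Theorem \ref{T3.3} is routine bookkeeping.
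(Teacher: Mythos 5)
Your overall strategy --- a divergence-driven volume-transport argument showing that orbits confined to $\{R\le 1/\sqrt N\}$ form a null set, followed by an application of Theorem \ref{T3.2} at a time where $R>1/\sqrt N$ --- is exactly the strategy of the paper (Section \ref{sec:6}), and your handling of parts (2) and (3) via monotonicity of $\gamma(\cdot)$ and of $\phi_1(\cdot,\kappa,D(\Omega))$ is a legitimate, slightly cleaner alternative to the paper's limiting argument along a sequence $R_n\nearrow 1/\sqrt N$. However, your divergence identity is wrong, and it is the linchpin of the whole section. The diagonal terms $j=i$ contribute nothing to $F_i$ (since $\sin(\theta_i-\theta_i)\equiv 0$), so $\partial F_i/\partial\theta_i=-\frac{\kappa}{N}\sum_{j\ne i}\cos(\theta_j-\theta_i)$ and, by \eqref{order-rel-3}, $\nabla\cdot F=\kappa(1-NR^2)$, not $-\kappa NR^2$. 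Your formula cannot be correct: it is strictly negative off $\{R=0\}$, so integrating it over the closed manifold $\bbt^N$ gives $-\kappa(2\pi)^N<0$, contradicting $\int_{\bbt^N}\nabla\cdot F\,dm=0$; equivalently, your own invariant-set computation applied to $\mathcal T=\bbt^N$ would force $R\equiv 0$ a.e.\ on $\bbt^N$, which is absurd. The correct formula is also what makes the threshold $1/\sqrt N$ meaningful: the divergence is positive precisely where $R<1/\sqrt N$, which is why dispersed states are volume-expanding and cannot trap a set of positive measure (Propositions \ref{P6.1} and \ref{nodispersed}).

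Fortunately your argument survives the correction with only local changes. On your invariant set $\mathcal T$ one has $1-NR^2\ge 0$, so $0=\int_{\mathcal T}\kappa(1-NR^2)\,dm$ forces $R=1/\sqrt N$ a.e.\ on $\mathcal T$ (not $R=0$); the set $\{R=1/\sqrt N\}$ is the zero set of the nontrivial real-analytic function $R^2-\frac1N$ on $\bbt^N$ and is therefore null, whence $m(\mathcal T)=0$ as desired. (Your aside about $\mathcal Z=\{R=0\}$ should be deleted: with the correct divergence, $\nabla\cdot F$ vanishes on $\{R=1/\sqrt N\}$, not on $\mathcal Z$.) For comparison, the paper works with strict thresholds $R_*<1/\sqrt N$, where $\nabla\cdot F\ge\kappa(1-NR_*^2)>0$ yields exponential growth of $m(\Phi_t(U_{R_*}^T))$ against the bound $m(\bbt^N)=(2\pi)^N$, and then takes a countable union over $R_n\nearrow 1/\sqrt N$ followed by a continuity/subsequence argument for parts (2)--(3); your single application at the exact threshold avoids that bookkeeping once the divergence is fixed.
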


As a direct application of Theorem \ref{T3.3}, we also have the following results.
\begin{corollary}\label{C3.3} The following assertions hold.
\begin{enumerate}
\item
Let $N$ and $\Omega=(\nu_1,\cdots,\nu_N)$ be fixed. Then, we have
\[
\kappa_{pc}(\Omega, N)\le 1.6ND(\Omega)\quad \mbox{ for a.e. } \Theta^0.
\]
\item
Let $N=3,~\Omega=(\nu_1,\nu_2, \nu_3)$ and $\kappa>4.8D(\Omega)$. Then for almost all initial data $\Theta^0$, the solution $\Theta(t)$ to \eqref{Ku} converges to the phase-locked state of Theorem \ref{T3.2}.
\end{enumerate}
\end{corollary}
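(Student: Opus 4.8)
Both assertions are essentially bookkeeping consequences of Theorem \ref{T3.3}, Corollary \ref{C3.2} and Theorem \ref{T2.2}, and the plan is simply to unpack them.

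For (1), I would fix $N$ and $\Omega$ and set $\kappa_0:=1.6ND(\Omega)$. Theorem \ref{T3.3} tells us that for each fixed $\kappa>\kappa_0$ the set $B_\kappa$ of initial data that fail to phase-lock under coupling $\kappa$ is Lebesgue-null. The only point needing care is that this null set a priori depends on $\kappa$, whereas Definition \ref{D2.3} asks for phase-locking at \emph{every} $\kappa$ above some threshold; I would handle this via monotonicity. Since \eqref{Ku} is autonomous, Corollary \ref{C3.2} gives $B_\kappa\subseteq\{\Theta^0:\ R(\Theta(t))\le\sqrt{1.6D(\Omega)/\kappa}\ \text{for all }t\in\bbr\}$, and the right-hand set shrinks as $\kappa$ grows. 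Choosing a sequence $\kappa_n\downarrow\kappa_0$, the union $\bigcup_{\kappa>\kappa_0}B_\kappa=\bigcup_n B_{\kappa_n}$ is then a countable union of null sets, hence null; for any $\Theta^0$ outside it and any $\kappa>\kappa_0$ we have $\Theta^0\notin B_\kappa$, so the flow phase-locks, and Definition \ref{D2.3} yields $\kappa_{pc}(\Theta^0,\Omega,N)\le\kappa_0$.

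For (2), I would take $N=3$ and note $4.8D(\Omega)=1.6\cdot3\cdot D(\Omega)$, so Theorem \ref{T3.3} applies verbatim: for a.e.\ $\Theta^0$ complete phase-locking occurs and $\Theta(t)$ carries a stable $\gamma_3$-ensemble of arclength $\le\frac{3\pi}{4(2\gamma_3-1)}\frac{D(\Omega)}{\kappa}$, with $\gamma_3=0.5+\frac{0.35}{0.94\sqrt3}$. A one-line numerical check shows $\gamma_3>\tfrac23$, hence $\gamma_3 N>2$; since the stable ensemble has cardinality at least $\gamma_3 N$ and is an integer $\le N=3$, it must be all of $\mathcal N$. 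So, after a suitable modulo-$2\pi$ shift, the \emph{whole} configuration satisfies $D(\Theta(t))\le\frac{3\pi}{4(2\gamma_3-1)}\frac{D(\Omega)}{\kappa}$ for all large $t$, and a second elementary estimate (using $2\gamma_3-1>0.3125$ and $\kappa>4.8D(\Omega)$) shows this bound is $<\tfrac\pi2$. Picking any $\kappa_e\in(D(\Omega),\kappa)$, one has $\pi-\arcsin\!\big(D(\Omega)/\kappa_e\big)>\tfrac\pi2$, so at some finite time $T$ the configuration meets the hypothesis $D(\Theta(T))<\pi-\arcsin\!\big(D(\Omega)/\kappa_e\big)$ of Theorem \ref{T2.2}; that theorem then forces convergence to the unique (up to $U(1)$) phase-locked state lying in the open quarter-circle and ordered by natural frequencies — exactly the state appearing in Theorem \ref{T3.2} when $N=3$.

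I do not expect a genuine obstacle: all the analytic work sits in Theorems \ref{T3.3} and \ref{T2.2}. The two points that need a little attention are the measure-theoretic uniformity over $\kappa$ in (1) — dealt with by the nested structure of the exceptional sets coming from Corollary \ref{C3.2} — and, in (2), the two numerical inequalities $\gamma_3>\tfrac23$ (so that the stable ensemble exhausts $\mathcal N$) and $\frac{3\pi}{4(2\gamma_3-1)}\cdot\frac{1}{4.8}<\tfrac\pi2$ (so that we land inside the basin of attraction of Theorem \ref{T2.2}).
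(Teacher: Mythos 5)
Your part (2) is correct and is essentially the paper's own argument: the same numerical checks ($\gamma_3>\tfrac{2}{3}$ forces the stable $\gamma_3$-ensemble to exhaust $\mathcal{N}$, and $\frac{3\pi}{4(2\gamma_3-1)}\cdot\frac{1}{4.8}<\frac{\pi}{2}$ puts the whole configuration strictly inside a half-circle), followed by an appeal to Theorem \ref{T2.2} at a finite time.

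For part (1) the paper simply writes that the assertion ``follows directly from Theorem \ref{T3.3}.'' You have correctly identified the quantifier issue hiding in that one-liner: Theorem \ref{T3.3} produces, for each fixed $\kappa>1.6ND(\Omega)$, a $\kappa$-dependent null set of bad initial data, whereas the bound $\kappa_{pc}(\Theta^0,\Omega,N)\le 1.6ND(\Omega)$ of Definition \ref{D2.3} requires a single full-measure set of $\Theta^0$ on which phase-locking holds for \emph{every} such $\kappa$ simultaneously. However, your fix does not close the gap. The set $S_\kappa=\{\Theta^0:\ R(\Theta(t))\le\sqrt{1.6D(\Omega)/\kappa}\ \text{for all } t\}$ supplied by Corollary \ref{C3.2} is defined in terms of the trajectory of \eqref{Ku} \emph{at coupling strength $\kappa$}; for $\kappa'>\kappa$ the trajectories belong to a different vector field, so lowering the threshold from $\sqrt{1.6D(\Omega)/\kappa}$ to $\sqrt{1.6D(\Omega)/\kappa'}$ does not by itself yield $S_{\kappa'}\subseteq S_\kappa$, and hence the sets $B_\kappa$ are not shown to be nested. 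Without that nesting, $\bigcup_{\kappa>\kappa_0}B_\kappa$ is an uncountable union of null sets and need not be null, so the reduction to the countable union $\bigcup_n B_{\kappa_n}$ is unjustified. To be fair, the paper's own proof of (1) silently suffers from the same uniformity-in-$\kappa$ issue, and your version at least makes the difficulty explicit; but to genuinely repair the step one would need either a monotonicity-in-$\kappa$ statement for the dynamics or a construction of an exceptional set that is manifestly independent of $\kappa$, neither of which is provided here or in the paper.
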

\begin{proof}
(i)~The first assertion follows directly from Theorem \ref{T3.3}. \newline

\noindent (ii)~The above theorem tells us that there will be a stable $\gamma_3$-ensemble $\mathcal{A}$ of arclength $\le \ell$, where
\[
\gamma_3=0.5+\frac{0.35}{0.94\sqrt{3}}=0.71497..>\frac{2}{3}
\]
and
\[
\ell=\frac{3\pi}{4(2\gamma_3-1)}\frac{D(\Omega)}{\kappa}<\frac{3\pi}{4(2\gamma_3-1)}\cdot \frac{1}{4.8}=0.363..\cdot \pi<\frac{\pi}{2}.
\]
Hence $\mathcal{A}$ is actually a $1$-ensemble, i.e., $\mathcal{A}=\mathcal{N}$, of arclength $\le \frac{\pi}{2}$, and so Theorem \ref{T2.2} applies at some finite time. 

\end{proof}

\section{Partial phase-locking of majority ensembles}\label{sec:4}
\setcounter{equation}{0}
In this section, we complete the discussion of subsection \ref{sec:3.1} by providing a proof of Theorem \ref{T3.1}. First, we assume that $\mathcal{A}\subset \mathcal{B}\subset\mathcal{N}$ are collections of indices and that $\gamma\in (0.5,1]$ is a real number such that $\mathcal{A}$ is a $\gamma$-ensemble (majority ensemble). Under some conditions, we will show that $\Theta_\mathcal{A}$ forms a stable $\gamma$-ensemble and that $\mathcal{B}$ is a partial phase-locking ensemble in the sense of Definition \ref{D2.3}.

\subsection{Derivation of Gronwall's inequality for $D(\Theta_\mathcal{A})$} In this subsection, we will derive a nonlinear Gronwall inequality for the diameter $D(\Theta_\mathcal{A})$.
\begin{lemma}\label{gronwall0}
Let $\mathcal{A}\subset \mathcal{B}\subset\mathcal{N}$ be collections of indices such that $\mathcal{A}$ is a $\gamma$-ensemble and $D(\Theta_\mathcal{A}(t_0))<2\pi$. Then, we have
\begin{equation}\label{gronwall}
\left.\frac{d}{dt}\right|_{t = t_0}^+D(\Theta_\mathcal{A})\le D(\Omega_\mathcal{B})-2\kappa \Big( \sin\frac{D(\Theta_\mathcal{A})}{2} \Big) \left(\gamma \cos\frac{D(\Theta_\mathcal{A})}{2}-(1-\gamma)\right),
\end{equation}
where $\left.\frac{d}{dt}\right|^+$ is the upper Dini derivative.
\end{lemma}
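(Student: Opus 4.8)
The plan is to work at a fixed time $t_0$ and let $M$ and $m$ denote indices in $\mathcal{A}$ realizing the maximum and minimum phases within $\Theta_\mathcal{A}$, so that $D(\Theta_\mathcal{A}(t_0)) = \theta_M(t_0) - \theta_m(t_0)$ (after suitable relabeling so that $\theta_M \geq \theta_m$). Since $D(\Theta_\mathcal{A})$ is a maximum of finitely many smooth functions, its upper Dini derivative at $t_0$ satisfies $\left.\frac{d}{dt}\right|_{t=t_0}^+ D(\Theta_\mathcal{A}) \leq \dot\theta_M(t_0) - \dot\theta_m(t_0)$, with the right-hand side evaluated for the particular extremal pair. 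I would then substitute the Kuramoto dynamics \eqref{Ku} to write
\[
\dot\theta_M - \dot\theta_m = (\nu_M - \nu_m) + \frac{\kappa}{N}\sum_{j=1}^N \Big[ \sin(\theta_j - \theta_M) - \sin(\theta_j - \theta_m) \Big].
\]
The first term is bounded above by $D(\Omega_\mathcal{B})$ because $M, m \in \mathcal{A} \subset \mathcal{B}$. The crux is to bound the coupling sum from above by $-2\kappa\big(\sin\frac{D_\mathcal{A}}{2}\big)\big(\gamma\cos\frac{D_\mathcal{A}}{2} - (1-\gamma)\big)$, where I abbreviate $D_\mathcal{A} := D(\Theta_\mathcal{A}(t_0))$.

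For the coupling sum, I would first apply the sum-to-product identity
\[
\sin(\theta_j - \theta_M) - \sin(\theta_j - \theta_m) = -2 \sin\frac{\theta_M - \theta_m}{2}\,\cos\Big(\theta_j - \frac{\theta_M + \theta_m}{2}\Big) = -2\sin\frac{D_\mathcal{A}}{2}\,\cos\Big(\theta_j - \frac{\theta_M+\theta_m}{2}\Big),
\]
so that the sum equals $-\frac{2\kappa}{N}\sin\frac{D_\mathcal{A}}{2}\sum_{j=1}^N \cos\big(\theta_j - c\big)$ with $c := \tfrac{1}{2}(\theta_M + \theta_m)$. Since $\sin\frac{D_\mathcal{A}}{2} \geq 0$ for $D_\mathcal{A} \in [0, 2\pi)$, it suffices to bound $\sum_{j=1}^N \cos(\theta_j - c)$ from below by $N\big(\gamma\cos\frac{D_\mathcal{A}}{2} - (1-\gamma)\big)$. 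I split this sum over $j \in \mathcal{A}$ and $j \notin \mathcal{A}$: for the $|\mathcal{A}| \geq \gamma N$ indices in $\mathcal{A}$, each $\theta_j$ lies within distance $D_\mathcal{A}$ of both $\theta_M$ and $\theta_m$, and in fact $|\theta_j - c| \leq \frac{D_\mathcal{A}}{2}$ (here one uses that $c$ is the midpoint of the extremal phases and that after the mod-$2\pi$ normalization all of $\Theta_\mathcal{A}$ sits in an arc of length $D_\mathcal{A} < 2\pi$), hence $\cos(\theta_j - c) \geq \cos\frac{D_\mathcal{A}}{2}$; for the at most $(1-\gamma)N$ remaining indices we use only the trivial bound $\cos(\theta_j - c) \geq -1$. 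Combining gives $\sum_j \cos(\theta_j - c) \geq \gamma N \cos\frac{D_\mathcal{A}}{2} - (1-\gamma)N$, which is exactly what is needed.

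Putting the two pieces together yields
\[
\left.\frac{d}{dt}\right|_{t=t_0}^+ D(\Theta_\mathcal{A}) \leq D(\Omega_\mathcal{B}) - 2\kappa\sin\frac{D_\mathcal{A}}{2}\Big(\gamma\cos\frac{D_\mathcal{A}}{2} - (1-\gamma)\Big),
\]
which is \eqref{gronwall}. The main obstacle I anticipate is the careful justification that the extremal indices realizing $D(\Theta_\mathcal{A})$ lie in $\mathcal{A}$ (so that the natural-frequency term is controlled by $D(\Omega_\mathcal{B})$ rather than $D(\Omega)$) together with the geometric fact that every $\theta_j$ with $j \in \mathcal{A}$ satisfies $|\theta_j - c| \leq \frac{D_\mathcal{A}}{2}$ after the appropriate mod-$2\pi$ shift; this requires being slightly careful with the lift to $\bbr^N$ versus the circle $\bbt^N$ and with the hypothesis $D(\Theta_\mathcal{A}(t_0)) < 2\pi$, which is precisely what guarantees $\Theta_\mathcal{A}$ fits in a half-open arc and makes the midpoint $c$ well-defined. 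The differentiability-almost-everywhere and Dini-derivative bookkeeping for the max of smooth functions is standard and I would only remark on it.
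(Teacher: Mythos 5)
Your proposal is correct and follows essentially the same route as the paper's proof: pick the extremal pair in $\mathcal{A}$, bound the frequency gap by $D(\Omega_\mathcal{B})$, apply the sum-to-product identity, and split the cosine sum over $\mathcal{A}$ (using the midpoint bound $|\theta_k - c|\le D_\mathcal{A}/2$, which in the $\bbr^N$ formulation is just the triangle inequality) and its complement (using $\cos\ge -1$). The mod-$2\pi$ subtlety you flag does not arise since the lemma is stated for the lifted dynamics on $\bbr^N$.
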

\begin{proof} We choose indices $i$ and $j$ such that
\[
\theta_i(t_0)=\min_{k\in\mathcal{A}} \theta_k(t_0),\quad \theta_j(t_0)=\max_{k\in\mathcal{A}} \theta_k(t_0).
\]
Then, for such $i$ and $j$, at time $t=t_0$, we use elementary trigonometry identities to get 
\begin{align*}
\dot{\theta}_j-\dot{\theta}_i&=\nu_j-\nu_i+\frac{\kappa}{N}\sum_{k=1}^N\left[\sin(\theta_k-\theta_j)-\sin(\theta_k-\theta_i)\right]\\
&\le D(\Omega_\mathcal{B})+\frac{2\kappa}{N}\sum_{k=1}^N \cos\left(\theta_k-\frac{\theta_i+\theta_j}{2}\right)\sin\frac{\theta_i-\theta_j}{2}\\
&= D(\Omega_\mathcal{B})-\frac{2\kappa}{N}\sin\frac{\theta_j-\theta_i}{2}\sum_{k=1}^N \cos\left(\theta_k-\frac{\theta_i+\theta_j}{2}\right)\\
&= D(\Omega_\mathcal{B})-2\kappa \sin\frac{D(\Theta_\mathcal{A})}{2}\left[\frac{1}{N}\sum_{k\in\mathcal{A}} \cos\left(\theta_k-\frac{\theta_i+\theta_j}{2}\right)+\frac{1}{N}\sum_{k\in\mathcal{N}\backslash\mathcal{A}} \cos\left(\theta_k-\frac{\theta_i+\theta_j}{2}\right)\right]\\
&\le D(\Omega_\mathcal{B})-2\kappa \sin\frac{D(\Theta_\mathcal{A})}{2}\left[\frac{|\mathcal{A}|}{N}\cos\frac{D(\Theta_\mathcal{A})}{2}-\frac{N-|\mathcal{A}|}{N}\right]\\
&\le D(\Omega_\mathcal{B})-2\kappa \sin\frac{D(\Theta_\mathcal{A})}{2}\left[\gamma\cos\frac{D(\Theta_\mathcal{A})}{2}-(1-\gamma)\right],
\end{align*}
where in the penultimate inequality we used the relation
\[ \Big| \theta_k-\frac{\theta_i+\theta_j}{2} \Big| = \Big |\frac{\theta_k - \theta_i + \theta_k - \theta_j}{2} \Big| \leq \frac{|\theta_k - \theta_i | + |\theta_k - \theta_j|}{2} \leq \frac{D(\Theta_{\mathcal A})}{2},     \]
which follows from the choice of $i$ and $j$.
\end{proof}
\begin{remark}
Note that if $\gamma\le \frac{1}{2}$, then the mean-field term of \eqref{gronwall} would be nonpositive:
\[ \left(\gamma \cos\frac{D(\Theta_\mathcal{A})}{2}-(1-\gamma)\right) \leq 2 \gamma - 1<0, \]
 and hence Lemma \ref{gronwall0} would be practically useless. On the other hand, if $\gamma> \frac{1}{2}$, then the mean-field can be made positive for sufficiently small $D(\Theta_\mathcal{A})$. That is why we assume $\gamma> \frac{1}{2}$, i.e., the case of majority ensembles.
\end{remark}

\noindent Next, we estimate the behavior of the coupling term appearing in the right-hand side of the nonlinear Gronwall inequality \eqref{gronwall}. To this end, consider the function $f:\bbr\rightarrow\bbr$ given by
\begin{equation} \label{NNN-1}
f(\theta) =2\sin\frac{\theta}{2}\left(\gamma\cos\frac{\theta}{2}-(1-\gamma)\right)=\gamma\sin\theta-2(1-\gamma)\sin\frac{\theta}{2},
\end{equation}
so that \eqref{gronwall} becomes
\begin{equation}\label{gronwall2}
\left.\frac{d}{dt}\right|_{t_0}^+D(\Theta_\mathcal{A})\le D(\Omega_\mathcal{B})- \kappa f(D(\Theta_\mathcal{A})) =  \kappa \Big[ \frac{D(\Omega_\mathcal{B})}{\kappa} -    f(D(\Theta_\mathcal{A})) \Big ].
\end{equation}
We list some properties of $f$ in the following lemma.
\begin{lemma} \label{L4.2}
Let $f$ be the function defined in \eqref{NNN-1}. Then we have the following assertions.
\begin{enumerate}
\item
The function $f$ has zeros at $\theta=0$ and $\theta=2\cos^{-1} \Big( \frac{1-\gamma}{\gamma} \Big)$, and $f$ is positive on the interval $(0,2\cos^{-1}\frac{1-\gamma}{\gamma})$.
\item
On the interval $(0,2\cos^{-1}\frac{1-\gamma}{\gamma})$, the function $f$ is strictly concave and attains its maximum at the unique zero $\theta_*=2\cos^{-1} \Big( \frac{1-\gamma+\sqrt{(1-\gamma)^2+8\gamma^2}}{4\gamma} \Big)$ of $f'(\theta_*)=0$ in $(0,2\cos^{-1}\frac{1-\gamma}{\gamma})$. 
\end{enumerate}
\end{lemma}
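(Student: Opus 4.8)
The plan is to analyze the function $f(\theta) = \gamma\sin\theta - 2(1-\gamma)\sin\frac{\theta}{2}$ directly, exploiting the factored form $f(\theta) = 2\sin\frac{\theta}{2}\bigl(\gamma\cos\frac{\theta}{2} - (1-\gamma)\bigr)$ given in \eqref{NNN-1}. For assertion (1), I would first locate the zeros: on the relevant range, $\sin\frac{\theta}{2} = 0$ forces $\theta = 0$, while $\gamma\cos\frac{\theta}{2} = (1-\gamma)$ forces $\cos\frac{\theta}{2} = \frac{1-\gamma}{\gamma}$, i.e. $\theta = 2\cos^{-1}\bigl(\frac{1-\gamma}{\gamma}\bigr)$. (Here one uses $\frac{1}{2} < \gamma \le 1$ so that $\frac{1-\gamma}{\gamma} \in [0,1)$ and the arccosine is defined, giving $2\cos^{-1}\frac{1-\gamma}{\gamma} \in (0,\pi]$.) Positivity on the open interval $\bigl(0, 2\cos^{-1}\frac{1-\gamma}{\gamma}\bigr)$ then follows because both factors are positive there: $\sin\frac{\theta}{2} > 0$ since $\frac{\theta}{2} \in (0,\frac{\pi}{2}]$, and $\gamma\cos\frac{\theta}{2} - (1-\gamma) > 0$ since $\cos$ is strictly decreasing on $[0,\pi/2]$ and $\frac{\theta}{2}$ is still below the threshold $\cos^{-1}\frac{1-\gamma}{\gamma}$.

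For assertion (2), strict concavity on $\bigl(0, 2\cos^{-1}\frac{1-\gamma}{\gamma}\bigr)$ is a calculus check: compute
\[
f''(\theta) = -\gamma\sin\theta + \tfrac{1}{2}(1-\gamma)\sin\tfrac{\theta}{2} = -2\gamma\sin\tfrac{\theta}{2}\cos\tfrac{\theta}{2} + \tfrac12(1-\gamma)\sin\tfrac{\theta}{2} = \sin\tfrac{\theta}{2}\Bigl(\tfrac12(1-\gamma) - 2\gamma\cos\tfrac{\theta}{2}\Bigr),
\]
and on the interval in question $\cos\frac{\theta}{2} > \frac{1-\gamma}{\gamma}$, so $2\gamma\cos\frac{\theta}{2} > 2(1-\gamma) > \frac12(1-\gamma)$, forcing the second factor to be negative while $\sin\frac{\theta}{2} > 0$; hence $f'' < 0$ throughout. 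For the location of the maximizer, set $f'(\theta) = \gamma\cos\theta - (1-\gamma)\cos\frac{\theta}{2} = 0$ and substitute the double-angle identity $\cos\theta = 2\cos^2\frac{\theta}{2} - 1$ to obtain the quadratic $2\gamma x^2 - (1-\gamma)x - \gamma = 0$ in $x = \cos\frac{\theta}{2}$. Its positive root is $x = \frac{(1-\gamma) + \sqrt{(1-\gamma)^2 + 8\gamma^2}}{4\gamma}$ (the other root is negative, as the product of roots is $-\frac12 < 0$), giving $\theta_* = 2\cos^{-1}\bigl(\frac{1-\gamma + \sqrt{(1-\gamma)^2 + 8\gamma^2}}{4\gamma}\bigr)$. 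One still needs to verify this $x$ lies in the admissible range $\bigl(\frac{1-\gamma}{\gamma}, 1\bigr)$ so that $\theta_* \in \bigl(0, 2\cos^{-1}\frac{1-\gamma}{\gamma}\bigr)$: the inequality $x < 1$ reduces after clearing the square root to $6\gamma^2 - 2\gamma < 0$, wait—more carefully, $\sqrt{(1-\gamma)^2+8\gamma^2} < 5\gamma - 1$ squares to $(1-\gamma)^2 + 8\gamma^2 < 25\gamma^2 - 10\gamma + 1$, i.e. $1 - 2\gamma + \gamma^2 + 8\gamma^2 < 25\gamma^2 - 10\gamma + 1$, i.e. $0 < 16\gamma^2 - 8\gamma = 8\gamma(2\gamma - 1)$, true since $\gamma > \frac12$; and $x > \frac{1-\gamma}{\gamma}$ follows since $f'$ at $\theta = 2\cos^{-1}\frac{1-\gamma}{\gamma}$ equals $f(\cdot)$'s derivative where $f = 0$ but $f > 0$ just left, so $f'$ is negative there, placing $\theta_*$ strictly inside. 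Finally, strict concavity plus a sign-of-$f'$ argument (positive near $0$ since $f(0) = 0$ and $f > 0$ just to the right, negative near the right endpoint) shows $\theta_*$ is the unique critical point and the global maximizer on the interval.

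The only mildly delicate point is confirming $\theta_* \in \bigl(0, 2\cos^{-1}\frac{1-\gamma}{\gamma}\bigr)$ rather than outside it; this is where the hypothesis $\gamma > \frac12$ is genuinely used, via the inequality $8\gamma(2\gamma-1) > 0$ above. Everything else is routine trigonometric bookkeeping, so I would present it compactly, emphasizing the factorization in the zero/positivity step and the quadratic substitution $x = \cos\frac{\theta}{2}$ in the critical-point step.
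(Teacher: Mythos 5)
Your proof is correct and follows essentially the same route as the paper: the factorization $f(\theta)=2\sin\frac{\theta}{2}\bigl(\gamma\cos\frac{\theta}{2}-(1-\gamma)\bigr)$ for the zeros and positivity, the same second-derivative computation for strict concavity, and the same double-angle substitution reducing $f'(\theta_*)=0$ to the quadratic $2\gamma x^2-(1-\gamma)x-\gamma=0$ in $x=\cos\frac{\theta_*}{2}$. You are in fact slightly more careful than the paper in explicitly verifying that $\theta_*$ lies strictly inside $\bigl(0,2\cos^{-1}\frac{1-\gamma}{\gamma}\bigr)$ via the inequality $8\gamma(2\gamma-1)>0$ and the sign of $f'$ at the endpoints, a point the paper leaves implicit.
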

\begin{proof}
(i)~It follows from \eqref{NNN-1} that 
\[ f(\theta) = 0 \quad \Longleftrightarrow \quad \sin\frac{\theta}{2} = 0 \quad \mbox{or} \quad \gamma\cos\frac{\theta}{2}-(1-\gamma) = 0. \]
This yields the desired roots of $f$. It is easy to see that $f$ is positive on the interval $(0,2\cos^{-1}\frac{1-\gamma}{\gamma})$. \newline

\noindent (ii)~By direct calculation, we have
\begin{align*}
f''(\theta)&=-\gamma\sin\theta+\frac{1-\gamma}{2}\sin\frac{\theta}{2}=\sin\frac{\theta}{2}\left(-2\gamma\cos\frac{\theta}{2}+\frac{1-\gamma}{2}\right)\\
&<\sin\frac{\theta}{2}\left(-2(1-\gamma)+\frac{1-\gamma}{2}\right)=-\frac{3(1-\gamma)}{2}\sin\frac{\theta}{2}<0,\quad \theta\in \Big(0,2\cos^{-1}\frac{1-\gamma}{\gamma} \Big).
\end{align*}
Thus $f$ is strictly concave on $(0,2\cos^{-1}\frac{1-\gamma}{\gamma})$, and has a maximum at the unique zero $\theta_*=\theta_*(\gamma)$ of $f'(\theta_*)=0$ in $(0,2\cos^{-1}\frac{1-\gamma}{\gamma})$. We calculate $\theta_*$ as follows:
\begin{align}\label{max}
\begin{aligned}
f'(\theta_*)=0 &\quad\Longleftrightarrow\quad \gamma\cos\theta_*-(1-\gamma)\cos\frac{\theta_*}{2}=0\\
&\quad \Longleftrightarrow\quad 2\gamma \cos^2\frac{\theta_*}{2}-(1-\gamma)\cos\frac{\theta_*}{2}-\gamma=0\\
&\quad\Longleftrightarrow\quad \cos\frac{\theta_*}{2}=\frac{1-\gamma+\sqrt{(1-\gamma)^2+8\gamma^2}}{4\gamma} \qquad (\because \cos\frac{\theta_*}{2}>0)\\
&\quad\Longleftrightarrow\quad \theta_*=2\cos^{-1} \Big( \frac{1-\gamma+\sqrt{(1-\gamma)^2+8\gamma^2}}{4\gamma} \Big).
\end{aligned}
\end{align}
\end{proof}

\bigskip

Note that if $\frac{D(\Omega_\mathcal{B})}{\kappa}<f(\theta_*)$, then $f(\theta)=\frac{D(\Omega_\mathcal{B})}{\kappa}$ will have two zeros $\phi_1=\phi_1(\gamma, \kappa,D(\Omega_\mathcal{B}))$ and $\phi_2=\phi_2(\gamma, \kappa,D(\Omega_\mathcal{B}))$ in $(0,2\cos^{-1}\frac{1-\gamma}{\gamma})$, with the ordering
\[
0<\phi_1<\theta_*<\phi_2<2\cos^{-1} \Big( \frac{1-\gamma}{\gamma} \Big).
\]

Next, we collect some facts which will be useful later.
\begin{lemma}\label{f-estimates}
Let $\theta_*, \phi_1$ and $\phi_2$ be defined as above. Then the following estimates hold.
\begin{eqnarray*}
&& (i)~\phi_1< \frac{3\pi}{4(2\gamma-1)}\frac{D(\Omega_\mathcal{B})}{\kappa}, \quad  \cos^{-1} \Big( \frac{1-\gamma}{\gamma} \Big) \le\theta_*, \\
&& (ii)~f\Big(\cos^{-1}\frac{1-\gamma}{\gamma} \Big)=\frac{(2\gamma-1)^{3/2}}{\sqrt{2\gamma}}\frac{2-\gamma}{\sqrt{\gamma/2}+(1-\gamma)}.
\end{eqnarray*}
\end{lemma}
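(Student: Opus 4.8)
The plan is to prove the two estimates by elementary calculus and trigonometry applied to the explicit formula $f(\theta)=\gamma\sin\theta-2(1-\gamma)\sin\frac{\theta}{2}$ and the expression for $\theta_*$ obtained in \eqref{max}.

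For part (ii), this is just a direct substitution: set $\theta=\cos^{-1}\frac{1-\gamma}{\gamma}$, so that $\frac{\theta}{2}$ is a half-angle and $\cos\theta=\frac{1-\gamma}{\gamma}$. First I would compute $\cos^2\frac{\theta}{2}=\frac{1+\cos\theta}{2}=\frac{1}{2\gamma}$ and $\sin^2\frac{\theta}{2}=\frac{1-\cos\theta}{2}=\frac{2\gamma-1}{2\gamma}$, hence $\cos\frac{\theta}{2}=\frac{1}{\sqrt{2\gamma}}$ and $\sin\frac{\theta}{2}=\sqrt{\frac{2\gamma-1}{2\gamma}}$. Plugging into the factored form $f(\theta)=2\sin\frac{\theta}{2}\bigl(\gamma\cos\frac{\theta}{2}-(1-\gamma)\bigr)$ from \eqref{NNN-1} gives $f(\theta)=2\sqrt{\frac{2\gamma-1}{2\gamma}}\bigl(\gamma\cdot\frac{1}{\sqrt{2\gamma}}-(1-\gamma)\bigr)$. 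To match the stated right-hand side one rationalizes: write $\gamma\cdot\frac{1}{\sqrt{2\gamma}}-(1-\gamma)=\sqrt{\gamma/2}-(1-\gamma)$, multiply numerator and denominator by $\sqrt{\gamma/2}+(1-\gamma)$, and use $\gamma/2-(1-\gamma)^2=\tfrac12(2\gamma-1)(2-\gamma)$ after simplification; combining with the prefactor $2\sqrt{\frac{2\gamma-1}{2\gamma}}$ yields $\frac{(2\gamma-1)^{3/2}}{\sqrt{2\gamma}}\cdot\frac{2-\gamma}{\sqrt{\gamma/2}+(1-\gamma)}$, as claimed. The only thing to check is that the computation of $\gamma/2-(1-\gamma)^2$ indeed factors as $\tfrac12(2\gamma-1)(2-\gamma)$; this is a routine polynomial identity.

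For part (i), the inequality $\cos^{-1}\frac{1-\gamma}{\gamma}\le\theta_*$ follows from the monotonicity of $\cos^{-1}$ together with the elementary comparison $\frac{1-\gamma+\sqrt{(1-\gamma)^2+8\gamma^2}}{4\gamma}\le\frac{1-\gamma}{\gamma}$, i.e. $\sqrt{(1-\gamma)^2+8\gamma^2}\le 3(1-\gamma)$; squaring, this is $8\gamma^2\le 8(1-\gamma)^2$, which would be false — so in fact the correct direction is that the argument of $\cos^{-1}$ defining $\theta_*$ is \emph{smaller} than $\frac{1-\gamma}{\gamma}$ precisely when... here I must be careful with signs, and the clean way is: recall from Lemma \ref{L4.2}(i)--(ii) that $f>0$ on $(0,2\cos^{-1}\frac{1-\gamma}{\gamma})$, $f$ is strictly concave there with unique interior maximizer $\theta_*$, and $f(0)=f(2\cos^{-1}\frac{1-\gamma}{\gamma})=0$. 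Since $f$ is concave and nonnegative on that interval with $f\ge 0$ also at the midpoint $\cos^{-1}\frac{1-\gamma}{\gamma}$, and since the maximizer of a strictly concave function lies where $f'$ changes sign, it suffices to show $f'\bigl(\cos^{-1}\frac{1-\gamma}{\gamma}\bigr)\ge 0$, which by concavity forces $\theta_*\ge\cos^{-1}\frac{1-\gamma}{\gamma}$. Computing $f'(\theta)=\gamma\cos\theta-(1-\gamma)\cos\frac{\theta}{2}$ at $\theta=\cos^{-1}\frac{1-\gamma}{\gamma}$ gives $\gamma\cdot\frac{1-\gamma}{\gamma}-(1-\gamma)\cdot\frac{1}{\sqrt{2\gamma}}=(1-\gamma)\bigl(1-\frac{1}{\sqrt{2\gamma}}\bigr)\ge 0$ since $\gamma>\tfrac12$, which is exactly what is needed.

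For the bound $\phi_1<\frac{3\pi}{4(2\gamma-1)}\frac{D(\Omega_\mathcal{B})}{\kappa}$, the idea is a lower bound on $f$ of the form $f(\theta)\ge c\,\theta$ for $\theta$ in the relevant range, which then inverts $f(\phi_1)=\frac{D(\Omega_\mathcal{B})}{\kappa}$ into an upper bound on $\phi_1$. Concretely, since $f$ is concave on $(0,\theta_*)$ and $f(0)=0$, the chord bound gives $f(\theta)\ge \frac{f(\theta_*)}{\theta_*}\theta$ for $\theta\in[0,\theta_*]$; combined with part (i)'s lower bound $\theta_*\ge\cos^{-1}\frac{1-\gamma}{\gamma}$ (hence $\theta_*\le$ something, or rather using $\theta_* < \pi$) and the explicit value of $f(\theta_*)$ or of $f\bigl(\cos^{-1}\frac{1-\gamma}{\gamma}\bigr)$ from part (ii), one gets $\frac{f(\theta_*)}{\theta_*}\ge\frac{f(\cos^{-1}\frac{1-\gamma}{\gamma})}{\pi/\cdots}$. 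A cleaner route, and the one I would actually carry out: on $(0,\pi)$ one has $\sin\frac{\theta}{2}\ge\frac{\theta}{\pi}$ and $\cos\frac{\theta}{2}-(1-\gamma)$ bounded below, but the sharp constant $\frac{3\pi}{4(2\gamma-1)}$ suggests instead using $f(\theta)=2\sin\frac{\theta}{2}(\gamma\cos\frac{\theta}{2}-(1-\gamma))$ with the bounds $\sin\frac{\theta}{2}\ge\frac{\theta}{\pi}$ on $[0,\pi]$ and $\gamma\cos\frac{\theta}{2}-(1-\gamma)\ge$ (its value somewhere) $\ge\frac{2}{3}(2\gamma-1)$ for $\theta\le\phi_1$, since $\phi_1<\theta_*$ and at $\theta_*$ one checks $\gamma\cos\frac{\theta_*}{2}-(1-\gamma)\ge\frac{2}{3}(2\gamma-1)$. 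Then $\frac{D(\Omega_\mathcal{B})}{\kappa}=f(\phi_1)\ge\frac{\phi_1}{\pi}\cdot 2\cdot\frac{2}{3}(2\gamma-1)=\frac{4(2\gamma-1)}{3\pi}\phi_1$, which rearranges to the claim. The main obstacle here is verifying the constant $\frac{2}{3}(2\gamma-1)$ lower bound for $\gamma\cos\frac{\theta}{2}-(1-\gamma)$ on the interval $[0,\phi_1]\subset[0,\theta_*]$: this requires either evaluating the factor at $\theta_*$ using \eqref{max}, or noting that $\cos\frac{\phi_1}{2}$ cannot be too small because $\phi_1<\theta_*$ and $\cos\frac{\theta_*}{2}=\frac{1-\gamma+\sqrt{(1-\gamma)^2+8\gamma^2}}{4\gamma}$ — a short monotonicity and algebra check. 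This is the only genuinely delicate step; the rest is bookkeeping.
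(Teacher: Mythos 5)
Your proposal is correct, and for part (ii) and the bound $\phi_1<\frac{3\pi}{4(2\gamma-1)}\frac{D(\Omega_\mathcal{B})}{\kappa}$ it is essentially the paper's argument: the paper also rationalizes $\sqrt{\gamma/2}-(1-\gamma)$ via the identity $\gamma/2-(1-\gamma)^2=\tfrac12(2\gamma-1)(2-\gamma)$, and for the $\phi_1$ bound it uses the concavity chord $f(\theta)>\frac{f(\theta_*)}{\theta_*}\theta$ together with exactly your two ingredients, namely $\frac{\sin x}{x}\ge\frac{2}{\pi}$ on $(0,\pi/2)$ and the rationalized lower bound $\gamma\cos\frac{\theta_*}{2}-(1-\gamma)=\frac{2(2\gamma-1)}{\sqrt{(1-\gamma)^2+8\gamma^2}+3(1-\gamma)}>\frac{2(2\gamma-1)}{3}$; your repackaging (bounding $f(\phi_1)$ directly from the factored form, using that the cosine factor is decreasing and $\phi_1<\theta_*$) is equivalent. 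The one place you genuinely diverge is the inequality $\cos^{-1}\frac{1-\gamma}{\gamma}\le\theta_*$: after correctly discarding the naive comparison of the arguments of $\cos^{-1}$ (which confuses $\cos\theta_*$ with $\cos\frac{\theta_*}{2}$), you prove it by checking $f'\bigl(\cos^{-1}\frac{1-\gamma}{\gamma}\bigr)=(1-\gamma)\bigl(1-\frac{1}{\sqrt{2\gamma}}\bigr)\ge0$ and invoking strict concavity, whereas the paper argues directly from the critical-point relation $\cos\theta_*=\frac{1-\gamma}{\gamma}\cos\frac{\theta_*}{2}\le\frac{1-\gamma}{\gamma}$ in \eqref{max}. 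Both are two-line arguments resting on the same computation of $\theta_*$; the paper's avoids re-deriving $\cos\frac{\psi}{2}=\frac{1}{\sqrt{2\gamma}}$ at this stage, while yours avoids invoking the explicit formula for $\cos\frac{\theta_*}{2}$ at all. No gaps.
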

\begin{proof}
(i)~For the first estimate, we use the strict concavity of $f$ to get
\begin{equation}\label{f-concav}
f(\theta)> \frac{f(\theta_*)}{\theta_*}\theta,\quad \theta\in(0, \theta_*).
\end{equation}
On the other hand, we have
\begin{align*}
\begin{aligned}
\frac{f(\theta_*)}{\theta_*}&=\frac{\sin\frac{\theta_*}{2}}{\theta_*/2}\left(\gamma\cos\frac{\theta_*}{2}-(1-\gamma)\right) \stackrel{\eqref{max}}{=}\frac{\sin\frac{\theta_*}{2}}{\theta_*/2}\frac{\sqrt{(1-\gamma)^2+8\gamma^2}-3(1-\gamma)}{4}\\
&=\frac{\sin\frac{\theta_*}{2}}{\theta_*/2}\frac{2(2\gamma-1)}{\sqrt{(1-\gamma)^2+8\gamma^2}+3(1-\gamma)} >\frac{\sin\frac{\theta_*}{2}}{\theta_*/2}\frac{2(2\gamma-1)}{\sqrt{\gamma^2+8\gamma^2}+3(1-\gamma)}~~(\because 0\le 1-\gamma<\gamma)\\
&=\frac{\sin\frac{\theta_*}{2}}{\theta_*/2}\frac{2(2\gamma-1)}{3} \ge\frac{2}{\pi}\frac{2(2\gamma-1)}{3} \quad (\because x\mapsto \frac{\sin x}{x} \mbox{ is decreasing on } \Big(0,\frac{\pi}{2} \Big)\mbox{ and }\theta_*\in(0,\pi))\\
&=\frac{4(2\gamma-1)}{3\pi}>0.
\end{aligned}
\end{align*}
Note that all terms involving $\gamma$ in the above calculation are positive because $\gamma> \frac{1}{2}$. Now we substitute $\theta=\phi_1$ into \eqref{f-concav} to obtain
\[
\frac{D(\Omega_\mathcal{B})}{\kappa}=f(\phi_1)> \frac{f(\theta_*)}{\theta_*}\phi_1> \frac{4(2\gamma-1)}{3\pi}\phi_1.
\]
This yields
\[
\phi_1< \frac{3\pi}{4(2\gamma-1)}\frac{D(\Omega_\mathcal{B})}{\kappa}. \]

\noindent For the second estimate, note that 
\begin{align*}
\cos^{-1}\Big( \frac{1-\gamma}{\gamma} \Big) \le\theta_* & \quad\Longleftrightarrow\quad \frac{1-\gamma}{\gamma}\ge\cos\theta_*\\
&\quad\Longleftrightarrow\quad \frac{1-\gamma}{\gamma}\ge\frac{1-\gamma}{\gamma}\cos\frac{\theta_*}{2}~~(\because \mbox{first line of }\eqref{max})\\
&\quad\Longleftarrow\quad 1\ge \cos\frac{\theta_*}{2}.
\end{align*}

\noindent (ii)~We set
\[ \psi :=\cos^{-1} \Big( \frac{1-\gamma}{\gamma} \Big), \quad \mbox{i.e.,} \quad \cos \psi = \frac{1-\gamma}{\gamma}. \]
We can calculate
\[
\cos\frac{\psi}{2}=\sqrt{\frac{1+\cos\psi}{2}}=\frac{1}{\sqrt{2\gamma}},\qquad 
\sin\frac{\psi}{2}=\sqrt{1-\cos^2\frac{\psi}{2}}=\sqrt{\frac{2\gamma-1}{2\gamma}},
\]
and thus
\begin{align*}
\begin{aligned}
f(\psi) &=2\sin\frac{\psi}{2}\left(\gamma\cos\frac{\psi}{2}-(1-\gamma)\right)=2\sqrt{\frac{2\gamma-1}{2\gamma}}\left(\sqrt{\gamma/2}-(1-\gamma)\right)\\
&=\sqrt{\frac{2\gamma-1}{2\gamma}}\frac{(2-\gamma)(2\gamma-1)}{\sqrt{\gamma/2}+(1-\gamma)}=\frac{(2\gamma-1)^{3/2}}{\sqrt{2\gamma}}\frac{2-\gamma}{\sqrt{\gamma/2}+(1-\gamma)}.
\end{aligned}
\end{align*}
\end{proof}

\subsection{Proof of Theorem \ref{T3.1}} Now, we are ready to present our proof of the result Theorem \ref{T3.1} on the formation of stable $\gamma$-ensembles and partial phase-locking.\newline

\noindent (1) (The ensemble $\Theta_{{\mathcal A}}$ is stable): Condition \eqref{B-0} tells us that
\[
f(\ell)>\frac{D(\Omega_\mathcal{B})}{\kappa},
\]
and since $f$ has its maximum in $[0,2\cos^{-1}\frac{1-\gamma}{\gamma}]$ at $\theta_*$, we also have 
\[ f(\theta_*)>\frac{D(\Omega_\mathcal{B})}{\kappa}. \] 
The two zeros $\phi_1$ and $\phi_2$ of $f(\theta)=\frac{D(\Omega_\mathcal{B})}{K}$ exist, and by the concavity of $f$ we should have the ordering
\[
0<\phi_1<\ell<\phi_2<2\cos^{-1} \Big( \frac{1-\gamma}{\gamma} \Big).
\]
Let $\psi_1,\psi_2\in\bbr$ be any reals with
\[
\phi_1<\psi_1<\ell<\psi_2<\phi_2.
\]
Then there exists a positive constant $c>0$ such that
\[
f(\theta)>\frac{D(\Omega_\mathcal{B})}{\kappa}+c,\quad \theta\in [\psi_1,\psi_2].
\]
On the other hand, it follows from Lemma \ref{gronwall0} and \eqref{gronwall2} that 
\[
\left.\frac{d}{dt}\right|_{t_0}^+D(\Theta_\mathcal{A})\le D(\Omega_\mathcal{B})- \kappa f(D(\Theta_\mathcal{A}))<D(\Omega_\mathcal{B})- \kappa \left[\frac{D(\Omega_\mathcal{B})}{\kappa}+c\right]=-\kappa c,
\]
for all $t_0$ with $D(\Theta_\mathcal{A}(t_0))\in[\psi_1,\psi_2]$. By a standard exit-time argument, we can easily establish that
\[
D(\Theta_\mathcal{A})\le \ell,\quad \forall t\ge 0,
\]
and that there is a finite time $T\ge 0$  such that
\[
D(\Theta_\mathcal{A})\le\psi_1,\quad \forall t\ge T.
\]
Since $\psi_1\in (\phi_1,\ell)$ was arbitrary, we conclude that
\[
\limsup_{t\rightarrow\infty}D(\Theta_\mathcal{A})\le \phi_1.
\]
\vspace{0.5cm}

\noindent (2) (The ensemble $\Theta_{{\mathcal B}}$ is partially phase-locked): We set
\[
\theta_{min}=\min_{k\in\mathcal{A}}\theta_k,\quad \theta_{max}=\max_{k\in\mathcal{A}}\theta_k.
\]
For any $i\in \mathcal{B}\backslash\mathcal{A}$, it suffices to prove that
\begin{equation}\label{bdddiff}
\sup_{0 \leq t < \infty} |\theta_i(t)-\theta_{min}(t)|<\infty\quad\mbox{and}\quad \sup_{0 \leq t < \infty} |\theta_i(t)-\theta_{max}(t)|<\infty.
\end{equation}
We consider the following two cases.

\vspace{0.3cm}
\noindent $\diamond$ Case A. There exists a time $T>0$ and an integer $n\in\bbz$ for which 
\[ \theta_i(T)\in [\theta_{min}(T)+2n\pi,\theta_{max}(T)+2n\pi].  \]
In this case, we consider the solution $\tilde{\Theta}$ to \eqref{Ku} with initial data
\[
\tilde{\theta}_k^0=\theta_k(T)~\mbox{for}~k\neq i,\quad \mbox{and}\quad \tilde{\theta}_i^0=\theta_i(T)-2n\pi,
\]
and apply Theorem \ref{T3.1} (1) to the alternative ensemble $\tilde{\mathcal{A}}=\mathcal{A}\cup \{i\}$. We then have $D(\tilde{\Theta}_{\tilde{\mathcal{A}}}(t))\le\ell$ for $t\ge 0$, which translates back to
\[
 |\theta_i(t)-\theta_{min}(t)|\le 2|n|\pi+\ell\quad\mbox{and}\quad  |\theta_i(t)-\theta_{max}(t)|\le 2|n|\pi+\ell\quad \mbox{for }t\ge T.
\]
Thus \eqref{bdddiff} is true.

\vspace{0.3cm}
\noindent $\diamond$ Case B. There exists an integer $n\in\bbz$ such that 
\[ \theta_i(t)\in (\theta_{max}(T)+2n\pi,\theta_{min}(T)+2(n+1)\pi) \quad \mbox{for all $t\ge 0$}. \]
In this case, \eqref{bdddiff} is obvious.

\vspace{0.3cm}

\noindent (3)~(Well-ordering of $\Theta_{{\mathcal A}}$): For $i,j\in\mathcal{A}$ we repeat the calculation done in Lemma \ref{gronwall0}:
\begin{align}\label{individualdiff}
\begin{aligned}
\dot{\theta}_i-\dot{\theta}_j&=\nu_i-\nu_j+\frac{\kappa}{N}\sum_{k=1}^N\left[\sin(\theta_k-\theta_i)-\sin(\theta_k-\theta_j)\right]\\
&= \nu_i-\nu_j-2 \kappa \sin \Big( \frac{\theta_i-\theta_j}{2} \Big) \cdot\frac{1}{N}\sum_{k=1}^N \cos\left(\theta_k-\frac{\theta_i+\theta_j}{2}\right),
\end{aligned}
\end{align}
where
\begin{equation}\label{crude-mf0}
\frac{1}{N}\sum_{k=1}^N \cos\left(\theta_k-\frac{\theta_i+\theta_j}{2}\right)\le 1
\end{equation}
and
\begin{align}\label{crude-mf}
\begin{aligned}
&\frac{1}{N}\sum_{k=1}^N \cos\left(\theta_k-\frac{\theta_i+\theta_j}{2}\right) \\
& \hspace{0.5cm} = \frac{1}{N}\sum_{k\in\mathcal{A}} \cos\left(\theta_k-\frac{\theta_i+\theta_j}{2}\right)+\frac{1}{N}\sum_{k\in\mathcal{N}\backslash\mathcal{A}} \cos\left(\theta_k-\frac{\theta_i+\theta_j}{2}\right)\\\
&\hspace{0.5cm} \ge\frac{|\mathcal{A}|}{N}\cos D(\Theta_\mathcal{A})-\frac{N-|\mathcal{A}|}{N} \ge \gamma\cos D(\Theta_\mathcal{A})-(1-\gamma).
\end{aligned}
\end{align}
(Since we are not assuming that $\theta_i$ and $\theta_j$ are extremal as in Lemma \ref{gronwall0}, we cannot say that $\left|\theta_k-\frac{\theta_i+\theta_j}{2}\right|\le \frac{D(\Omega_\mathcal{B})}{2}$. We will have to be content with $\left|\theta_k-\frac{\theta_i+\theta_j}{2}\right|\le D(\Omega_\mathcal{B})$.) We would like \eqref{crude-mf} to be positive, and by Theorem \ref{T3.1}, this would roughly be true if $\phi_1<\cos^{-1}\frac{1-\gamma}{\gamma}$. 

In fact, since we have $\cos^{-1}\frac{1-\gamma}{\gamma}\le\theta_*$ by Lemma \ref{f-estimates}, and since $f$ is strictly increasing on $[0,\theta_*]$, we obtain
\begin{align*}
\begin{aligned}
\phi_1< \cos^{-1} \Big( \frac{1-\gamma}{\gamma} \Big)  \quad &\Longleftrightarrow \quad  f(\phi_1)< f(\cos^{-1}\Big(\frac{1-\gamma}{\gamma} \Big) )\\
&\Longleftrightarrow \quad  \frac{D(\Omega_\mathcal{B})}{\kappa}< \frac{(2\gamma-1)^{3/2}}{\sqrt{2\gamma}}\frac{2-\gamma}{\sqrt{\gamma/2}+(1-\gamma)} \quad (\because \mbox{Lemma }\ref{f-estimates}~(ii)).
\end{aligned}
\end{align*}
This is exactly condition \eqref{additional0}, and so we see that $\phi_1<\cos^{-1}\frac{1-\gamma}{\gamma}$ is true. The positivity of \eqref{crude-mf} is then in the following sense: for any $\psi_1\in(\phi_1,\cos^{-1}\frac{1-\gamma}{\gamma})$, we infer by Theorem \ref{T3.1} (1) that there exists a finite time $T>0$ such that
\begin{equation}\label{trapped}
D(\Theta_\mathcal{A}(t))\le \psi_1,\quad \forall t\ge T,
\end{equation}
and so, by \eqref{crude-mf},
\begin{equation}\label{crude-mf2}
\frac{1}{N}\sum_{k=1}^N \cos\left(\theta_k-\frac{\theta_i+\theta_j}{2}\right)\ge \gamma\cos D(\Theta_\mathcal{A})-(1-\gamma)\ge \gamma\cos \psi_1-(1-\gamma)>0,\quad\forall t\ge T.
\end{equation}
With the estimates \eqref{crude-mf0} and \eqref{crude-mf2} on the mean-field term at our disposal, we are now ready to prove the desired statement. \newline

We consider two cases:
\[ \mbox{Either} \quad \nu_i = \nu_j \quad \mbox{or} \quad \nu_i > \nu_j. \]

\noindent$\diamond$ Case A ($\nu_i=\nu_j$): In this case, \eqref{individualdiff} becomes
\[
\dot{\theta}_i-\dot{\theta}_j=- \frac{2\kappa}{N}\sum_{k=1}^N \cos\left(\theta_k-\frac{\theta_i+\theta_j}{2}\right)\cdot\sin\frac{\theta_i-\theta_j}{2},
\]
and thus for $t\ge T$ we have
\begin{align*}
\frac{d}{dt}|\theta_i-\theta_j|&=-\frac{2\kappa}{N}\sum_{k=1}^N \cos\left(\theta_k-\frac{\theta_i+\theta_j}{2}\right)\cdot \textrm{sgn}(\theta_i-\theta_j)\sin\frac{\theta_i-\theta_j}{2}\\
&=-\frac{2\kappa}{N}\sum_{k=1}^N \cos\left(\theta_k-\frac{\theta_i+\theta_j}{2}\right)\cdot \sin\frac{|\theta_i-\theta_j|}{2}\\
&\le -2\kappa (\gamma\cos\psi_1-(1-\gamma))\sin\frac{|\theta_i-\theta_j|}{2}~(\because \eqref{crude-mf2})\\
&\le -2\kappa (\gamma\cos\psi_1-(1-\gamma))\cdot\frac{1}{\pi}|\theta_i-\theta_j|~(\because 0\le |\theta_i-\theta_j|\le\pi)).
\end{align*}
Hence $|\theta_i-\theta_j|$ converges exponentially to zero.

\vspace{0.3cm}
\noindent$\diamond$ Case B. $\nu_i>\nu_j$: Recall \eqref{trapped}, which implies 
\[ |\theta_i(t)-\theta_j(t)|\le \psi_1 \quad \mbox{for $t\ge T$}. \]
Thus, by \eqref{individualdiff} and \eqref{crude-mf2}, we have
\[
\dot{\theta}_i(t_0)-\dot{\theta}_j(t_0)\ge \nu_i-\nu_j>0,\quad \mbox{for all }t_0\ge T\mbox{ such that } -\psi_1\le\theta_i(t_0)-\theta_j(t_0)\le 0.
\]
Therefore, by an exit-time argument, we may find a time $T_1\ge T$ such that
\[
0\le \theta_i(t)-\theta_j(t)\le \psi_1,\quad \forall t\ge T_1.
\]
Now, if we use \eqref{crude-mf0} along with the fact that $0\le \sin x\le x$ for $x\in [0,\frac{\pi}{4}]$, then \eqref{individualdiff} becomes
\begin{equation}\label{gronwall5}
\dot{\theta}_i-\dot{\theta}_j\ge\nu_i-\nu_j- \kappa (\theta_i-\theta_j),\quad t\ge T_1,
\end{equation}
and, analogously, if we use \eqref{crude-mf2} along with the fact that $0\le \frac{2\sqrt{2}}{\pi}x\le \sin x$ for $x\in[0,\frac{\pi}{4}]$, then \eqref{individualdiff} becomes
\begin{equation}\label{gronwall6}
\dot{\theta}_i-\dot{\theta}_j\le \nu_i-\nu_j-\frac{2\sqrt{2}}{\pi}(\gamma\cos\psi_1-(1-\gamma)) \kappa (\theta_i-\theta_j).
\end{equation}
By solving the Gronwall inequalities \eqref{gronwall5} and \eqref{gronwall6}, we may easily conclude that
\[
\frac{\nu_i-\nu_j}{\kappa}\le \liminf_{t\rightarrow\infty}(\theta_i(t)-\theta_j(t))\le \limsup_{t\rightarrow\infty}(\theta_i(t)-\theta_j(t))\le \frac{\pi}{2\sqrt{2}(\gamma\cos\psi_1-(1-\gamma))}\frac{\nu_i-\nu_j}{\kappa}.
\]
Recalling that $\psi_1\in(\phi_1,\cos^{-1}\frac{1-\gamma}{\gamma})$ was arbitrary, we conclude
\[
\frac{\nu_i-\nu_j}{\kappa}\le \liminf_{t\rightarrow\infty}(\theta_i(t)-\theta_j(t))\le \limsup_{t\rightarrow\infty}(\theta_i(t)-\theta_j(t))\le \frac{\pi}{2\sqrt{2}(\gamma\cos\phi_1-(1-\gamma))}\frac{\nu_i-\nu_j}{\kappa}.
\]
This completes the proof of Theorem \ref{T3.1}.

\subsection{A three-oscillator system} \label{sec:4.3}
With Theorem \ref{T3.1} and consequently Corollary \ref{C3.1} at our disposal, we can now prove complete phase-locking in the case $N =3$:
\begin{align}
\begin{aligned} \label{t-o}
{\dot \theta}_1 = \nu_1 + \frac{\kappa}{3}\Big(  \sin(\theta_2 - \theta_1) + \sin(\theta_3 - \theta_1)  \Big),  \\
{\dot \theta}_2 = \nu_2 + \frac{\kappa}{3}\Big(  \sin(\theta_1 - \theta_2) + \sin(\theta_3 - \theta_2)  \Big),   \\
{\dot \theta}_3 = \nu_3 + \frac{\kappa}{3}\Big(  \sin(\theta_1 - \theta_3) + \sin(\theta_2 - \theta_3)  \Big).
\end{aligned}
\end{align}
\begin{proposition}\label{P4.1}
Suppose that the coupling strength $\kappa$ satisfies 
\[ \kappa >\frac{\sqrt{138+22\sqrt{33}}}{4}D(\Omega)\approx 4.0649D(\Omega).
\]
Then, the Kuramoto flow $\Theta(t)$ exhibits complete phase-locking.
\end{proposition}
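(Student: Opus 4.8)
The plan is to run a dichotomy based on whether two of the three oscillators ever come close on the circle, using Corollary \ref{C3.1} in the ``close'' case and the {\L}ojasiewicz rigidity of Proposition \ref{NoChaos} in the ``never close'' case. We may assume $D(\Omega)>0$, the identical case $D(\Omega)=0$ being classical (see \cite{D-X}).

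First I would set $\gamma=\tfrac23$ and recall that with this choice the function $f$ of \eqref{NNN-1} is $f(\theta)=\tfrac23\sin\theta-\tfrac23\sin\tfrac\theta2$, which by Lemma \ref{L4.2} is positive on $(0,2\pi/3)$ and attains its maximum at $\theta_*=2\cos^{-1}\!\big(\tfrac{1+\sqrt{33}}{8}\big)$ (the $\sqrt{33}$ arising since $(1-\gamma)^2+8\gamma^2=\tfrac{33}{9}$). Using $\cos\tfrac{\theta_*}2=\tfrac{1+\sqrt{33}}8$ and $\sin\tfrac{\theta_*}2=\tfrac{\sqrt{30-2\sqrt{33}}}8$, a routine computation gives $f(\theta_*)=\tfrac{(\sqrt{33}-3)\sqrt{30-2\sqrt{33}}}{48}=\tfrac{4}{\sqrt{138+22\sqrt{33}}}$, so the hypothesis on $\kappa$ says precisely $D(\Omega)/\kappa<f(\theta_*)=\max_{(0,2\pi/3)}f$. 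By the discussion following Lemma \ref{L4.2}, the equation $f(\theta)=D(\Omega)/\kappa$ then has roots $0<\phi_1<\theta_*<\phi_2<2\pi/3$, with $f>D(\Omega)/\kappa$ on $(\phi_1,\phi_2)$.

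Next I would let $g_{\min}(t)$ be the smallest of the three circular gaps between $e^{\mathrm{i}\theta_1(t)},e^{\mathrm{i}\theta_2(t)},e^{\mathrm{i}\theta_3(t)}$ (equivalently the minimal pairwise circular distance; note $g_{\min}(t)\le 2\pi/3<\pi$), and argue by cases. \emph{Case 1: $g_{\min}(t_0)<\phi_2$ for some $t_0\ge0$.} Let $\mathcal{A}\subset\{1,2,3\}$ be the pair realizing $g_{\min}(t_0)$, so $|\mathcal A|/N=2/3=\gamma$ and, after a suitable $2\pi$-shift, $D(\Theta_{\mathcal A}(t_0))=g_{\min}(t_0)<\phi_2$. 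Picking any $\ell\in(\max\{\phi_1,g_{\min}(t_0)\},\phi_2)$ one checks $\tfrac12<\gamma\le1$, $\ell\in(0,2\cos^{-1}(\tfrac1\gamma-1))=(0,2\pi/3)$, $D(\Theta_{\mathcal A}(t_0))\le\ell$, and $f(\ell)>D(\Omega)/\kappa$, i.e. $\kappa>\kappa_*(\gamma,\ell,D(\Omega))$; since \eqref{Ku} is autonomous, Corollary \ref{C3.1} applied at time $t_0$ yields asymptotic complete phase-locking. \emph{Case 2: $g_{\min}(t)\ge\phi_2>0$ for all $t\ge0$.} Then no pair of oscillators collides for $t\ge0$, so the cyclic order on $\bbs^1$ is frozen and, lifting to $\bbr$, each difference $\theta_i(t)-\theta_j(t)$ stays in a fixed interval of length $2\pi$; together with the conservation law \eqref{conservation-zero} this gives $\sup_{t\ge0}\|\Theta(t)\|_\infty<\infty$, whence Proposition \ref{NoChaos} (equivalently the ``if'' part of Theorem \ref{T2.1}) gives convergence to a phase-locked state. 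Either way complete phase-locking holds.

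I do not expect a genuine obstacle here: the only quantitative input is the identity $f(\theta_*)=\tfrac{4}{\sqrt{138+22\sqrt{33}}}$, which is elementary trigonometry, and the only delicate points are the $\le$/$<$ and modulo-$2\pi$ bookkeeping in Case 1. The one conceptual observation that makes everything go through is that a collisionless Kuramoto flow on $\bbr^3$ is automatically uniformly bounded, hence phase-locked by Proposition \ref{NoChaos}; thus phase-locking can only fail if two oscillators get within circular distance $<\phi_2$, and there Corollary \ref{C3.1} finishes the job.
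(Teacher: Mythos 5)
Your proof is correct and follows essentially the same route as the paper: a dichotomy in which one branch applies Corollary \ref{C3.1} with $\gamma=\tfrac23$ to a two-oscillator majority ensemble (the paper triggers this at an exact collision, you at the slightly weaker event $g_{\min}<\phi_2$, which changes nothing), and the other branch observes that a collisionless flow is uniformly bounded and hence phase-locked by Theorem \ref{T2.1}/Proposition \ref{NoChaos}. Your identity $f(\theta_*)=\tfrac{4}{\sqrt{138+22\sqrt{33}}}$ at $\cos\tfrac{\theta_*}{2}=\tfrac{1+\sqrt{33}}{8}$ checks out and is exactly the paper's minimization of $\kappa_*\bigl(\tfrac23,\ell,D(\Omega)\bigr)$ over $\ell$.
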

\begin{proof}
Let $\Theta^0 = (\theta_1^0, \theta_0^2, \theta_0^3)$ be an arbitrary initial phase vector, and let $\Theta = \Theta(t)$ be the solution to \eqref{t-o} with initial phase vector $\Theta^0$. \newline

\noindent $\bullet$ Case A: Suppose that there is no collision in the time evolution of $\Theta$. In this case, it follows from Theorem \ref{T2.1} that the solution $\Theta$ approaches a phase-locked state asymptotically. \newline

\noindent $\bullet$ Case B: Suppose that there are collisions. In this case, we may assume that that $\theta_1$ and $\theta_2$ collide at $t = t_0 \geq 0$ by possibly rearranging the indices. We may as well perform a time-shift so that $t_0=0$ is a time of collision, and shift $\theta_2$ modulo $2\pi$ so that
\begin{equation} \label{C-1}
\theta^0_1=\theta^0_2.
\end{equation}
Then, it follows from Corollary \ref{C3.1} that if we set $\gamma_0 = \frac{2}{3}$ and choose $\ell$ and $\kappa$ such that
\[ \ell \in \left(0,\frac{2\pi}{3}\right), \qquad |\theta^0_1-\theta^0_2|< \ell, \qquad  \kappa> \frac{D(\Omega)}{\frac{2}{3}(\sin \ell -\sin\frac{\ell}{2})} =: \kappa\left(\frac{2}{3},\ell, D(\Omega)\right), \]
then there is complete phase-locking. By \eqref{C-1},  we have
\[ |\theta^0_1-\theta^0_2 |=0, \]
so 
\[
\kappa >\min_{\ell \in(0,2\pi/3)} \kappa\left(\frac{2}{3},\ell, D(\Omega)\right)
\]
is a sufficient condition for asymptotic phase-locking. By direct calculation,
\[ \min_{\ell \in(0,2\pi/3)} \kappa_*\left(\frac{2}{3},\ell, D(\Omega)\right) = \min_{l\in(0,2\pi/3)}\frac{D(\Omega)}{\frac{2}{3}(\sin l-\sin\frac{l}{2})}=\frac{\sqrt{138+22\sqrt{33}}}{4}D(\Omega), \]
where the above minimization occurs at $\ell_* \in (0,2\pi/3)$ with $\cos \frac{\ell_*}{2} = \frac{1 + \sqrt{33}}{8}$. 
\end{proof}


\section{Complete phase-locking} \label{sec:5}
\setcounter{equation}{0}
In this section, we complete the discussion of subsection \ref{sec:3.2} by providing a proof of Theorem \ref{T3.2}. Suppose that the coupling strength and the initial data satisfy
 \begin{equation} \label{D-1}
 R_0 > 0 \quad \mbox{and} \quad   \kappa >1.6\frac{D(\Omega)}{R_0^2}.    
\end{equation} 
We will show that asymptotic phase-locking occurs with the additional features mentioned in the statement of Theorem \ref{T3.2}.
 This improves the earlier results (Proposition \ref{P2.2} and Theorem \ref{T2.3}) in \cite{H-K-R} because the lower bound for $\kappa$ appearing in the second relation is independent of $N$. Since the proof is rather long, we split it into four steps: \newline
\begin{itemize}
\item
Step A (emergence of a well-prepared phase): There exists an instant $t_0 \in [0, \infty)$ such that 
\[ R(t_0)\geq R_0 \quad \mbox{and}  \quad \Delta(t_0) \leq \frac{1}{4} \Big( \frac{D(\Omega)}{\kappa R_0} \Big)^2, \]
where $\Delta$ is the functional in \eqref{New-1}.  This follows from a careful investigation of the growth behavior of $R$, namely Lemma \ref{L2.1}.

\vspace{0.2cm}

\item
Step B (emergence of majority ensembles): We show that the phase described in Step A has a majority ensemble concentrated on a short arc. Specifically, we give quantitative conditions for $\gamma$ and $\ell$ such that $\Theta(t_0)$ has a $\gamma$-ensemble of arclength $\le \ell$.
\vspace{0.2cm}

\item
Step C (minimizing the lower bound for the coupling strength): We provide a choice for $\gamma$ and $\beta$ that satisfies the conditions of Step B as well as the conditions of Corollary \ref{C3.1}, in such a way that the required coupling strength $\kappa$ is the one given in \eqref{D-1}.

\vspace{0.2cm}

\item
Step D (harvesting all estimates): We collect all estimates in Step A - Step C to finish the proof.
\end{itemize}

\subsection{Step A (From disordered phases to well-prepared phases)} The idea is as follows. We begin with Lemma \ref{L2.1}:
\[
\dot{R}\ge \kappa \sqrt{\Delta}\left(R\sqrt{\Delta}-\frac{D(\Omega)}{2\kappa}\right).
\]
We can multiply $2R$ on both sides to see
\[
\frac{dR^2}{dt} \ge 2\kappa \cdot R\sqrt{\Delta}\left(R\sqrt{\Delta}-\frac{D(\Omega)}{2\kappa}\right).
\]
Then, for any $\epsilon>0$ we cannot have 
\[ R\sqrt{\Delta}>(1+\varepsilon)\frac{D(\Omega)}{2\kappa} \quad \mbox{for all $t\ge 0$}, \]
because otherwise we will have
\[ \frac{dR^2}{dt} \ge 2\kappa \cdot R\sqrt{\Delta}\left(R\sqrt{\Delta}-\frac{D(\Omega)}{2\kappa}\right) \geq 2\kappa \varepsilon (1 + \varepsilon) \Big( \frac{D(\Omega)}{2\kappa}  \Big)^2 = \mbox{positive constant},   \]
and so  $R^2 \to \infty$ as $t\rightarrow\infty$, which contradicts the fact that $0\le R \leq 1$. Hence, there must be an instant when $R\sqrt{\Delta}\le (1+\varepsilon)\frac{D(\Omega)}{2\kappa}$, or equivalently $\Delta \leq \frac{(1+\epsilon)^2}{4} \Big( \frac{D(\Omega)}{\kappa R} \Big)^2$. We can work a bit harder on this line of reasoning, by considering the growth behavior of $R$ and by using a more refined exit-time argument, to obtain the following estimate:

\begin{lemma}\label{L5.1}
Let $\Theta = \Theta(t)$ be the solution to \eqref{Ku} with initial data $\Theta^0$ and $\kappa > 0$. Then, there exists a time $t_0 \in [0, \infty)$ such that
\begin{equation} \label{D-2}
 R(t_0)\geq R_0 > 0 \quad \mbox{and}  \quad \Delta(t_0) \leq \frac{1}{4} \Big( \frac{D(\Omega)}{\kappa R_0} \Big)^2. 
\end{equation}
\end{lemma}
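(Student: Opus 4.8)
The plan is to push the elementary growth estimate of Lemma~\ref{L2.1} through an exit-time argument, being careful about the degenerate point where that estimate becomes vacuous. Write $b := \frac{D(\Omega)}{2\kappa}$, so that the desired conclusion reads: there is $t_0\ge 0$ with $R(t_0)\ge R_0$ and $\Delta(t_0)\le (b/R_0)^2 = \tfrac14\big(D(\Omega)/(\kappa R_0)\big)^2$. Recall that Lemma~\ref{L2.1} is available on any interval where $R>0$; since $R(0)=R_0>0$ this holds near $t=0$, and the argument will keep $R$ bounded below by $R_0$ on the range we use. If $\Delta(0)\le (b/R_0)^2$, take $t_0=0$; otherwise assume $\Delta(0)>(b/R_0)^2$ and suppose, toward a contradiction, that $\Delta(t)>(b/R_0)^2$ at \emph{every} time $t\ge 0$ for which $R(t)\ge R_0$.

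First I would upgrade this supposition to ``$R(t)\ge R_0$ for all $t\ge 0$''. If not, pick a time $s$ with $R(s)<R_0$ and let $t_2\in[0,s)$ be the last time before $s$ at which $R=R_0$ (it exists since $R(0)=R_0$), so that $R<R_0$ on $(t_2,s]$. Since $R(t_2)=R_0\ge R_0$, the supposition gives $\sqrt{\Delta(t_2)}>b/R_0$, hence $R(t_2)\sqrt{\Delta(t_2)}=R_0\sqrt{\Delta(t_2)}>b$, and Lemma~\ref{L2.1} yields $\dot R(t_2)>0$; but then $R>R_0$ just to the right of $t_2$, contradicting $R<R_0$ on $(t_2,s]$. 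Thus $R(t)\ge R_0$ throughout $[0,\infty)$, so in particular $R$ never vanishes and Lemma~\ref{L2.1} is valid for all time, and combining with the supposition, $\Delta(t)>(b/R_0)^2$ for all $t\ge 0$ as well.

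Next I would derive a blow-up contradiction. From $R\ge R_0$, $\sqrt{\Delta}>b/R_0$ and Lemma~\ref{L2.1} we get $\dot R>0$ for all $t$, so $R$ is strictly increasing and, being bounded by $1$, converges to some $R_\infty\in(R_0,1]$. Choosing $t_\ast$ so that $R(t)\ge R_1:=\tfrac12(R_0+R_\infty)>R_0$ for $t\ge t_\ast$, Lemma~\ref{L2.1} then gives, for $t\ge t_\ast$,
\[
\dot R \ge \kappa\sqrt{\Delta}\,\big(R\sqrt{\Delta}-b\big) \ge \kappa\,\frac{b}{R_0}\Big(R_1\frac{b}{R_0}-b\Big) = \kappa\,\frac{b^2}{R_0^2}\,(R_1-R_0) > 0,
\]
a fixed positive constant, so $R(t)\to\infty$, contradicting $R\le 1$. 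Hence the supposition fails: there is $t_0\ge 0$ with $R(t_0)\ge R_0$ and $\Delta(t_0)\le (b/R_0)^2 = \tfrac14\big(D(\Omega)/(\kappa R_0)\big)^2$, which is the assertion of the lemma.

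The main obstacle is the degeneracy at the ``corner'' $R=R_0$, $\sqrt{\Delta}=b/R_0$: there the right-hand side of the inequality in Lemma~\ref{L2.1} vanishes, so a single application cannot by itself prevent $R$ and $\Delta$ from lingering near that state forever, and a naive one-step estimate fails. The two-tier structure above is exactly what sidesteps this — one first uses only the \emph{sign} of $\dot R$ at $R=R_0$ to trap $R$ from below, and only then, once $R$ has necessarily crept strictly above $R_0$, does the estimate furnish a uniform positive lower bound for $\dot R$, which together with $R\le 1$ closes the argument.
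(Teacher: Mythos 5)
Your proof is correct, and it takes a route that differs from the paper's in how the hard case is closed. The paper runs an exit-time argument on the \emph{moving} threshold $\Delta(t)>\tfrac14\big(D(\Omega)/(\kappa R(t))\big)^2$: if the exit time $T^*$ is finite, the conclusion is read off at $T^*$ using $R(T^*)>R_0$; but if $T^*=\infty$, the lower bound from Lemma \ref{L2.1} degenerates (since $\sqrt{\Delta}$ and $D(\Omega)/(2\kappa R)$ may approach a common limit), and the paper must integrate $\dot R>\tfrac{D(\Omega)}{2}\big[\sqrt{\Delta}-\tfrac{D(\Omega)}{2\kappa R}\big]$ over $[0,\infty)$, invoke $R\le 1$, and extract a good time by an $\varepsilon$-argument. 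You instead keep the \emph{fixed} threshold $(b/R_0)^2$ and pay for it with the extra bootstrap showing $R\ge R_0$ for all time (your last-crossing-time step, which is sound since $R$ is smooth wherever $R>0$); once that is in place, the strict monotonicity of $R$ manufactures a margin $R_1-R_0>0$ that makes $\dot R$ uniformly positive for large times, and the contradiction with $R\le 1$ is immediate, with no integration or $\varepsilon$-extraction needed. This is essentially a rigorous implementation of the heuristic the paper sketches just before Lemma \ref{L5.1} (``otherwise $R^2\to\infty$''), with the needed $\varepsilon$-margin generated by monotonicity rather than postulated. The only caveat, shared equally by the paper's own Case B.2, is that the final uniform lower bound $\kappa b^2(R_1-R_0)/R_0^2$ is vacuous when $D(\Omega)=0$; both arguments tacitly assume $D(\Omega)>0$, which is the only regime in which the lemma is of interest.
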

\begin{proof} For a given initial data $\Theta^0$, we have the following two cases:
\[  \mbox{Either} \quad \Delta(0) \leq  \frac{1}{4} \Big( \frac{D(\Omega)}{\kappa R_0} \Big)^2 \quad \mbox{or} \quad  \Delta(0) >  \frac{1}{4} \Big( \frac{D(\Omega)}{\kappa R_0} \Big)^2.   \]

\vspace{0.2cm}

\noindent $\bullet$ Case A: Suppose that 
\[  \Delta(0) \leq  \frac{1}{4} \Big( \frac{D(\Omega)}{\kappa R_0} \Big)^2. \]
In this case, we set $t_0  = 0$ to derive the desired estimate \eqref{D-2}.  \newline

\vspace{0.2cm}

\noindent $\bullet$ Case B: Suppose that 
\begin{equation} \label{D-3}
 \Delta(0) >  \frac{1}{4} \Big( \frac{D(\Omega)}{\kappa R_0} \Big)^2. 
\end{equation}
In this case, we define a set ${\mathcal T}$:
\[
\mathcal{T}=\left\{T>0: R(t)>0 \mbox{ and }\Delta(t) >  \frac{1}{4} \Big( \frac{D(\Omega)}{\kappa R(t)} \Big)^2~\textrm{for}~t\in[0,T)\right\}.
\]
The condition ``$R(t)>0$" is meant to guarantee the well-definedness and smoothness of $\Delta$.
By \eqref{D-3} and the continuity of $\Delta(\cdot)$, there exists some $\delta > 0$ such that $\delta \in {\mathcal T}$, which yields the nonemptiness of 
${\mathcal T}$. We now set 
\[
T^*=\sup \mathcal{T}>0.
\]
Then by definition of the set ${\mathcal T}$, it is clear that
\[  \mathcal{T}=(0,T^*). \]
On this interval, we may use Lemma \ref{L2.1} to obtain
\begin{align}\label{D-3-1}
\begin{aligned}
\dot{R}(t) &\geq \kappa \sqrt{\Delta(t)}\left[R \sqrt{\Delta(t)} -\frac{D(\Omega)}{2\kappa}\right]  > \kappa \sqrt{\Delta(t)}\left[R\left(\frac{D(\Omega)}{2\kappa R}\right)-\frac{D(\Omega)}{2\kappa}\right] =0, \quad t \in (0, T^*),
\end{aligned}
\end{align}
i.e., $R$ is strictly increasing on $(0,T^*)$. Now we have two cases, namely
\[
T^*<\infty\quad \mbox{or}\quad T^*=\infty.
\]

\vspace{0.2cm}

\noindent $\diamond$ Case B.1: Suppose in addition to \eqref{D-3} that $T^*< \infty.$ Since $R(t)$ is strictly increasing on $(0,T^*)$, we have
\begin{equation}\label{D-4-0}
R(T^*)>R_0>0.
\end{equation}
Now by definition of $\mathcal{T}$ and the finiteness of $T^*$, we have
\[\Delta(T^*) = \frac{D(\Omega)^2}{4\kappa^2 R(T^*)^2} \stackrel{\eqref{D-4-0}}{<} \frac{D(\Omega)^2}{4\kappa^2 R_0^2}.   \]
By setting $t_0 = T^*$, we obtain the desired estimate.  

\vspace{0.2cm}

\noindent $\diamond$ Case B.2: We are left with the most complicated case: suppose that we have \eqref{D-3} along with $T^*=\infty.$ Let us revisit inequality \eqref{D-3-1}, but with a slight twist:
\begin{align}\label{D-4-1}
\begin{aligned}
\dot{R}(t) &\geq \kappa \sqrt{\Delta(t)}\left[R \sqrt{\Delta(t)} -\frac{D(\Omega)}{2\kappa}\right] > \kappa \left(\frac{D(\Omega)}{2\kappa R}\right)\left[R\sqrt{\Delta(t)}-\frac{D(\Omega)}{2\kappa}\right]\\
&> \kappa \left(\frac{D(\Omega)}{2\kappa R}\right)\left[R\left(\frac{D(\Omega)}{2\kappa R}\right)-\frac{D(\Omega)}{2\kappa }\right] =0, \quad t \in (0, \infty).
\end{aligned}
\end{align}
The `twist' lies in changing the order in which the inequality $\sqrt{\Delta(t)}>\frac{D(\Omega)}{2KR}$ is applied(the second inequality in \eqref{D-4-1} is true because we already know that $\left[R \sqrt{\Delta(t)} -\frac{D(\Omega)}{2\kappa}\right]>0$). We rewrite the first, third and final terms of \eqref{D-4-1}:
\[
\dot{R} >\frac{D(\Omega)}{2}\left[\sqrt{\Delta(t)}-\frac{D(\Omega)}{2\kappa R}\right]
>0, \quad t \in (0, \infty).
\]
We integrate the above relation from $0$ to $\infty$, and use the boundedness of $R\in[0,1]$ to obtain
\[
1\ge \int_0^\infty \dot{R}(t)dt>\frac{D(\Omega)}{2}\int_0^\infty\left[\sqrt{\Delta(t)}-\frac{D(\Omega)}{2 \kappa R(t)}\right]dt.
\]
Hence, for any $\varepsilon>0$, we may find some time $\tau_\varepsilon>1$ such that
\[
\sqrt{\Delta(\tau_\varepsilon)}-\frac{D(\Omega)}{2\kappa R(\tau_\varepsilon)}<\varepsilon.
\]
Thus, if we choose $\varepsilon>0$ small enough so that
\[
\frac{D(\Omega)}{2\kappa R(1)}+\varepsilon<\frac{D(\Omega)}{2\kappa R_0},
\]
(this is possible because $R(1)>R(0)$ from \eqref{D-4-0}), then we get
\[
\sqrt{\Delta(\tau_\varepsilon)}<\frac{D(\Omega)}{2\kappa R(\tau_\varepsilon)}+\varepsilon\stackrel{\eqref{D-4-0}}{<}\frac{D(\Omega)}{2\kappa R(1)}+\varepsilon<\frac{D(\Omega)}{2\kappa R_0},
\]
or equivalently
\[
\Delta(\tau_\varepsilon)<\frac{D(\Omega)^2}{4\kappa^2 R_0^2}.
\] 
Hence
\[ R(\tau_\varepsilon) > R_0 \quad \mbox{and} \quad  \Delta(\tau_\varepsilon) < \frac{D(\Omega)^2}{4\kappa^2 R_0^2}.    \]
By setting $t_0 = \tau_\varepsilon$, we obtain the desired estimate.  
\end{proof}

\subsection{Step B (emergence of majority ensembles):} Next, we show that the condition \eqref{D-2} forces the phase vector $\Theta(t_0)$ prepared in Lemma \ref{L5.1} to contain some majority ensemble. The following is a heuristic argument. Each part of \eqref{D-2} means
\begin{align}
\begin{aligned} \label{D-5-0}
& \frac{1}{N}\sum_{j=1}^N \cos (\theta_j(t_0)-\phi(t_0))=R(t_0)\ge R_0, \\
& \frac{1}{N}\sum_{j=1}^N\sin^2(\theta_j(t_0)-\phi(t_0)) = \Delta(t_0) \le \frac{1}{4} \Big( \frac{D(\Omega)}{\kappa R_0} \Big)^2.
\end{aligned}
\end{align}
We can consider two cases depending on the size of $R_0$: \newline

\noindent $\diamond$ Case 1. If $R_0$ is sufficiently close to $1$, then \eqref{D-5-0}-(i) alone implies that many $(\theta_j-\phi)$'s lie close to $0$ mod $2\pi$, say in some fixed interval $(-\beta,\beta)$, and would form a majority ensemble concentrated on a small arc. The quantitative condition for these to form a $\gamma$-ensemble will be given in \eqref{D-5}-(i) below. \newline

\noindent $\diamond$ Case 2. On the other hand, if $R_0$ is significantly less than $1$ but $\kappa$ is sufficiently large so as to make the RHS of \eqref{D-5-0}-(ii) small, then on average $|\sin(\theta_j-\phi)|$ must be small, so for most $j$, $(\theta_j-\phi)$ must belong to either $(-\beta,\beta)$ or $(\pi-\beta,\pi+\beta)$ modulo $2\pi$. However, it follows from \eqref{D-5-0}-(i) that we should have more $(\theta_j-\phi)$'s in $(-\beta,\beta)$ rather than in $(\pi-\beta,\pi+\beta)$, so we can guess that the $(\theta_j-\phi)$'s lying in $(-\beta,\beta)$ should form a majority ensemble. The quantitative condition for this to happen will be given in \eqref{D-5}-(ii) below.

\begin{lemma}\label{L5.2}
Suppose that the following conditions hold.
\begin{enumerate}
\item
The phase vector $\Theta(t_0)$ satisfies \eqref{D-2}:
\begin{equation}\tag{\ref{D-2}}
R(t_0)\geq R_0 \quad \mbox{and}  \quad \Delta(t_0) \leq \frac{1}{4} \Big( \frac{D(\Omega)}{\kappa R_0} \Big)^2.
\end{equation}
\item
Parameters $\gamma\in(\frac{1}{2},1]$ and $\beta\in(0,\frac{\pi}{2})$ satisfy
\begin{equation}\label{D-5}
\mbox{either}\quad R_0\ge \gamma+(1-\gamma)\cos\beta\quad\mbox{or}\quad 2\gamma+\frac{D(\Omega)^2}{4\kappa^2R_0^2}\cdot \frac{1}{1-\cos\beta}\le 1+R_0.
\end{equation}
\end{enumerate}
Then, $\Theta(t_0)$ has a $\gamma$-ensemble of arclength $\le2\beta$.
\end{lemma}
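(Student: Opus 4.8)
The plan is to exhibit an explicit candidate $\gamma$-ensemble and then check its size under each of the two branches of \eqref{D-5}. Since $R(t_0)\ge R_0>0$, the phase $\phi=\phi(t_0)$ is well defined; for each $j$ let $\psi_j\in[-\pi,\pi)$ be the representative of $\theta_j(t_0)-\phi$ modulo $2\pi$, and put
\[
\mathcal{A}:=\{\,j\in\mathcal{N}:|\psi_j|\le\beta\,\}.
\]
Every $\theta_j(t_0)$ with $j\in\mathcal{A}$ lies within $\beta$ of $\phi$ modulo $2\pi$, so after suitable $2\pi$-shifts one has $D(\Theta_\mathcal{A}(t_0))\le 2\beta$; in the sense of Definition \ref{D3.1} it therefore only remains to prove $|\mathcal{A}|/N\ge\gamma$. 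Two elementary observations will be used repeatedly: since $x\mapsto\cos x$ is strictly decreasing on $[0,\pi]$, we have $\cos\psi_j\le\cos\beta$ and $1-\cos\psi_j\ge 1-\cos\beta>0$ whenever $j\notin\mathcal{A}$; and $\beta\in(0,\tfrac{\pi}{2})$ gives $0<\cos\beta<1$.

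Consider first the branch $R_0\ge\gamma+(1-\gamma)\cos\beta$. Using the identity $R(t_0)=\tfrac1N\sum_j\cos\psi_j$ from \eqref{order-rel-2} and bounding $\cos\psi_j\le1$ on $\mathcal{A}$ and $\cos\psi_j\le\cos\beta$ off $\mathcal{A}$,
\[
R_0\le R(t_0)\le \frac{|\mathcal{A}|}{N}+\Big(1-\frac{|\mathcal{A}|}{N}\Big)\cos\beta=\cos\beta+\frac{|\mathcal{A}|}{N}\,(1-\cos\beta).
\]
Dividing by $1-\cos\beta>0$ and comparing with the hypothesis immediately gives $|\mathcal{A}|/N\ge\gamma$.

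For the second branch I would instead work with $1+\cos\psi_j=2\cos^2(\psi_j/2)\ge 0$, which is precisely what neutralizes a possible antipodal cluster near $\phi+\pi$ (the scenario flagged in Case 2 of the preceding heuristic). The two facts needed are $1+\cos\psi_j\le2$ for all $j$, and, for $j\notin\mathcal{A}$,
\[
1+\cos\psi_j=\frac{\sin^2\psi_j}{1-\cos\psi_j}\le\frac{\sin^2\psi_j}{1-\cos\beta}.
\]
Summing over $j$, using $\tfrac1N\sum_j\sin^2\psi_j=\Delta(t_0)$ from \eqref{New-1} together with $R(t_0)\ge R_0$ and \eqref{D-2},
\[
1+R_0\le\frac1N\sum_{j}(1+\cos\psi_j)\le 2\,\frac{|\mathcal{A}|}{N}+\frac{\Delta(t_0)}{1-\cos\beta}\le 2\,\frac{|\mathcal{A}|}{N}+\frac{1}{1-\cos\beta}\cdot\frac{D(\Omega)^2}{4\kappa^2R_0^2}.
\]
Rearranging against the hypothesis $2\gamma+\frac{D(\Omega)^2}{4\kappa^2R_0^2(1-\cos\beta)}\le 1+R_0$ yields $|\mathcal{A}|/N\ge\gamma$, which finishes the proof.

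I do not expect a genuine obstacle: the entire content is the choice of $\mathcal{A}$ as the arc of half-width $\beta$ about the centroid direction $\phi$, together with the passage from $\cos\psi_j$ to $1+\cos\psi_j$ in the second case. The only care required is the bookkeeping of strict versus non-strict inequalities in partitioning $\mathcal{N}$ at $|\psi_j|=\beta$, and observing that the trigonometric identity $\sin^2\beta/(1-\cos\beta)=1+\cos\beta$ is exactly what makes the two versions of \eqref{D-5} match these two estimates.
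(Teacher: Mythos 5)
Your proposal is correct and follows essentially the same route as the paper: the same arc $\mathcal{A}=\{j:|\theta_j(t_0)-\phi(t_0)|\le\beta \bmod 2\pi\}$, the same centroid estimate for the first branch, and for the second branch the same key inequality $\sin^2\psi_j\ge(1-\cos\beta)(1+\cos\psi_j)$ on $\mathcal{N}\setminus\mathcal{A}$ (which you obtain from the factorization $\sin^2 x=(1-\cos x)(1+\cos x)$ where the paper invokes concavity of $x\mapsto 1-x^2$ and Jensen). The subsequent bookkeeping with $\Delta(t_0)$ and the hypothesis $\eqref{D-5}_2$ is a harmless rearrangement of the paper's chain of inequalities.
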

\begin{proof}
We decompose the index set ${\mathcal N}=\{1,\cdots,N\}$ into two mutually disjoint subsets ${\mathcal A}$ and ${\mathcal B}$:
\begin{align}
\begin{aligned} \label{D-6}
\mathcal{A} &:=\left\{i=1,\cdots,N:\theta_i(t_0)\in[\phi(t_0)-\beta,\phi(t_0)+\beta]\mod 2\pi\right\}, \\
\mathcal{B} &:={\mathcal N}\backslash {\mathcal A}=\left\{i=1,\cdots,N:\theta_i(t_0)\in(\phi(t_0)+\beta,\phi(t_0)+2\pi-\beta)\mod 2\pi\right\}.
\end{aligned}
\end{align}
We claim that $\mathcal{A}$ is the desired $\gamma$-ensemble. Since \eqref{D-5} consists of two alternatives, let us work on each case separately.

\vspace{0.2cm}

\noindent $\bullet$ Case A: Suppose that $\eqref{D-5}_1$ holds, namely
\[
\gamma+(1-\gamma)\cos\beta \leq R_0.
\]
Then we have
\begin{align*}
R_0&\stackrel{\eqref{D-2}_1}{\le} R(t_0) \stackrel{\eqref{order-rel-2}}{=} \frac{1}{N} \sum_{j\in\mathcal{N}} \cos(\theta_j-\phi) = \frac{1}{N} \sum_{j\in\mathcal{A}} \cos(\theta_j-\phi)+\frac{1}{N} \sum_{j\in\mathcal{B}} \cos(\theta_j-\phi)\\
&\stackrel{\eqref{D-6}}{\le}\frac{1}{N} \sum_{j\in\mathcal{A}} 1+\frac{1}{N} \sum_{j\in\mathcal{B}} \cos\beta=\frac{|\mathcal{A}|}{N}+\frac{|\mathcal{B}|}{N}\cos\beta=\frac{|\mathcal{A}|}{N}+\left(1-\frac{|\mathcal{A}|}{N}\right)\cos\beta.
\end{align*}
We combine this with $\eqref{D-5}_1$ to get
\[
\gamma+(1-\gamma)\cos\beta\le R_0\le \frac{|\mathcal{A}|}{N}+\left(1-\frac{|\mathcal{A}|}{N}\right)\cos\beta,
\]
which gives
\[
\gamma(1-\cos\beta)\le \frac{|\mathcal{A}|}{N}(1-\cos\beta),
\]
and using $\cos\beta<1$, we conclude
\[
\gamma\le \frac{|\mathcal{A}|}{N}.
\]
Hence $\mathcal{A}$ is a $\gamma$-ensemble of arclength $\le2\beta$. \newline

\noindent $\bullet$ Case B: Now suppose that $\eqref{D-5}_2$ holds, namely
\[
 2\gamma+\frac{D(\Omega)^2}{4\kappa^2R_0^2}\cdot\frac{1}{(1-\cos\beta)}\le 1+R_0.
\]
Note that the map $\bbr\rightarrow\bbr$ given by $x\mapsto 1-x^2$ is concave, so by Jensen's inequality,
\[
1-x^2\ge 0\cdot \frac{\cos\beta-x}{1+\cos\beta}+\sin^2\beta\cdot \frac{1+x}{1+\cos\beta}=(1-\cos\beta)(1+x),\quad x\in[-1,\cos\beta].
\]
Substituting $x=\cos(\theta_i-\phi)\in[-1,\cos\beta]$ for each $i\in\mathcal{B}$ gives
\begin{equation}\label{D-7}
\sin^2(\theta_i-\phi) \ge (1-\cos\beta)(1+\cos(\theta_i-\phi)),\quad i\in\mathcal{B}.
\end{equation}
Therefore, we have
\begin{align*}
\begin{aligned}
\Delta(t_0)&\stackrel{\eqref{New-1}}{=}\frac{1}{N}\sum_{i\in\mathcal{N}} \sin^2 (\theta_i-\phi)\ge \frac{1}{N}\sum_{i\in\mathcal{B}} \sin^2 (\theta_i-\phi)\stackrel{\eqref{D-7}}{\ge} \frac{1}{N}\sum_{i\in\mathcal{B}} (1-\cos\beta)(1+\cos(\theta_i-\phi))\\
&=(1-\cos\beta)\frac{1}{N}\sum_{i\in\mathcal{B}} \left(1+\cos(\theta_i-\phi)\right)=(1-\cos\beta)\left[\frac{|\mathcal{B}|}{N}+\frac{1}{N}\sum_{i\in\mathcal{B}} \cos(\theta_i-\phi)\right]\\
&\stackrel{\eqref{order-rel-1}_1}{=}(1-\cos\beta)\left[\frac{N-|\mathcal{A}|}{N}+R_0-\frac{1}{N}\sum_{i\in\mathcal{A}} \cos(\theta_i-\phi)\right]\\
&\ge (1-\cos\beta)\left[\frac{N-|\mathcal{A}|}{N}+R_0-\frac{1}{N}\sum_{i\in\mathcal{A}} 1\right]= (1-\cos\beta)\left[\frac{N-|\mathcal{A}|}{N}+R_0-\frac{|\mathcal{A}|}{N}\right]\\
&= (1-\cos\beta)\left[1+R_0-\frac{2|\mathcal{A}|}{N}\right].
\end{aligned}
\end{align*}
Now
\begin{align*}
1+R_0&\stackrel{\eqref{D-5}_2}{\ge} 2\gamma+\frac{D(\Omega)^2}{4K^2R_0^2}\cdot\frac{1}{1-\cos\beta}\stackrel{\eqref{D-2}_2}{\ge} 2\gamma+\Delta(t_0)\cdot\frac{1}{1-\cos\beta}\ge 2\gamma+\left[1+R_0-\frac{2|\mathcal{A}|}{N}\right],
\end{align*}
and cancellation gives
\[
\frac{|\mathcal{A}|}{N}\ge \gamma.
\]
Thus, $\mathcal{A}$ is a $\gamma$-ensemble of arclength $\le 2\beta$. This concludes the proof.
\end{proof}

\begin{remark}\label{limitations}
1. A natural limitation of $\eqref{D-5}_1$ is that it requires
\[
R_0\ge \gamma+(1-\gamma)\cos\beta> \frac{1}{2}+0=\frac{1}{2}.
\]
Hence, for $R_0\le \frac{1}{2}$, we will have to use $\eqref{D-5}_2$. In fact, in Step C below, we will use $\eqref{D-5}_1$ for $R> 0.94$ and $\eqref{D-5}_2$ for $R\le 0.94$. \newline

\noindent 2. When $R_0\le \frac{1}{2}$, we can see from $\eqref{D-5}_2$ that we also have the natural limitation  $2\gamma\le 1+R_0$, that is $\gamma\le \frac{1+R_0}{2}$.
\end{remark}

\subsection{Step C (Minimizing the lower bound for the coupling strength)} Now, we want to apply the end result of Step B to Corollary \ref{C3.1}. To this end, we ask: 
\begin{quote}
Given $R_0>0$, can we choose $\gamma$, $\beta$ and $\kappa$ which, with $\ell=2\beta$, satisfy \eqref{D-5} as well as \eqref{B-1}? If so, how should we choose so that $\kappa$ is the smallest possible? In particular, why does the minimum possible $\kappa$ satisfies $\kappa >C\frac{D(\Omega)}{R_0^2}$ of Theorem \ref{T3.2}? Can we also satisfy \eqref{additional} as well?
\end{quote}

\noindent We can easily answer these questions for small $R_0$. We see from $\eqref{D-5}_2$ (recall from Remark \ref{limitations} that we must use $\eqref{D-5}_2$ instead of $\eqref{D-5}_1$ for small $R_0$) and the constraint $\gamma>\frac{1}{2}$ from $\eqref{B-1}_1$ that we need
\begin{equation}\label{D-10}
\gamma=\frac{1}{2}+{\mathcal O}(R_0),\qquad \frac{D(\Omega)^2}{4 \kappa^2R_0^2}\frac{1}{1-\cos\beta}={\mathcal O}(R_0).
\end{equation}
But then we would need from $\eqref{B-1}_2$ and $\ell=2\beta$ that
\[
\cos\beta>\frac{1-\gamma}{\gamma}=1- {\mathcal O}(R_0), \quad \sin\beta= {\mathcal O}(\sqrt{R_0}),
\]
so $\eqref{D-10}_2$ would require
\[
\frac{D(\Omega)^2}{4\kappa^2R_0^2}= {\mathcal O}(R_0^2), \quad \mbox{or equivalently} \quad \kappa >C\frac{D(\Omega)}{R_0^2}.
\]
Now $\eqref{B-1}_3$ requires
\begin{align*}
\kappa &>\frac{D(\Omega)}{2\sin\beta(\gamma\cos\beta-(1-\gamma))}=\frac{D(\Omega)}{2\sin\beta(\gamma(1+\cos\beta)-1)}\\
&\ge C\frac{D(\Omega)}{\sqrt{R_0}((0.5+O(R_0))(2-O(R_0))-1)}\ge C\frac{D(\Omega)}{R_0^{3/2}}.
\end{align*}
Hence, we see that we need at least $\kappa >C\frac{D(\Omega)}{R_0^2}$ in order to satisfy \eqref{B-1} and \eqref{D-5}. Furthermore, \eqref{additional} requires
\begin{align*}
\frac{D(\Omega)}{\kappa}< \frac{(2\gamma-1)^{3/2}}{\sqrt{2\gamma}}\frac{2-\gamma}{\sqrt{\gamma/2}+(1-\gamma)}=\frac{O(R_0)^{3/2}}{O(1)}\frac{O(1)}{O(1)}\le CR_0^{3/2},
\end{align*}
so, heuristically, \eqref{additional} is a weaker condition than \eqref{B-1} and \eqref{D-5} combined.
Conversely, it is not difficult to retrace the above argument to see that with suitable coefficients we can satisfy not only \eqref{B-1} and \eqref{D-5} but also \eqref{additional} with only $\kappa >C\frac{D(\Omega)}{R_0^2}$. \newline

The following lemma gives a reasonably optimal choice for $\gamma$, $\beta$ and $\kappa$.
\begin{lemma}\label{L5.3}
Suppose that  the initial data and coupling strength satisfy 
\[R_0 > 0\quad \mbox{and} \quad \kappa >1.6\frac{D(\Omega)}{R_0^2}. \]
Choose
\[
\gamma(R_0)=
\begin{cases}
0.5+\frac{0.35}{0.94}R_0& \mbox{if }0<R_0\le 0.94,\\
1-2.5(1-R_0)&\mbox{if } 0.94<R_0\le 1,
\end{cases}
\]
and
\[
\beta(R_0)=
\begin{cases}
\cos^{-1}(1-\frac{0.4}{0.94}R_0)&\mbox{if }0<R_0\le 0.94,\\
\cos^{-1}0.6&\mbox{if }0.94<R_0\le 1.
\end{cases}
\]
Then this choice of $\gamma$ and $\beta$ satisfies the assumption \eqref{D-5} of Lemma \ref{L5.2}, and the choice of $\gamma$ and $\ell=2\beta$ satisfies the conditions \eqref{B-1} and \eqref{additional} of Corollary \ref{C3.1}.
\end{lemma}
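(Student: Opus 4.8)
The plan is to substitute the explicit formulas for $\gamma(R_0)$ and $\beta(R_0)$ into conditions \eqref{D-5}, \eqref{B-1} and \eqref{additional}, to invoke the hypothesis $\kappa>1.6D(\Omega)/R_0^2$ only through its consequences $D(\Omega)/\kappa<R_0^2/1.6$ and $D(\Omega)^2/\kappa^2<R_0^4/2.56$, and thereby to reduce each condition to an inequality in the single variable $R_0\in(0,1]$, which one then verifies on the two subintervals $(0,0.94]$ and $(0.94,1]$ (at the breakpoint $R_0=0.94$ the two branches of $\gamma$ and $\beta$ match, so $\gamma$ and $\cos\beta$ are continuous). Two preliminary reductions help. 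First, with $\ell=2\beta$ one has $\kappa_*(\gamma,2\beta,D(\Omega))=D(\Omega)/f(2\beta)$, where $f$ is the function in \eqref{NNN-1} and $f(2\beta)=2\sin\beta\bigl(\gamma(1+\cos\beta)-1\bigr)$; thus $\eqref{B-1}_2$ is equivalent to $\gamma(1+\cos\beta)>1$ and $\eqref{B-1}_3$ to $f(2\beta)>D(\Omega)/\kappa$. Second, by Lemma \ref{f-estimates}(ii) the right-hand side of \eqref{additional} equals $f\bigl(\cos^{-1}\tfrac{1-\gamma}{\gamma}\bigr)$, so \eqref{additional} reads $f\bigl(\cos^{-1}\tfrac{1-\gamma}{\gamma}\bigr)>D(\Omega)/\kappa$.

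In the upper regime $0.94<R_0\le1$ we have $\cos\beta=0.6$, $\sin\beta=0.8$ and $\gamma=2.5R_0-1.5\in(0.85,1]$. Here $\eqref{B-1}_1$ is clear; $\eqref{D-5}_1$ holds with equality because $\gamma+(1-\gamma)(0.6)=0.6+0.4\gamma=R_0$; and $\gamma(1+\cos\beta)=1.6\gamma>1$ since $\gamma>0.625$. For $\eqref{B-1}_3$ one gets $f(2\beta)=1.6(1.6\gamma-1)$, so it suffices that $1.6D(\Omega)/R_0^2\ge D(\Omega)/f(2\beta)$, i.e. $2.56(1.6\gamma-1)\ge R_0^2$; substituting $\gamma=2.5R_0-1.5$ this becomes $R_0^2-10.24R_0+8.704\le0$, whose roots lie below $0.94$ and above $1$, so it holds on $(0.94,1]$. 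Finally, since $D(\Omega)/\kappa<R_0^2/1.6\le1/1.6=0.625$, condition \eqref{additional} follows once $f\bigl(\cos^{-1}\tfrac{1-\gamma}{\gamma}\bigr)>0.625$ for $\gamma\in[0.85,1]$; this quantity is increasing in $\gamma$ there with value $\approx0.644$ at $\gamma=0.85$.

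In the lower regime $0<R_0\le0.94$ set $c_1=\tfrac{0.4}{0.94}$, $c_2=\tfrac{0.35}{0.94}$, so $\gamma=0.5+c_2R_0$, $\cos\beta=1-c_1R_0$, and one has the identities $1+R_0-2\gamma=(1-2c_2)R_0$, $1-\cos\beta=c_1R_0$, and $\gamma(1+\cos\beta)-1=R_0\bigl((2c_2-\tfrac12c_1)-c_1c_2R_0\bigr)$. Using the first two, $\eqref{D-5}_2$ reduces to $D(\Omega)^2/\kappa^2\le4c_1(1-2c_2)R_0^4=\tfrac{0.384}{0.8836}R_0^4$, which follows from $D(\Omega)^2/\kappa^2<R_0^4/2.56$ because $\tfrac{1}{2.56}<\tfrac{0.384}{0.8836}$. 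Condition $\eqref{B-1}_1$ ($\gamma\in(0.5,0.85]$) is clear; $\eqref{B-1}_2$ holds since the factor $(2c_2-\tfrac12c_1)-c_1c_2R_0$ is positive for $R_0<\tfrac{2c_2-c_1/2}{c_1c_2}$, a bound exceeding $0.94$; and \eqref{additional}, which by Lemma \ref{f-estimates}(ii) demands $R_0^2/1.6\le\tfrac{(2\gamma-1)^{3/2}(2-\gamma)}{\sqrt{2\gamma}\,(\sqrt{\gamma/2}+(1-\gamma))}$ with $2\gamma-1=\tfrac{0.7}{0.94}R_0$, reduces after dividing by $R_0^{3/2}$ to a one-variable inequality with comfortable slack (the ratio of the two sides stays bounded away from $1$, being smallest near $R_0=0.94$).

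The real work is $\eqref{B-1}_3$ in the lower regime. Here $\sin^2\beta=c_1R_0(2-c_1R_0)$, so $f(2\beta)=2\sqrt{c_1R_0(2-c_1R_0)}\,R_0\bigl((2c_2-\tfrac12c_1)-c_1c_2R_0\bigr)$, and $1.6f(2\beta)\ge R_0^2$ becomes, after dividing by $R_0^{3/2}$ and squaring, the quartic polynomial inequality
\[
P(R_0):=10.24\,c_1(2-c_1R_0)\Bigl((2c_2-\tfrac12c_1)-c_1c_2R_0\Bigr)^2-R_0\ \ge\ 0,\qquad R_0\in(0,0.94].
\]
I would verify $P(0)>0$ and $P(0.94)>0$ — the latter only barely, $P(0.94)\approx0.08$ — and then show $P$ is decreasing on $[0,0.94]$ by bounding the cubic $P'$, so that $P\ge P(0.94)>0$ throughout. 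This tight quartic estimate is the main obstacle: the formulas for $\gamma(R_0)$ and $\beta(R_0)$ were essentially reverse-engineered to balance the competing constraints \eqref{D-5} and \eqref{B-1} and to make the sufficient coupling strength $1.6D(\Omega)/R_0^2$ as small as this scheme allows, which is exactly why $P(0.94)$ is so small and why the monotonicity of $P$ must be handled with care rather than by crude bounds; all the remaining checks are routine bookkeeping.
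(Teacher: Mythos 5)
Your proposal is correct and follows essentially the same route as the paper's Appendix A: substitute the explicit $\gamma(R_0)$, $\beta(R_0)$, reduce each of the five conditions (a)–(e) to a one-variable inequality in $R_0$ on the two subintervals split at $0.94$, and settle each by monotonicity plus an endpoint evaluation (your key numbers — equality in $\eqref{D-5}_1$ for $R_0>0.94$, the value $\approx 0.644$ versus $0.625$, the comparison $1/2.56<0.384/0.8836$, and $P(0.94)\approx 0.08$ — all match the paper's computations). The only cosmetic differences are that your $P$ is cubic rather than quartic and its monotonicity follows immediately from being a product of positive decreasing factors minus $R_0$ (no need to bound $P'$), and the two monotonicity facts you assert without detail (the increase of $f\bigl(\cos^{-1}\tfrac{1-\gamma}{\gamma}\bigr)$ in $\gamma$, and the lower-regime check of \eqref{additional}, which the paper settles with a logarithmic-derivative computation yielding the ratio $0.857$ at $R_0=0.94$) are both true and verifiable within your framework.
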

\begin{proof}
Since the proof is long and technical, we postpone it to Appendix A. 
\end{proof}
\begin{remark}
The above specification of $\gamma$ and $\beta$ were chosen to mimic the numerically best possible $\gamma(R_0)$ and $\beta(R_0)$ which minimize the required coupling strength $\kappa$. Numerical calculations suggest that the above constant 1.6 could be improved to 1.526.
\end{remark}

\subsection{Step D (Harvesting all estimates)}  Now Lemmas \ref{L5.1}, \ref{L5.2}, and \ref{L5.3} complete the proof of Theorem \ref{T3.2}. Suppose that  the initial data and coupling strength satisfy 
\[R_0 > 0\quad \mbox{and} \quad \kappa>1.6\frac{D(\Omega)}{R_0^2}. \]
By Lemma \ref{L5.1}, there exists a time $t_0\ge 0$ such that
\[
R(t_0)\geq R_0 \quad \mbox{and}  \quad \Delta(t_0) \leq \frac{1}{4} \Big( \frac{D(\Omega)}{\kappa R_0} \Big)^2. 
\]
Choose $\gamma(R_0)$ and $\beta(R_0)$ as in Lemma \ref{L5.3}. By Lemma \ref{L5.3}, $\gamma(R_0)$ and $\beta(R_0)$ satisfy \eqref{D-5}, and so, by Lemma \ref{L5.2}, $\Theta(t_0)$ has a $\gamma(R_0)$-ensemble of arclength $\le 2\beta(R_0)=:\ell$. But then, again by Lemma \ref{L5.3}, $\gamma(R_0)$ and $\ell$ satisfy \eqref{B-1} and \eqref{additional}. Hence, we conclude from Corollary \ref{C3.1} that:
\begin{enumerate}
\item  $\Theta(t)$ eventually has a stable $\gamma(R_0)$-ensemble of arclength $\le \frac{3\pi}{4(2\gamma(R_0)-1)}\frac{D(\Omega)}{\kappa}$;
\item complete phase-locking for $\Theta(t)$ occurs;
\item the aforementioned stable $\gamma(R_0)$-ensemble becomes well ordered in accordance with the natural frequencies.
\end{enumerate}


\section{Instability of dispersed states}\label{sec:6}
\setcounter{equation}{0}
In this section, we complete the discussion of subsection \ref{sec:3.3} by proving Theorem \ref{T3.3}. The idea is that the set of states with sufficiently small order parameters cannot be positively invariant under the Kuramoto flow \eqref{Ku} on $\bbt^N$.  \newline

For any $\varepsilon>0$ and $N$, we define a set
\[ {\mathcal D}(N, \varepsilon) := \Big \{\Theta \in \bbt^N:~
R(\Theta)<\frac{1}{\sqrt{N}}-\varepsilon \Big \}. \]
In subsection \ref{sec:6.1}, we will show that  $ {\mathcal D}(N, \varepsilon)$ is almost surely not positively invariant under the Kuramoto flow \eqref{Ku} with $\kappa > 0$, i.e., if the flow starts at a generic point in $ {\mathcal D}(N, \varepsilon)$, it cannot stay inside the region for  all time, regardless of the magnitude of the natural frequencies. In subsection \ref{sec:6.2}, as an application of this result, we prove Theorem \ref{T3.3} which yields a nontrivial upper bound for the pathwise critical coupling strength which is uniform with respect to initial data.

\subsection{The set ${\mathcal D}(N, \varepsilon)$ is not positively invariant}\label{sec:6.1} First, we begin with a heuristic argument for the fact that the set ${\mathcal D}(N, \varepsilon)$ is not positively invariant. For a given natural frequency vector $\Omega$,  consider the following two cases: \newline
\begin{itemize}
\item Case A ($\kappa \gg D(\Omega)$): According to numerical simulations, the flow tends to a phase-locked state contained in the interior of a quarter circle, and so the order parameter must eventually exceed $\cos \frac{\pi}{4}=\frac{1}{\sqrt{2}}\ge \frac{1}{\sqrt{N}}$.

\vspace{0.2cm}

\item Case B ($\kappa \ll D(\Omega)$, $N \gg 1$): In this case, the nonlinear coupling will contribute little to the dynamics of \eqref{Ku}, so we may approximate \eqref{Ku} as
\[
\dot{\theta}_i\approx\nu_i.
\]
Thus, the phases will tend to be randomly distributed over the unit circle and thus the Kuramoto order parameter will attain the temporal average
\begin{align*}
\begin{aligned}
& \langle R^2\rangle \stackrel{\eqref{order-rel-3}}{=}\frac{1}{N^2} \sum_{i,j} \langle\cos(\theta_i-\theta_j)\rangle=\frac{1}{N^2} \left(N+\sum_{i\neq j} \langle\cos(\theta_i-\theta_j)\rangle\right) \\
& \hspace{1cm} =\frac{1}{N^2} \left(N+\sum_{i\neq j} 0\right)=\frac{1}{N},
\end{aligned}
\end{align*}
where $\langle \cdot \rangle$ denotes the temporal average:
\[
    \lim_{T\rightarrow\infty}\frac{1}{T}\int_0^T R^2(t)dt=\frac{1}{N}.
\]
Thus, for any $\varepsilon>0$, such a solution cannot stay in the region ${\mathcal D}(N, \varepsilon)$ for all times $t\ge 0$.
\end{itemize}

Hence, it is reasonable to expect that given $\kappa >0,~\Omega=(\nu_1,\cdots,\nu
_N)$ and any $\varepsilon>0$, a generic solution to \eqref{Ku} cannot stay in the region ${\mathcal D}(N, \varepsilon)$ for all $t\ge 0$. Now, we will prove this heuristic argument rigorously using the divergence of the flow \eqref{Ku}. First, note that the Kuramoto model gives the integral curves to the following vector field:
\begin{equation}\label{vf-1}
    F: \mathbb{T}^N\rightarrow T\mathbb{T}^N,\quad F=(F_1,\cdots,F_N),
\end{equation}
where
\begin{equation}\label{vf-2}
    F_i(\Theta)=\nu_i+\frac{\kappa}{N}\sum_{j=1}^{N} \sin (\theta_j-\theta_i).
\end{equation}
Let us denote the flow of this vector field by $\Phi_t$, i.e., $\Phi_t(\Theta^0)=\Theta(t)$ is the solution to \eqref{Ku} with initial data $\Theta^0$. Also, for any subset $U\subset \bbt^N$ we denote
\[
\Phi_t(U)=\{\Phi_t(\Theta):\Theta\in U\}.
\]
Finally, we endow $\bbt^N$ with the standard Lebesgue measure $m$, normalized so that $m(\bbt^N)=(2\pi)^N$. Now the divergence of the vector field $F$ is
\begin{equation}\label{div-0}
    \nabla \cdot F=\sum_{i=1}^{N} \frac{\partial F_i}{\partial \theta_i}=-\frac{\kappa}{N}\sum_{1\leq i, j \leq N, i\neq j} \cos(\theta_j-\theta_i).
\end{equation}
On the other hand, it follows from \eqref{order-rel-3} that we also have
\begin{equation}\label{div-1}
    R^2=\frac{1}{N^2}\sum_{i,j= 1}^{N}\cos(\theta_j-\theta_i)=\frac{1}{N^2}\left(N+\sum_{1\leq i, j \leq N,i\neq j}\cos(\theta_j-\theta_i)\right).
\end{equation}
Thus, it follows from \eqref{div-0} and \eqref{div-1} that we can calculate the divergence as
\begin{equation}\label{div}
    \nabla \cdot F= \kappa (1-NR^2).
\end{equation}
Note that the divergence is independent of the choice of natural frequencies $\nu_i$, and depends only on $\kappa$, $N$ and $R$. 

With \eqref{div}, we can immediately show the instability of phase-locked states in ${\mathcal D}(N, \varepsilon)$. 
\begin{proposition} \label{P6.1}
The following two assertions hold. 
\begin{enumerate}
\item
If $\kappa>0$, then any phase-locked state of the Kuramoto model with order parameter less than $\frac{1}{\sqrt{N}}$ is unstable.
\item
If $\kappa<0$, then any phase-locked state of the Kuramoto model with order parameter greater than $\frac{1}{\sqrt{N}}$ is unstable.
\end{enumerate}
\end{proposition}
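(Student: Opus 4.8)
The plan is to read off instability directly from the divergence identity \eqref{div}: at an equilibrium the trace of the linearization of the vector field equals the divergence, and a linearization whose trace is positive must possess an eigenvalue with positive real part, which forces the equilibrium to be unstable.

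First I would note that, under the Galilean normalization \eqref{galilean}--\eqref{zero-nat} (which we have assumed throughout), a phase-locked state $\Theta^*$ is a genuine equilibrium of the vector field $F=(F_1,\dots,F_N)$ in \eqref{vf-1}--\eqref{vf-2}: since all relative phase differences are constant, all the $\dot\theta_i$ coincide, and summing \eqref{Ku} together with $\sum_i\nu_i=0$ shows this common velocity vanishes, so $F(\Theta^*)=0$. Hence it is legitimate to consider the Jacobian $DF(\Theta^*)\in\bbr^{N\times N}$, and by definition
\[
\operatorname{tr} DF(\Theta^*)=\sum_{i=1}^N \frac{\partial F_i}{\partial\theta_i}(\Theta^*)=(\nabla\cdot F)(\Theta^*)\stackrel{\eqref{div}}{=}\kappa\bigl(1-NR(\Theta^*)^2\bigr).
\]

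Next I would conclude. For assertion (1), if $\kappa>0$ and $R(\Theta^*)<N^{-1/2}$ then $1-NR(\Theta^*)^2>0$, so the trace is strictly positive; for assertion (2), if $\kappa<0$ and $R(\Theta^*)>N^{-1/2}$ then both factors are negative and the product is again strictly positive. In either case $\operatorname{tr} DF(\Theta^*)>0$. Since the trace of a real matrix equals the sum of its eigenvalues (with multiplicity) and is real, at least one eigenvalue of $DF(\Theta^*)$ has strictly positive real part; by the standard instability criterion for equilibria of ODEs (an eigenvalue of the linearization in the open right half-plane implies Lyapunov instability), $\Theta^*$ is unstable.

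The one place that needs a sentence of care is the zero eigenvalue of $DF(\Theta^*)$ coming from the rotational symmetry $\theta_i\mapsto\theta_i+c$, which makes $\Theta^*$ non-isolated (it lies on a circle of equilibria), so it cannot be hyperbolic. This is harmless: a zero eigenvalue contributes nothing to the trace, so the remaining eigenvalues still sum to a strictly positive real number, and hence one of them lies in the open right half-plane, with eigendirection transverse to the symmetry orbit. I expect this bookkeeping — together with stating precisely which notion of "unstable" is meant for the relative equilibrium — to be the only subtlety; everything else is immediate from \eqref{div}.
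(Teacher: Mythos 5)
Your proposal is correct and follows exactly the paper's argument: compute $\operatorname{tr} DF(\Theta^*)=(\nabla\cdot F)(\Theta^*)=\kappa(1-NR^2)$ from \eqref{div}, observe it is positive under the stated hypotheses, and invoke the existence of an eigenvalue with positive real part to conclude instability. Your added remarks on why a phase-locked state is a genuine equilibrium after the normalization \eqref{galilean}--\eqref{zero-nat} and on the zero eigenvalue from rotational symmetry are refinements the paper leaves implicit, but the core of the proof is the same.
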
 
\begin{proof}
If $\kappa>0$ and $\Theta$ is a phase-locked state with order parameter less than $\frac{1}{\sqrt{N}}$, then $\nabla\cdot F>0$ at $\Theta$. Thus if we linearize $F$ at $\Theta$, the Jacobian matrix has a positive trace and thus has a complex eigenvalue with positive real part. Hence, this equilibrium is unstable. The case for $\kappa<0$ can be treated similarly.
\end{proof}
More generally, we can rigorously prove the heuristic argument in the beginning of this subsection.
\begin{proposition}\label{nodispersed}
Let $\kappa$, $N$ and $\Omega=(\nu_1,\cdots,\nu_N)$ be fixed, and consider the flow $\Phi_t$ generated by \eqref{vf-1}-\eqref{vf-2}. Then, the following assertions hold:
\begin{enumerate}
\item If $\kappa>0$, let $R_*$ be any real constant so that $R_*<\frac{1}{\sqrt{N}}$. Then the Borel set
\[
U_{R_*}^\infty=\{ \Theta^0\in \bbt^N: R(\Phi_t(\Theta^0))<R_*\mbox{ for all } t\ge 0\}
\]
has measure zero.

\vspace{0.2cm}

\item If $\kappa <0$, let $R_*$ be any real constant so that $R_*>\frac{1}{\sqrt{N}}$. Then the Borel set
\[
V_{R_*}^\infty=\{ \Theta^0\in \bbt^N: R(\Phi_t(\Theta^0))>R_*\mbox{ for all } t\ge 0\}
\]
has measure zero.
\end{enumerate}
\end{proposition}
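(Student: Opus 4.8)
The plan is to exploit the divergence identity \eqref{div}, $\nabla\cdot F=\kappa(1-NR^2)$, together with Liouville's theorem for the flow $\Phi_t$. For $\kappa>0$ and $R_*<\frac{1}{\sqrt{N}}$, set $c:=\kappa(1-NR_*^2)>0$; on the open region $\{\Theta:R(\Theta)<R_*\}$ one has $\nabla\cdot F\ge c$, so $\Phi_t$ expands phase-space volume there at a uniform exponential rate. The key observation is that $U_{R_*}^\infty$ is forward-invariant under $\Phi_t$ and has finite measure, and a set with both properties cannot be uniformly volume-expanded unless it is Lebesgue-null. The case $\kappa<0$ will be symmetric.

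In more detail, first I would check that $U_{R_*}^\infty$ is Borel: the map $(t,\Theta^0)\mapsto R(\Phi_t(\Theta^0))$ is continuous, so $U_{R_*}^T:=\{\Theta^0:R(\Phi_t(\Theta^0))<R_*\text{ for all }t\in[0,T]\}$ is open for each $T$ (its complement is the image of a closed set under projection along the compact fiber $[0,T]$), whence $U_{R_*}^\infty=\bigcap_{n\in\bbn}U_{R_*}^n$ is a $G_\delta$. Next, forward-invariance is immediate from the semigroup property: if $R(\Phi_t(\Theta^0))<R_*$ for all $t\ge 0$, then $R(\Phi_t(\Phi_s(\Theta^0)))=R(\Phi_{t+s}(\Theta^0))<R_*$ for all $t\ge 0$, so $\Phi_s(\Theta^0)\in U_{R_*}^\infty$; in particular $\Phi_1(U_{R_*}^\infty)\subseteq U_{R_*}^\infty$. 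Then I would apply Liouville's formula: since $F$ is smooth, $\Phi_1$ is a diffeomorphism of $\bbt^N$ whose Jacobian satisfies $\log\det D\Phi_1(\Theta^0)=\int_0^1(\nabla\cdot F)(\Phi_s(\Theta^0))\,ds$; for $\Theta^0\in U_{R_*}^\infty$ each point $\Phi_s(\Theta^0)$ lies in $\{R<R_*\}$, so the integrand is $\ge c$ by \eqref{div} and $\det D\Phi_1\ge e^c$ on $U_{R_*}^\infty$. Finally, the change-of-variables formula gives
\[
m(U_{R_*}^\infty)\;\ge\;m\big(\Phi_1(U_{R_*}^\infty)\big)\;=\;\int_{U_{R_*}^\infty}\det D\Phi_1\,dm\;\ge\;e^{c}\,m(U_{R_*}^\infty),
\]
and since $e^c>1$ this forces $m(U_{R_*}^\infty)=0$.

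For $\kappa<0$ and $R_*>\frac{1}{\sqrt{N}}$ the same argument works with $c:=\kappa(1-NR_*^2)=|\kappa|(NR_*^2-1)>0$: on $\{R>R_*\}$ one has $1-NR^2\le 1-NR_*^2<0$, so multiplying by $\kappa<0$ gives $\nabla\cdot F\ge c$ there, after which the volume-expansion contradiction is identical. I do not expect a serious obstacle here — the proof is short once \eqref{div} is in hand. The only points requiring a little care are the measurability of $U_{R_*}^\infty$ and the clean statement of Liouville's formula for $\det D\Phi_1$; the genuine crux, which I would emphasize rather than skip, is the sign of $\nabla\cdot F$ on the relevant region, which is exactly what \eqref{div} delivers and is what makes that region uniformly volume-expanding.
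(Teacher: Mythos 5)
Your proof is correct and rests on exactly the same mechanism as the paper's: the divergence identity \eqref{div} and the resulting uniform volume expansion of the flow on the open region $\{R<R_*\}$ (resp.\ $\{R>R_*\}$), the only difference being the endgame — the paper bounds $m(U_{R_*}^T)\le (2\pi)^N\exp[-\kappa(1-NR_*^2)T]$ for each finite $T$ and lets $T\to\infty$, whereas you use forward-invariance of $U_{R_*}^\infty$ under $\Phi_1$ together with the change-of-variables formula to get $m(U_{R_*}^\infty)\ge e^{c}\,m(U_{R_*}^\infty)$ directly. Both routes are valid; yours is marginally slicker, and your measurability argument (the complement of $U_{R_*}^T$ as a projection along a compact fiber) supplies a detail the paper leaves implicit.
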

\begin{proof}
\noindent (i)~Let $\kappa >0$ and $R_*<\frac{1}{\sqrt{N}}$. If we define the open set
\[
U_{R_*}^0 :=\{\Theta^0\in\bbt^N: R(\Theta^0)<R_*\},
\]
then by \eqref{div} we have
\begin{equation}\label{posdiv}
(\nabla\cdot F)(\Theta)> \kappa (1-NR_*^2)>0,\quad\forall~\Theta\in U_{R_*}^0.
\end{equation}
For each $T> 0$, we define the open set
\[
U_{R_*}^T:=\{\Theta^0\in\bbt^N: R(\Phi_t(\Theta^0))<R_*\mbox{ for all }0\le t \le T\}.
\]
Then by definition we have
\[
\Phi_t(U_{R_*}^T)\subset U_{R_*}^0,\quad \forall~0\le t\le T.
\]
Hence, by \eqref{posdiv} and the fact that the divergence of a flow determines the rate of change of volume, we have
\[
\left.\frac{d}{dt}\right|^-m(\Phi_t(U_{R_*}^T))\ge \kappa(1-NR_*^2)\cdot m(\Phi_t(U_{R_*}^T)),\quad \forall~0\le t\le T,
\]
where $\left.\frac{d}{dt}\right|^-$ is the lower Dini derivative. In particular, we have
\[
m(\Phi_T(U_{R_*}^T))\ge \exp[\kappa(1-NR_*^2)T]m(\Phi_0(U_{R_*}^T))=\exp[\kappa(1-NR_*^2)T]m(U_{R_*}^T),
\]
or
\begin{align*}
\begin{aligned}
m(U_{R_*}^T) &\le m(\Phi_T(U_{R_*}^T))\exp[-\kappa(1-NR_*^2)T] \\
&\le m(\bbt^N)\exp[-\kappa(1-NR_*^2)T] = (2\pi)^N\exp[-\kappa(1-NR_*^2)T].
\end{aligned}
\end{align*}
Now
\[
U_{R_*}^\infty :=\mathop{\bigcap}_{T\in\bbn}U_{R_*}^T
\]
is a Borel set, and its measure satisfies
\[
m(U_{R_*}^\infty)\le\lim_{T\in\bbn,T\rightarrow\infty}m(U_{R_*}^T)\le\lim_{T\in\bbn,T\rightarrow\infty}(2\pi)^N\exp[-\kappa(1-NR_*^2)T]=0,
\]
as claimed. \newline

\noindent (ii)~The case $\kappa<0$ can be treated similarly as (i). Hence, we omit its details.
\end{proof}

\subsection{Proof of Theorem \ref{T3.3}}\label{sec:6.2} In this subsection, we provide a proof of Theorem \ref{T3.3} to obtain the finiteness of the pathwise critical coupling strength for fixed finite $N$. \newline

We choose a sequence $\{R_n\}_{n=1}^\infty$ of real numbers in $(0,\frac{1}{\sqrt{N}})$ sufficiently close to $\frac{1}{\sqrt{N}}$ such that
\[ \kappa >1.6\frac{D(\Omega)}{R_n^2} \quad \forall n\ge 1,\quad\mbox{and}\quad \lim_{n\rightarrow\infty}R_n=\frac{1}{\sqrt{N}}.
\]
Since $\kappa>0$, Proposition \ref{nodispersed} tells us that for every $n\ge 1$ the set
\[
U_{R_n}^\infty=\{ \Theta^0\in \bbt^N: R(\Phi_t(\Theta^0))<R_n\mbox{ for all } t\ge 0\}
\]
has measure zero and thus the union
\[
U^\infty=\mathop{\bigcup}_{n=1}^\infty U_{R_n}^\infty
\]
has measure zero as well. So, for almost all initial data $\Theta^0\in \bbt^N\backslash U^\infty$, for each $n\ge 1$ there exists a finite time $t_n\ge 0$ such that
\[
R(\Phi_{t_n}(\Theta^0))\ge R_n,
\]
and we apply Theorem \ref{T3.2} at this time $t=t_n$ to conclude the following assertions:
\begin{enumerate}
\item Complete phase-locking occurs.

\vspace{0.2cm}

\item Let
\[
\gamma(R_n)=0.5+\frac{0.35}{0.94}R_n\quad(\because R_n<\frac{1}{\sqrt{N}}<0.94).
\]
Then $\Theta(t)$ has a stable $\gamma(R_n)$-ensemble $\mathcal{A}_n$ of arclength $\le \frac{3\pi}{4(2\gamma(R_n)-1)}\frac{D(\Omega)}{\kappa}$; more precisely, after a suitable modulo $2\pi$-shift, we have
\[ \limsup_{t\rightarrow\infty}D(\Theta_\mathcal{A})\le \phi_1(\gamma(R_n),\kappa,D(\Omega)). \]

\vspace{0.2cm}

\item 
The ensemble $\mathcal{A}_n$ becomes well-ordered in accordance with the natural frequencies: if $\theta_i$ and $\theta_j$ belong to that ensemble and $\nu_i\ge \nu_j$, then
\begin{equation}\label{F-30}
\frac{\nu_i-\nu_j}{\kappa}\le \liminf_{t\rightarrow\infty}[\theta_i(t)-\theta_j(t)]\le \limsup_{t\rightarrow\infty}[\theta_i(t)-\theta_j(t)]
\le c_n\frac{\nu_i-\nu_j}{\kappa},
\end{equation}
where the constant $c_n$ is defined as
\[
c_n :=\frac{\pi}{2\sqrt{2}(\gamma(R_n)\cos\phi_1(\gamma(R_n), \kappa, D(\Omega))-(1-\gamma(R_n)))}.
\]
\end{enumerate} 
Now, since there are only finitely many subsets of $\mathcal{N}$, we may assume that the ensemble $\mathcal{A}_n=\mathcal{A}$ is fixed by possibly restricting to a subsequence of $\{R_n\}$ . But since
\[
\phi_1 \Big(\gamma(\frac{1}{\sqrt{N}}),\kappa,D(\Omega) \Big)<\frac{3\pi}{4(2\gamma_N-1)}\frac{D(\Omega)}{\kappa},
\]
and $\phi_1$ depends continuously on its first argument, we may find some natural number $n_0$ for which
\[
\phi_1(\gamma(R_{n_0}),\kappa,D(\Omega))<\frac{3\pi}{4(2\gamma_N-1)}\frac{D(\Omega)}{\kappa}
\]
and hence
\[
\limsup_{t\rightarrow\infty}D(\Theta_\mathcal{A})<\frac{3\pi}{4(2\gamma_N-1)}\frac{D(\Omega)}{\kappa}.
\]
Finally, we take the limit $n\rightarrow\infty$ in \eqref{F-30} to prove the final statement.


\section{Conclusion}\label{sec:7}
\setcounter{equation}{0}
In this paper, we introduced a pathwise critical coupling strength exhibiting a sharp transition from partial phase-locking to complete phase locking from a given initial phase configuration and provided nontrivial upper bounds for the pathwise critical coupling strength. A common paradigm, based on numerical simulations, for understanding the process of asymptotic phase-locking in the Kuramoto model  in the case $\kappa >D(\Omega)$ is as follows:

\begin{center}
\begin{tikzpicture}[node distance = 2cm, auto]

    \node [block] (gen) {generic initial data $\Theta^0$};
    \node [block, below of=gen] (half) {configuration confined in half-circle};
    \node [block, below of =half] (apl) {asymptotic phase-locking};
    \path [line] (gen) -- node {unknown process}(half);
    \path [line] (half) -- node {Theorem \ref{T3.1} } (apl);
\end{tikzpicture}
\end{center}
where the arrows denote the flow of time $t\ge 0$. That is, the flow \eqref{Ku} leads generic initial data into a half-circle by some unknown process, and then asymptotic phase-locking occurs via a well-understood process. However, so far there has been little progress towards understanding the first arrow. In this paper, we suggest that the following picture might be more amenable to mathematical analysis:
\begin{center}
\begin{tikzpicture}[node distance = 2cm, auto]
    \node [block] (gen) {generic initial data $\Theta^0$};
    \node [block, below of=gen] (div) {configuration with $R\ge \frac{1}{\sqrt{N}}-\varepsilon$};
    \node [block, below of=div] (R) {configuration with $R\ge c>0$};
    \node [block, below of=R] (clus) {formation of clusters};
    \node [block, below of =clus] (apl) {asymptotic phase-locking};

    \path [line] (gen) -- node{Section \ref{sec:6}} (div);
    \path [line] (div) -- node{unknown process} (R);
    \path [line] (R) -- node {Lemma \ref{L5.1}} (clus);
    \path [line]  (clus) -- node {Theorem \ref{T3.1}}(apl);
\end{tikzpicture}
\end{center}
That is, Section \ref{sec:6} tells us that we may assume the initial order parameter is at least $\frac{1}{\sqrt{N}}-\varepsilon$ (the first arrow), and once we have some lower bound on the order parameter (second arrow), we apply Theorem \ref{T3.3} (the last two arrows). In the paper, we have used $c=\max\{R_0,\frac{1}{\sqrt{N}}-\varepsilon\}$, which gave the bound
\[ \kappa_{pc}(\Theta^0,\Omega, N)\le 1.6 \min \Big \{\frac{1}{R_0^2},N \Big \}D(\Omega).
\]
If we could somehow prove that the order parameter must, with high probability, become eventually greater than some universal constant $c>0$, then we would obtain the bound
\[
\kappa_{pc}(\Theta^0,\Omega,N)\le \frac{1.6}{c^2}D(\Omega)
\]
which would prove Conjecture \ref{MainConj}. This is not completely unreasonable: the first arrow (that is, Section \ref{sec:6}) is valid even for any small positive coupling $\kappa$, so the assumptions of a stronger coupling $\kappa$ could improve the analysis of Section \ref{sec:6}. One method could be to replace the standard Lebesgue volume element on $\bbt^N$ by some other nontrivial one. It would be interesting to see this argument completed, and hence prove Conjecture \ref{MainConj}.

Another possible direction of research is towards the resulting phase-locked state. We have proved that a majority $\gamma$-ensemble must synchronize as described in Theorem \ref{T3.1}, but the quotient $\gamma$ has the natural limitation $\gamma\le \frac{1+R_0}{2}$ (as given in Remark \ref{limitations}). It would be nice to prove that the $\gamma$ approaches 1 as $\kappa$ is made large, which is indeed the case verified by numerical simulations, and it would be much better to prove that $\gamma$ actually becomes 1 for $\kappa$ above a certain threshold, such as that given in Conjecture \ref{AuxConj}. We leave such improvements toward Conjectures \ref{MainConj} and \ref{AuxConj} to be addressed in future works.

\appendix

\newpage

\appendix
\section{Proof of Lemma \ref{L5.3}}
\setcounter{equation}{0}
In this appendix, we provide a proof of Lemma \ref{L5.3}. The five conditions we must check are as follows.
\begin{align}
\begin{aligned} \label{Ap-1}
& (a)~\gamma \in\left(\frac{1}{2},1\right], \\
& (b)~ \beta\in\left(0,\cos^{-1}\Big[\frac{1}{\gamma}-1\Big]\right), \\
& (c)~\kappa >\frac{D(\Omega)}{2\sin\beta(\gamma\cos\beta-(1-\gamma))}, \\ 
& (d)~\mbox{either}\quad R_0\ge \gamma+(1-\gamma)\cos\beta\quad\mbox{or}\quad 2\gamma+\frac{D(\Omega)^2}{4\kappa^2R_0^2}\frac{1}{1-\cos\beta}\le 1+R_0,\\
& (e)~\frac{D(\Omega)}{\kappa}< \frac{(2\gamma-1)^{3/2}}{\sqrt{2\gamma}}\frac{2-\gamma}{\sqrt{\gamma/2}+(1-\gamma)}.
\end{aligned}
\end{align}
Recall our choice for $\gamma$ and $\beta$:
\[
\gamma(R_0)=
\begin{cases}
0.5+\frac{0.35}{0.94}R_0& 0<R_0\le 0.94,\\
1-\frac{5}{2}(1-R_0)& 0.94<R_0\le 1,
\end{cases}
\quad \mbox{and} \quad
\cos\beta(R_0)=
\begin{cases}
1-\frac{0.4}{0.94}R_0&0<R_0\le 0.94, \\
0.6&0.94<R_0\le 1.
\end{cases}
\]

\noindent We verify conditions \eqref{Ap-1}  for the two cases separately
\[  R_0\le 0.94 \quad \mbox{and} \quad R_0> 0.94. \]

\subsection{Case A$(R_0\le 0.94)$} We verify the conditions in \eqref{Ap-1} one by one. \newline

\noindent $\bullet$~Step 1 (Verification of relation \eqref{Ap-1}-(a)): Since $\gamma(R_0)$ is a linear function of $R_0$, we have
\[
0.5=\gamma(0)< \gamma(R_0)\le \gamma(0.94)=0.85.
\]

\vspace{0.2cm}

\noindent $\bullet$~Step 2 (Verification of relation \eqref{Ap-1}-(b)): First, note the equivalence:
\[
\mbox{\eqref{Ap-1}-(b)} \quad\Longleftrightarrow \quad \cos \beta(R_0)>\frac{1}{\gamma(R_0)}-1 \quad \Longleftrightarrow \quad \gamma(R_0)(1+\cos \beta(R_0))-1>0,
\]
On the other hand, we use the choices for $\gamma(R_0)$ and $\cos\beta(R_0)$ to see
\begin{align*}
\begin{aligned}
\gamma(R_0)(1+\cos \beta(R_0))-1&=\left(0.5+\frac{0.35}{0.94}R_0\right)\left(2-\frac{0.4}{0.94}R_0\right)-1=\frac{0.5}{0.94}R_0-\frac{0.14}{0.94^2}R_0^2\\
&=\frac{0.36}{0.94}R_0+\frac{0.14}{0.94}R_0\left(1-\frac{R_0}{0.94}\right)>0,
\end{aligned}
\end{align*}
where the final statement holds because of $0<R_0\le 0.94$.

\vspace{0.2cm}

\noindent $\bullet$~Step 3 (Verification of relation \eqref{Ap-1}-(c)): Note that
\begin{align*}
\begin{aligned}
\eqref{Ap-1}-(c)\quad\Longleftarrow&\quad 1.6\frac{D(\Omega)}{R_0^2}\ge \frac{D(\Omega)}{2\sin\beta(R_0)(\gamma(R_0)\cos\beta(R_0)-(1-\gamma(R_0)))}\\
\Longleftrightarrow&\quad 1.6\frac{1}{R_0^2}\ge \frac{1}{2\sqrt{1-\cos\beta(R_0)}\sqrt{1+\cos\beta(R_0)}(\gamma(R_0)(1+\cos\beta(R_0))-1)}\\
\Longleftrightarrow&\quad 1.6\frac{1}{R_0^2}\ge \frac{1}{2\sqrt{\frac{0.4}{0.94}R_0}\sqrt{2-\frac{0.4}{0.94}R_0}(\frac{0.5}{0.94}R_0-\frac{0.14}{0.94^2}R_0^2)}\\
\Longleftrightarrow&\quad 3.2\frac{1}{R_0^2}\sqrt{\frac{0.4}{0.94}R_0}\sqrt{2-\frac{0.4}{0.94}R_0}(\frac{0.5}{0.94}R_0-\frac{0.14}{0.94^2}R_0^2)\ge 1\\
\Longleftrightarrow&\quad 3.2\sqrt{\frac{0.4}{0.94}}\sqrt{2-\frac{0.4}{0.94}R_0}\left(\frac{0.5}{0.94\sqrt{R_0}}-\frac{0.14}{0.94^2}\sqrt{R_0}\right)\ge 1.
\end{aligned}
\end{align*}
In the final inequality, each and every term on the LHS is a  positive decreasing function of $R_0$ on $(0,0.94]$, so the LHS as a whole is a decreasing function of $R_0$ on $(0,0.94]$. At $R_0=0.94$, the LHS is 1.0430..., which is larger than 1. We thus conclude that \eqref{Ap-1}-(c) is true.

\vspace{0.2cm}

\noindent $\bullet$~Step 4 (Verification of relation \eqref{Ap-1}-(d)):  We use the second condition of \eqref{Ap-1}-(d).
\begin{align*}
\eqref{Ap-1}-(d)\quad \Longleftarrow &\quad 2\gamma(R_0)+\frac{D(\Omega)^2}{4\kappa^2R_0^2}\frac{1}{1-\cos\beta(R_0)}\le 1+R_0\\
\Longleftarrow &\quad 2\gamma(R_0)+\frac{R_0^2}{4\cdot 1.6^2}\frac{1}{1-\cos\beta(R_0)}\le 1+R_0\\
\Longleftrightarrow &\quad 2\left(0.5+\frac{0.35}{0.94}R_0\right)+\frac{R_0^2}{4\cdot 1.6^2}\frac{1}{\frac{0.4}{0.94}R_0}\le 1+R_0\\
\Longleftrightarrow &\quad \frac{0.7}{0.94}R_0+\frac{0.94}{4\cdot 1.6^2\cdot 0.4}R_0\le R_0\\
\Longleftrightarrow &\quad \frac{0.7}{0.94}+\frac{0.94}{4\cdot 1.6^2\cdot 0.4}=0.9742...\le 1.
\end{align*}
Hence \eqref{Ap-1}-(d) is true.

\vspace{0.2cm}

\noindent $\bullet$~Step 5 (Verification of relation \eqref{Ap-1}-(e)):  We have
\begin{align*}
\eqref{Ap-1}-(e)\quad \Longleftarrow & \quad\frac{1}{1.6}R_0^2\le \frac{(2\gamma(R_0)-1)^{3/2}}{\sqrt{2\gamma(R_0)}}\frac{2-\gamma(R_0)}{\sqrt{\gamma(R_0)/2}+(1-\gamma(R_0))}\\
\Longleftrightarrow &\quad \frac{1}{1.6}R_0^2\le \frac{\left(\frac{0.7}{0.94}R_0\right)^{3/2}}{\sqrt{1+\frac{0.7}{0.94}R_0}}\frac{1.5-\frac{0.35}{0.94}R_0}{\sqrt{0.25+\frac{0.35}{1.88}R_0}+0.5-\frac{0.35}{0.94}R_0}\\
\Longleftrightarrow &\quad\frac{1}{1.6}\left(\frac{0.94}{0.7}\right)^{3/2}\frac{\sqrt{R_0}\sqrt{1+\frac{0.7}{0.94}R_0}\left(\sqrt{0.25+\frac{0.35}{1.88}R_0}+0.5-\frac{0.35}{0.94}R_0\right)}{1.5-\frac{0.35}{0.94}R_0}\le 1.
\end{align*}
The LHS of the final inequality has logarithmic derivative
\begin{align*}
&\frac{1}{2R_0}+\frac{\frac{0.7}{0.94}}{2(1+\frac{0.7}{0.94}R_0)}+\frac{\frac{\frac{0.35}{1.88}}{2\sqrt{0.25+\frac{0.35}{1.88}R_0}}-\frac{0.35}{0.94}}{\sqrt{0.25+\frac{0.35}{1.88}R_0}+0.5-\frac{0.35}{0.94}R_0}+\frac{\frac{0.35}{0.94}}{1.5-\frac{0.35}{0.94}R_0}\\
& \hspace{0.5cm} \ge \frac{1}{2\cdot 0.94}+0+\frac{\frac{\frac{0.35}{1.88}}{2\sqrt{0.25+\frac{0.35}{1.88}\cdot 0.94}}-\frac{0.35}{0.94}}{\sqrt{0.25+\frac{0.35}{1.88}R_0}+0.5-\frac{0.35}{0.94}R_0}+0\\
& \hspace{0.5cm}= \frac{1}{2\cdot 0.94}-\frac{0.35}{0.94}\cdot \frac{1-\frac{1}{4\sqrt{0.425}}}{\sqrt{0.25+\frac{0.35}{1.88}R_0}+0.5-\frac{0.35}{0.94}R_0}\\
& \hspace{0.5cm}\ge \frac{1}{2\cdot 0.94}-\frac{0.35}{0.94}\cdot \frac{1-\frac{1}{4\sqrt{0.425}}}{\sqrt{0.25+\frac{0.35}{1.88}\cdot 0}+0.5-\frac{0.35}{0.94}\cdot 0.94}\\
& \hspace{0.5cm}=\frac{1}{2\cdot 0.94}-\frac{0.35}{0.94}\cdot \frac{1-\frac{1}{4\sqrt{0.425}}}{0.65}\\
& \hspace{0.5cm}=0.178...>0,
\end{align*}
and thus is an increasing function of $R_0$ on $(0,0.94]$. At $R_0=0.94$, this LHS has value 0.857..., which is less than $1$. Hence \eqref{Ap-1}-(e) is true.

\subsection{Case B $(R_0 > 0.94)$}  Again, we verify each relation one by one.  \newline

\noindent $\bullet$~Step 1 (Verification of relation \eqref{Ap-1}-(a)): Since $\gamma(R_0)$ is a linear function of $R_0$,
\[
0.85=\gamma(0.94)< \gamma(R_0)\le \gamma(1)=1.
\]

\noindent $\bullet$~Step 2 (Verification of relation \eqref{Ap-1}-(b)): Note the equivalence 
\[
\eqref{Ap-1}-(b)\quad \Longleftrightarrow \quad \cos \beta(R_0)>\frac{1}{\gamma(R_0)}-1 \Longleftrightarrow \quad \gamma(R_0)(1+\cos \beta(R_0))-1>0,
\]
and we can calculate
\begin{align*}
\gamma(R_0)(1+\cos \beta(R_0))-1&=\left(1-2.5(1-R_0)\right)\cdot 1.6-1=0.6-4(1-R_0)\\
&>0.6-4\cdot 0.06=0.36>0,
\end{align*}
where we have used $0.94<R_0\le 1$ in the first inequality. \newline

\noindent $\bullet$~Step 3 (Verification of relation \eqref{Ap-1}-(c)): We have 
\begin{align*}
\eqref{Ap-1}-(c)\quad \Longleftarrow&\quad 1.6\frac{D(\Omega)}{R_0^2}\ge \frac{D(\Omega)}{2\sin\beta(R_0)(\gamma(R_0)\cos\beta(R_0)-(1-\gamma(R_0)))}\\
\Longleftrightarrow &\quad1.6\frac{1}{R_0^2}\ge \frac{1}{2\cdot 0.8\cdot(0.6-4(1-R_0))}\\
\Longleftrightarrow &\quad2.56\frac{4R_0-3.4}{R_0^2}\ge 1.
\end{align*}
In the final inequality, the LHS has derivative
\[
2.56\left(-\frac{4}{R_0^2}+\frac{6.8}{R_0^3}\right)>2.56\left(-\frac{4}{0.94^2}+6.8\right)=2.2731...>0,
\]
and thus is an increasing function of $R_0$ on $(0.94,1]$. At $R_0=0.94$, the LHS is 1.0430..., which is larger than 1. We thus conclude that \eqref{Ap-1}-(c) is true. 

\vspace{0.2cm}

\noindent $\bullet$~Step 4 (Verification of relation \eqref{Ap-1}-(d)):  This time, we verify the first statement of \eqref{Ap-1}-(d), which is
\begin{align*}
\eqref{Ap-1}-(d)-(i)\quad \Longleftrightarrow &\quad R_0\ge \gamma(R_0)+(1-\gamma(R_0))\cos\beta(R_0)\\
\Longleftrightarrow &\quad R_0\ge (1-2.5(1-R_0))+2.5(1-R_0)\cdot 0.6\\
\Longleftrightarrow &\quad 0\ge 0.
\end{align*}

\vspace{0.2cm}

\noindent $\bullet$~Step 5 (Verification of relation \eqref{Ap-1}-(e)):  We have
\begin{align*}
\begin{aligned}
\eqref{Ap-1}-(e)\quad \Longleftarrow & \quad\frac{1}{1.6}R_0^2\le \frac{(2\gamma(R_0)-1)^{3/2}}{\sqrt{2\gamma(R_0)}}\frac{2-\gamma(R_0)}{\sqrt{\gamma(R_0)/2}+(1-\gamma(R_0))}\\
\Longleftrightarrow &\quad \frac{1}{1.6}R_0^2\le \frac{\left(5R_0-4\right)^{3/2}}{\sqrt{5R_0-3}}\frac{3.5-2.5R_0}{\sqrt{1.25R_0-0.75}+2.5(1-R_0)}\\
\Longleftrightarrow &\quad \frac{1}{1.6}\frac{R_0^2\sqrt{5R_0-3}(\sqrt{1.25R_0-0.75}+2.5(1-R_0))}{\left(5R_0-4\right)^{3/2}(3.5-2.5R_0)}\le 1.
\end{aligned}
\end{align*}
The LHS of the final inequality has, on $(0.94,1)$, logarithmic derivative
\begin{align*}
\begin{aligned}
&\frac{2}{R_0}+\frac{5}{2(5R_0-3)}+\frac{\frac{1.25}{2\sqrt{1.25R_0-0.75}}-2.5}{\sqrt{1.25R_0-0.75}+2.5(1-R_0)}-\frac{3}{2}\cdot\frac{5}{5R_0-4}+\frac{2.5}{3.5-2.5R_0}\\
&\hspace{0.5cm} \le \frac{2}{0.94}+\frac{5}{2(5\cdot 0.94-3)}+\frac{\frac{1.25}{2\sqrt{1.25\cdot 0.94-0.75}}-2.5}{\sqrt{1.25R_0-0.75}+2.5(1-R_0)}-\frac{3}{2}\cdot\frac{5}{5\cdot 1-4}+\frac{2.5}{3.5-2.5\cdot 1}\\
&\hspace{0.5cm}= \frac{2}{0.94}+\frac{5}{3.4}-\frac{2.5-\frac{1.25}{2\sqrt{0.425}}}{\sqrt{1.25R_0-0.75}+2.5(1-R_0)}-7.5+2.5\\
&\hspace{0.5cm}\le \frac{2}{0.94}+\frac{5}{3.4}-\frac{2.5-\frac{1.25}{2\sqrt{0.425}}}{\sqrt{1.25\cdot 1-0.75}+2.5(1-0.94)}-5\\
&\hspace{0.5cm}\le \frac{2}{0.94}+\frac{5}{3.4}-\frac{2.5-\frac{1.25}{2\sqrt{0.425}}}{\sqrt{0.5}+0.15}-5\\
&\hspace{0.5cm}=-3.20...<0,
\end{aligned}
\end{align*}
and thus is a decreasing function of $R_0$ on $(0.94,1]$. At $R_0=0.94$, this LHS has value 0.857..., which is less than $1$. Hence \eqref{Ap-1}-(e) is true.


\begin{thebibliography}{99}

\bibitem{A-B} J. A. Acebron,  L. L. Bonilla,  C. J. P. P\'{e}rez Vicente, F. Ritort, and R. Spigler:
\textit{The Kuramoto model: A simple paradigm for synchronization phenomena.} Rev. Mod. Phys. {\bf 77} (2005), 137--185.



\bibitem{A-R}  D. Aeyels, and J. Rogge: \textit{Existence of partial entrainment and stability of phase-locking behavior of coupled oscillators.} Prog. Theor. Phys. {\bf 112} (2004), 921--941.

\bibitem{B-B} J. Buck and E. Buck: \textit{Biology of synchronous flashing of fireflies.} Nature  {\bf 211} (1966), 562.

\bibitem{B-C-M} D. Benedetto, E. Caglioti, and U. Montemagno: \textit{On the complete phase synchronization for the Kuramoto model in the mean-field limit.} Commun. Math. Sci. {\bf 13} (2015), 1775--1786.

\bibitem{B-D-P} J. Bronski, L. Deville and M. J. Park: \textit{Fully synchronous solutions and the synchronization phase transition for the finite-$N$ Kuramoto model.} Chaos {\bf 22} (2012), 033133.

\bibitem{B-N-S} L. L. Bonilla, J. C. Neu, and R. Spigler: \textit{Nonlinear stability of incoherence and collective synchronization in a population
of coupled oscillators.} J. Stat. Phys. {\bf 67} (1992), 313--330.

\bibitem{B-S} N. J. Balmforth and R. Sassi: \textit{A shocking display of synchrony.} Physica D {\bf 143} (2000), 21--55.


\bibitem{C-H-J-K} Y.-P. Choi, S.-Y. Ha, S. Jung, and Y. Kim: \textit{Asymptotic formation and orbital stability of phase-locked states for the
Kuramoto model.} Physica D {\bf 241} (2012), 735--754.

\bibitem{C-H-K-K} Y.-P. Choi, S.-Y. Ha, M.-M. Kang, and M. Kang: \textit{ Exponential synchronization of finite-dimensional Kuramoto model
at critical coupling strength.} Commun. Math. Sci. {\bf 11} (2013), 385--401.

\bibitem{C-S} N. Chopra, and M. W. Spong: \textit{On exponential synchronization of Kuramoto oscillators.} IEEE Trans. Automatic Control {\bf 54} (2009),
353--357.

\bibitem{C-D} J. D. Crawford, and K. T. R. Davies: \textit{Synchronization of globally coupled phase oscillators:
singularities and scaling for general couplings.} Physica D {\bf 125} (1999), 1--46.

\bibitem{D-A} F. De Smet, and D. Aeyels: \textit{Partial frequency in finite Kuramoto-Sakaguchi model.} Physica D (2007), 81--89.

\bibitem{D-X} J.-G. Dong, and X. Xue: \textit{Synchronization analysis of Kuramoto oscillators.} Commun. Math. Sci. {\bf 11} (2013), 465--480.

\bibitem {D-B1} F. D\"{o}rfler, and F. Bullo: \textit{On the critical coupling for Kuramoto oscillators.} SIAM. J. Appl. Dyn. Syst. \textbf{10} (2011), 1070--1099.

\bibitem {D-B2} F. D\"{o}rfler, and F. Bullo: \textit{Synchronization in complex networks of phase
oscillators: A survey.}  Automatica {\bf 50} (2014), 1539--1564.

\bibitem{Er} G. B. Ermentrout: \textit{Synchronization in a pool of mutually coupled oscillators with random frequencies.}
J. Math. Biol. {\bf 22} (1985), 1--9.

\bibitem{Ha} S.-Y. Ha: \textit{Lyapunov functional approach and collective dynamics of some interacting many-body systems.} Proceedings of the International Congress of Mathematicians: Seoul 2014. Vol. III, 1123--1140, Kyung Moon Sa, Seoul, 2014.

\bibitem{H-H-K} S.-Y.  Ha, T. Y. Ha, and J.-H. Kim: \textit{On the complete synchronization for the globally coupled Kuramoto model.} Physica D
{\bf 239} (2010), 1692--1700.

\bibitem{H-K-P} S.-Y. Ha, H. K. Kim, and J.-Y. Park: \textit{Remarks on the complete synchronization of Kuramoto oscillators.} Nonlinearity {\bf 28} (2015), 1441--1462.

\bibitem{H-K-R} S.-Y. Ha, H. K. Kim, and S. W. Ryoo: \textit{Emergence of phase-locked states for the Kuramoto model in a large coupling regime.} Commun. Math. Sci. {\bf 4} (2016), 1073-1091.

\bibitem{H-K-R2} S.-Y. Ha, H. K. Kim, and S. W. Ryoo: \textit{On the finiteness of collisions and phase-locked states for the Kuramoto model.} J. Stat. Phys. {\bf 163} (2016), 1394--1424.

\bibitem{H-Ko-R} S.-Y. Ha, D. Ko, and S. W. Ryoo: \textit{On the Relaxation Dynamics of Lohe Oscillators on Some Riemannian Manifolds.} J. Stat. Phys. {\bf 172} (2018), 1427--1478.

\bibitem{H-K-P-Z} S.-Y. Ha, D. Ko, J. Park and X. Zhang: \textit{Collective synchronization of classical and quantum oscillators.} EMS Surveys in Mathematical Sciences {\bf 3} (2016), 209--267.


\bibitem{H-S} S.-Y. Ha and M. Slemrod: \textit{A fast-slow dynamical systems theory for the Kuramoto type phase model.} J. Differential Equations {\bf 251} (2011), 2685--2695.

\bibitem{H-L-X}  S.-Y. Ha,  Z. Li, and X. Xue: \textit{Formation of phase-locked states in a population of locally interacting Kuramoto oscillators.}
J. Differential Equations, {\bf 255} (2013), 3053--3070.

\bibitem{J-M-B} A. Jadbabaie, N.  Motee, and M. Barahona: \textit{On the stability of the Kuramoto model of coupled nonlinear oscillators.}
Proceedings of the American Control Conference (2004), 4296--4301.


\bibitem{Ku1} Y. Kuramoto: \textit{Chemical Oscillations, waves and turbulence.} Springer-Verlag, Berlin. 1984.

\bibitem{Ku2} Y. Kuramoto: \textit{International symposium on mathematical problems in mathematical physics.} Lecture Notes in Theoretical Physics.
{\bf 30} (1975), 420.


\bibitem{M-S1} R. Mirollo, and S. H. Strogatz: \textit{The spectrum of the partially locked state for the Kuramoto model.} J. Nonlinear Science,
{\bf 17} (2007), 309--347.

\bibitem{M-St} R. Mirollo, and S. H. Strogatz: \textit{The spectrum of the locked state for the
                        Kuramoto model of coupled oscillators.}  Physica D {\bf 205} (2005), 249--266.

\bibitem{M-S2} R. Mirollo, and S. H. Strogatz: \textit{Stability of incoherence in a population of coupled oscillators.} J. Stat.
Phys. {\bf 63} (1991), 613--635.



\bibitem{P-R-K} A. Pikovsky, M. Rosenblum, and J. Kurths: \textit{Synchronization: A universal concept in nonlinear sciences.}
Cambridge University Press, Cambridge, 2001.



\bibitem{St} S. H. Strogatz: \textit{From Kuramoto to Crawford: Exploring the onset of synchronization in populations of coupled oscillators.}
Physica D {\bf 143} (2000), 1--20.



\bibitem{V-W} J. L. van Hemmen and W. F. Wreszinski: \textit{Lyapunov function for the Kuramoto model of nonlinearly coupled oscillators.}
J. Stat. Phys. {\bf 72} (1993), 145--166.

\bibitem{V-M0} M. Verwoerd and O. Mason: \textit{A convergence result for the Kuramoto model with all-to-all couplings.} SIAM J. Appl. Dyn. Syst. {\bf 10} (2011), 906--920.

\bibitem{V-M1} M. Verwoerd and O. Mason: \textit{On computing the critical coupling coefficient for the Kuramoto model on a complete bipartite graph.}  SIAM J. Appl.\ Dyn.\ Syst.\ {\bf 8} (2009), 417--453.

\bibitem{V-M2} M. Verwoerd and O. Mason: \textit{Global phase-locking in finite populations of phase-coupled oscillators.} SIAM J. Appl.\ Dyn.\ Syst.\ {\bf 7} (2008), 134--160.

\bibitem{W-S} S. Watanabe and S. H. Strogatz:
\textit{Constants of motion for superconducting Josephson arrays.}
Physica D {\bf 74} (1994), 197--253.

\bibitem{Wi} A. T.  Winfree:
\textit{Biological rhythms and the behavior of populations of
coupled oscillators.} J. Theor.\ Biol.\ {\bf 16} (1967), 15--42.

\end{thebibliography}
\end{document}